\definecolor{orcidlogocol}{HTML}{A6CE39}
\tikzset{
  orcidlogo/.pic={
    \fill[orcidlogocol] svg{M256,128c0,70.7-57.3,128-128,128C57.3,256,0,198.7,0,128C0,57.3,57.3,0,128,0C198.7,0,256,57.3,256,128z};
    \fill[white] svg{M86.3,186.2H70.9V79.1h15.4v48.4V186.2z}
                 svg{M108.9,79.1h41.6c39.6,0,57,28.3,57,53.6c0,27.5-21.5,53.6-56.8,53.6h-41.8V79.1z M124.3,172.4h24.5c34.9,0,42.9-26.5,42.9-39.7c0-21.5-13.7-39.7-43.7-39.7h-23.7V172.4z}
                 svg{M88.7,56.8c0,5.5-4.5,10.1-10.1,10.1c-5.6,0-10.1-4.6-10.1-10.1c0-5.6,4.5-10.1,10.1-10.1C84.2,46.7,88.7,51.3,88.7,56.8z};
  }
}
\newcommand\orcidicon[1]{\href{https://orcid.org/#1}{\mbox{\scalerel*{
\begin{tikzpicture}[yscale=-1,transform shape]
\pic{orcidlogo};
\end{tikzpicture}
}{|}}}}
\newtheorem{theorem}{Theorem}[section]
\newtheorem{lemma}[theorem]{Lemma}
\theoremstyle{definition}
\newtheorem{definition}[theorem]{Definition}
\newtheorem{notation}[theorem]{Notation}
\newtheorem{remark}[theorem]{Remark}
\newcommand{\Vc}{\mathcal{V}}
\newcommand{\PG}{\mathrm{PG}}
\newcommand{\zb}{\mathbf{0}}
\newcommand{\F}{\mathbb{F}}
\newcommand{\Bs}{\mathscr{B}}
\newcommand{\Ps}{\mathscr{P}}
\newcommand{\Ab}{\boldsymbol{A}}
\newcommand{\D}{\boldsymbol{D}}
\newcommand{\Hb}{\boldsymbol{H}}
\newcommand{\hb}{\boldsymbol{h}}
\newcommand{\Ib}{\boldsymbol{I}}
\newcommand{\Mb}{\boldsymbol{M}}
\newcommand{\Ub}{\boldsymbol{U}}
\newcommand{\ub}{\boldsymbol{u}}
\newcommand{\pib}{\boldsymbol{\pi}}
\newcommand{\Wb}{\boldsymbol{W}}
\newcommand{\Hcb}{\boldsymbol{\mathcal{H}}}
\newcommand{\wb}{\boldsymbol{w}}
\newcommand{\xb}{\boldsymbol{x}}
\newcommand{\yb}{\boldsymbol{y}}
\newcommand{\zbl}{\boldsymbol{z}}
\newcommand{\ov}{\overline{v}}
\newcommand{\oc}{\overline{c}}
\newcommand{\ozero}{\overline{0}}
\newcommand{\T}{\text}
\newcommand{\db}{\displaybreak[3]}
\begin{document}

\title{
New upper bounds for binary linear covering codes}
\date{}
\maketitle
\begin{center}
{\sc Alexander A. Davydov \orcidicon{0000-0002-5827-4560}}\\
 {\sc\small Kharkevich Institute for Information Transmission Problems}\\
 {\sc\small Russian Academy of Sciences,
Moscow, 127051, Russian Federation}\\
 \emph{E-mail address:} alexander.davydov121@gmail.com\medskip\\
 {\sc Stefano Marcugini  \orcidicon{0000-0002-7961-0260} and
 Fernanda Pambianco  \orcidicon{0000-0001-5476-5365}}\\
 {\sc\small Department of  Mathematics  and Computer Science,  Perugia University,}\\
 {\sc\small Perugia, 06123, Italy}\\
 \emph{E-mail address:} \{stefano.marcugini, fernanda.pambianco\}@unipg.it
\end{center}

\textbf{Abstract.}
The length function $\ell_2(r,R)$ is the smallest
length of a binary linear code with codimension (redundancy) $r$ and covering radius $R$.
We obtain the following new upper bounds on $\ell_2(r,R)$, which yield a decrease
$\Delta(r,R)$ compared to the best previously known upper bounds:
\begin{align*}
&R=2,\,r=2t,\,r=18,20,\T{ and }r\ge28,\,\ell_2(r,2)\le26\cdot2^{r/2-4}-1;\,\Delta(r,2)=2^{r/2-4}.\\
&R=3,\,r=3t-1,\,r=26\T{ and }r\ge44,\,\ell_2(r,3)\le819\cdot2^{(r-26)/3}-1;\,\Delta(r,3)=2^{(r-23)/3}.\\
&R=4,\,r=4t,\,r=40\T{ and }r\ge68,\,\ell_2(r,4)\le2943\cdot2^{r/4-10}-1;\,\Delta(r,4)=2^{r/4-10}-1.
\end{align*}
To obtain these bounds we construct new infinite code families, using distinct versions of the $q^m$-concatenating constructions of covering codes; some of these versions are proposed in this paper. We also introduce new useful partitions of column sets of parity check matrices of some codes. The asymptotic covering densities $\overline{\mu}(2)\thickapprox1.3203$,
$\overline{\mu}(3)\thickapprox1.3643$, $\overline{\mu}(4)\thickapprox2.8428$, provided by the codes of the new families, are smaller than the known ones.

\textbf{Keywords:} Binary covering codes, the length function, covering radius

\textbf{Mathematics Subject Classification (2010).} 94B65, 94B25, 94B60, 94B05

\section{Introduction, notations, and background}\label{sec1:Intro}
\subsection{Covering codes, covering density, the length function}\label{subsec11:CovCodes}
Let $\F_{q}$ be the \emph{Galois field} with $q$ elements, $\F_q^*=\F_{q}\setminus\{0\}$. Let $\triangleq$ denote the sign ``equality by definition''. The \emph{$n$-dimensional vector space over $\F_{q}$} is denoted by $\F_{q}^{\,n}$. Let $\F_q^{\,m\times n}$ be the set of $m \times n$ matrices with entries from $\F_q$. Let $\#S$ be the cardinality of a set $S$.  Let $t$-set (resp. $t$-subset) be a set (resp. subset) of cardinality~$t$. We use lowercase letters, e.g. $v$, for scalars, and overlined lowercase letters, e.g. $\ov$, for vectors. Let $\ozero$ be the zero vector. Let $tr$ denote transposition. We denote by $\Ib_{r}$ the identity matrix of order $r$ whose leftmost column is $(10\ldots0)^{tr}$.

The \emph{Hamming weight} $\T{wt}(\ov)$ of a vector $\ov\in\F_{q}^{\,n}$ is the number of its non-zero coordinates.
The \emph{Hamming distance} $d(\ov,\ov')$ between two vectors $\ov$, $\ov'$ is the number of coordinates in which they differ.
The \emph{Hamming ball} of radius $\varrho$ with center $\ov\in \F_{q}^{\,n}$ is the set $\{\ov'\,|\,\ov'\in \F_{q}^{\,n},~d(\ov,\ov')\leq \varrho\}$, its volume $V_{\varrho,n,q}$ is given in \eqref{eq11:HamBall}.
\begin{align}
& V_{\varrho,n,q}=\sum_{i=0}^\varrho\binom{n}{i}(q-1)^i;~
 V_{\varrho,n,2}=\sum_{i=0}^\varrho\binom{n}{i}.\label{eq11:HamBall}
\end{align}

A \emph{$q$-ary linear code} $C$ of length $n$ and dimension $k$ is  a $k$-dimensional subspace of $\F_{q}^{\,n}$; it is denoted as $[n,k]_q$ code, and $\#C=q^k$. If $q=2$, we refer to $C$ as a \emph{binary code}. Let $r\triangleq n-k$ be \emph{codimension} (or \emph{redundancy}) of an $[n,k]_q$ code. A value, say ``$\bullet$'', associated with a code $C$ is denoted by $``\bullet(C)"$, but if $C$ is clear from the context or $C$ is not relevant we simply write $``\bullet"$.

A vector $\oc\in C$ is a \emph{codeword} of $C$. The \emph{minimum distance} (or simply \emph{distance}) $d$ of the code $C$ is the minimum Hamming distance between any pair of codewords.
The \emph{distance of a vector $\ov\in\F_{q}^{\,n}$
 from a code $C$} is defined by $d(\ov,C)\triangleq \min\limits_{\oc\in C} d(\ov,\oc)$.

An $(n-k)\times n$ matrix $\Hb\in\F_q^{\,(n-k)\times n}$ is a \emph{parity check matrix} of a linear $[n,k]_q$ code $C$ if, for any codeword $\oc\in C$, we have $\oc\times\Hb^{tr}=\ozero$.

\begin{definition} \label{Def11:CoverRad}
A linear $[n,k]_q$ code $C$ has covering radius $R$  if any of the following equivalent conditions  holds:

\textbf{(i)} The value $R$  is the smallest integer such that the space $\F_{q}^{\,n}$ is covered by
Hamming balls of radius $R$ centered at the codewords of $C$, i.e. $ R(C)\triangleq\max\limits_{\ov\in\F_{q}^{\,n}}d(\ov,C)$.

\textbf{(ii)}
Every vector in $\F_{q}^{\,n-k}$ can be expressed as a linear combination
of at most $R$ columns of an $(n-k)\times n$ parity check matrix of $C$, and $R$ is the smallest value with this property.  For $q=2$, we say ``sum of columns'' instead of ``linear combination.''
\end{definition}

An $[n,k]_q$ code with minimum distance $d$ and covering radius $R$ is denoted $[n,k,d]_qR$,
where $d$ and $R$ can be omitted when they are not relevant or clear from the context.

For an introduction to coding theory, see \cite{libroBierbrauer,HufPless,MWS,Roth} and the references therein.

\begin{definition} \label{Def11:CoverDensReg}
The \emph{covering density} $\mu(n,R,C)$  of an $[n,k]_qR$-code $C$ is defined as the ratio of the total volume of all $q^k V_{R,n,q}$ Hamming balls of radius $R$ centered at the codewords to the total volume $q^n$ of the space~$\F_{q}^{\,n}$, where $V_{R,n,q}$ is given in \eqref{eq11:HamBall}. We have
\begin{align}
&\mu(n,R,C)\triangleq\frac{q^k V_{R,n,q}}{q^n}=\frac{1}{q^{n-k}}\sum_{i=0}^R\binom{n}{i}(q-1)^i,~q\ge2;\notag\db\\
&\mu(n,R,C)\triangleq\frac{1}{2^{n-k}}\sum_{i=0}^R\binom{n}{i},~q=2.\label{eq11:CovDensity}
\end{align}
By Definition \ref{Def11:CoverRad}, $\mu(n,R,C)\ge1$.
We may write $\mu(C)$, $\mu(n-k)$, $\mu(r)$, or simply $\mu$ if the other parameters are clear by the context.
\end{definition}

  Let $U$ be an infinite family of codes with covering radius $R$, and let $U_n$ be a code in $U$ with length $n$. For the family $U$ we consider the asymptotic parameter \cite{Dav90PIT,GabDavTombR=2}:
\begin{equation}\label{eq11:liminfdens}
  \overline{\mu}(R,U)\triangleq\liminf_{n\rightarrow\infty,\,U_n\in U}\mu(n,R,U_n).
\end{equation}
We may write $\overline{\mu}(R)$  when $U$ is defined by the context.

The covering quality of a code is better if its covering density is smaller.
For fixed $n-k$ and $R$, the covering density $\mu$  decreases as $n$ decreases.

The covering problem for codes is to find codes with small covering radius
with respect to their lengths and dimensions. Codes investigated
 from this perspective are called \emph{covering codes} (as opposed to error-correcting codes).
When covering radius and codimension
(redundancy) are fixed, the covering problem becames finding codes with the smallest possible length and/or obtaining good upper bounds on the length.

\begin{definition}\label{def1:length function}
The \emph{length function} $\ell_q(r,R)$ is the smallest
length of a $q$-ary linear code with codimension (redundancy) $r$ and covering radius $R$.
\end{definition}

For an introduction to covering radii, coverings of vector Hamming spaces over finite fields, and covering codes, see
\cite{GrahSlo,CKMS-survey1985,Handbook-coverings,CHLL-bookCovCod,DGMP-AMC,HufPless,Delsarte} and the references therein. Additional  results (including bounds, parameters, constructions, and generalizations), related to covering codes are discussed in numerous works, see the online bibliography \cite{LobstBibl}. For example, we note
\cite{CLLM-survey1985-1994,Graismer-2024,BrPlWi,CohenFranklBounds,CLS-1986,Dav90PIT,DavParis,%
DDL-ACCT2-1990,DDL-ACCT3-1992,DDL-IEEE,HonkLits1996,%
Giul2013Survey,Janwa,KilbySloane1987-II,OstKaikUpBndBin1998,Struik1994R=2-3Bin,Struik,PlanFuncCovCodWuPan2025,%
DougJanCovRadCalc1991,LobsPleRevisTab1994}.

\subsection{Connection of covering codes with other areas of theory and practice}\label{subsec12:Connection}
The study of covering codes is a classical combinatorial problem. Covering codes are connected  to many areas of information theory, combinatorics, and practical applications.  For example, see \cite[Section 1.2]{CHLL-bookCovCod}, where applications include:  problems of data compression, decoding errors and erasures, football pools \cite{HamalHonLitsOstFootbPool,HamalFootbPool}, write-once memories, Berlekamp-Gale game, and Cayley graphs \cite{Cayley}. Covering codes can also be applied to reducing switching noise and to steganography \cite[Chapter 14]{libroBierbrauer}, \cite{BierbStegan,GalKaba,GalKabaPIT}. Linear codes of covering radius 2 and codimension 3 are relevant, in particular, for defining sets of block designs \cite{Bo-Sz-Ti} and for the degree/diameter problem in graph theory \cite{Cayley,KKKPS}.  Covering codes can be used in databases
\cite{PartSumQuer}, in constructions of identifying codes \cite{identif2}, for solving the learning parity with noise (LPN problem) \cite{LPN-CovCod}, in analysis of blocking switches \cite{swithc}, in reduced representations of logic functions \cite{Bulev,StanAstMor-book}, in list decoding of error correcting codes \cite{ListDecCovCod-2022}, in cryptography \cite{MenNewCrypCovCod}. There are also interesting connections between covering codes and following combinatorial problems: a popular game puzzle, called ``Hats-on-a-line'' \cite{Hats-on-line,LenSerHat2002}; multiple coverings of the farthest-off points or deep holes \cite{BDGMP-MultCovFurth,KaipaDeepHoles}; factorial designs \cite{ChengTang-FactDesigns}. In \cite{NavSamor-LPbound&CovArgum2009},  linear programming bounds for codes via a covering argument are considered.

\subsection{The one-to-one correspondence between covering codes and saturating sets in projective spaces}\label{subsec13:CovCode=SatSet}
Let $\PG(N,q)$ be the $N$-dimensional projective space over the Galois field~$\mathbb{F}_q$.
  A point set $S\subseteq\PG(N,q)$ is $\rho$-\emph{saturating} if every point $A\in\PG(N,q)$ can be written as a linear combination of at most $\rho+1$ points of $S$ and $\rho$ is the smallest value with this property.
 Saturating sets are also called ``saturated sets'', ``spanning sets'', and ``dense sets''.

Let $s_q(N,\rho)$ be the smallest size of a $\rho$-saturating set in $\mathrm{PG}(N,q)$.
If the positions of a column of a parity check matrix of an $[n,n-r]_qR$ code are interpreted as homogeneous coordinates of a point in $\PG(r-1,q)$, then this matrix corresponds to an $(R-1)$-saturating $n$-set in $\PG(r-1,q)$, and vice versa. Thus, there is a \emph{one-to-one correspondence} between $[n,n-r]_qR$ codes and $(R-1)$-saturating $n$-sets in $\PG(r-1,q)$. This implies
\begin{equation*}
  \ell_q(r,R)=s_q(r-1,R-1).
\end{equation*}

    For an introduction to geometries over finite fields and their connections with coding theory, see \cite{libroBierbrauer,Dav95,DGMP-AMC,EtzStorm2016,Giul2013Survey,Hirs,HirsStor-2001,Klein-Stor,LandSt} and the references therein.

\subsection{The short content of this paper}\label{subsec1.4:content}
This paper is devoted to \emph{new constructive upper bounds on the binary length functions} $\ell_2(r,R)$.
If there is an $[n,n-r]_2R$ code, then $\ell_2(r,R)\le n$. Throughout the paper, let $\Delta(r,R)$ be the decrease of the known upper bounds on $\ell_2(r,R)$ provided by the new results. For $q=2$, $R=2,3,4$, we consider the known versions of the $q^m$-concatenating constructions of covering codes and propose new ones.  In addition, we obtain new useful partitions of column sets of parity check matrices for certain covering codes.  As a result, for $R=2,3,4$ we obtain new infinite families of covering codes that provide improved upper bounds on $\ell_2(r,R)$ and smaller asymptotic covering densities $\overline{\mu}(R)$ \eqref{eq11:liminfdens} compared with previously  known values; also, we obtain a few sporadic new covering codes, see Section~\ref{sec2:main res} for details.

The paper is organized as follows. In Section \ref{sec2:main res}, we collect the new results  and compare them with the known ones.
In Section \ref{sec3:qm concat}, we give a brief description of  a basic $q^m$-concatenating construction for $q=2$. In Sections \ref{sec4:R=2known}, \ref{sec6:R=3known}, \ref{sec8:R=4known}, we give the known results, useful for this paper. Finally, in Sections \ref{sec5:R=2new}, \ref{sec7:R=3new}, \ref{sec9:R=4new}, we obtain the new results.

\section{The main results}\label{sec2:main res}
Let the asymptotic covering density $\overline{\mu}(R)$ be as in \eqref{eq11:liminfdens}.
Let $t$ be an integer.

As far as the authors know, for $R=2,r=2t$, and $R=3,r=3t-1$, and $R=4,r=4t$, the best (from the point of view of the asymptotic density) known infinite families of binary covering $[n,n-r]_2R$ codes are obtained in \cite{DavCovNewConstrCovCod,DDL-ACCT3-1992,DDL-IEEE}, with the following parameters:
\begin{align*}
&R=2,\,r=2t\ge8,\,n=27\cdot2^{t-4}-1,\,\overline{\mu}(2)\thickapprox1.4238;\,\ell_2(2t,2)\le n\\
&\T{\cite[p.\ 53]{DDL-ACCT3-1992},\cite[Example 3.1]{DDL-IEEE}}.\db\\
&R=3,\,r=3t-1\ge41,\,n=821\cdot2^{t-9}-1,\,\overline{\mu}(3)\thickapprox1.3744;\,\ell_2(3t-1,3)\le n\\
&\T{\cite[p.\ 53]{DDL-ACCT3-1992},\cite[Example 4.2]{DDL-IEEE}}. \db\\
&R=4,\,r=4t\ge40,\,n=2944\cdot2^{t-10}-2,\,\overline{\mu}(4)\thickapprox2.8467;\ell_2(4t,4)\le n\T{\cite[Examples 6,\,9]{DavCovNewConstrCovCod}}.
\end{align*}

In this paper, these known results are improved. In Sections \ref{sec5:R=2new}, \ref{sec7:R=3new}, \ref{sec9:R=4new}, Theorems \ref{th5:r=18,28}, \ref{th5:R=2NewInfFam}, \ref{th7:r=26,38,41}, \ref{th7:newr=3t-1a}, \ref{th9:newFamR=4},
 we obtain new infinite families of binary covering codes, whose parameters are collected in Theorem \ref{th2:InfFamNew}.
Also, in Theorems \ref{th5:-3r=22,24,26}, \ref{th7:303R=3}, \ref{th7:r=26,38,41}, \ref{th9:r=31,n=690}, \ref{th9:r=48..64} we obtain  sporadic new covering codes, whose parameters are collected in  Theorem \ref{th2:SporasNew}. The new code families and sporadic new codes provide the decrease of the known upper bounds on $\ell_2(r,R)$, see the values $\Delta(r,R)$ in Theorems \ref{th2:InfFamNew}, \ref{th2:SporasNew}.
\begin{theorem}\label{th2:InfFamNew}
 There are new infinite families of covering $[n,n-r]_2R$ codes such that:
\begin{align}
&R=2,\,r=2t,\,r=10,18,20,\T{ and }r\ge28,\,n=26\cdot2^{t-4}-1,\,\overline{\mu}(2)\thickapprox1.3203; \label{eq2:newR2}\db\\
&\phantom{R}\ell_2(r,2)=\ell_2(2t,2)\le26\cdot2^{r/2-4}-1= 26\cdot2^{t-4}-1;\,\Delta(r,2)=2^{r/2-4}. \notag\db\\
&R=3,\,r=3t-1,\,r=26\T{ and }r\ge44,\,n=819\cdot2^{t-9}-1,\,\overline{\mu}(3)\thickapprox1.3643;\label{eq2:newR3} \db\\
&\phantom{R}\ell_2(r,3)=\ell_2(3t-1,3)\le819\cdot2^{(r-26)/3}-1=819\cdot2^{t-9}-1;
\,\Delta(r,3)=2^{(r-23)/3}.\notag\db\\
&R=4,\,r=4t,\, r=40\T{ and }r\ge68,\,n=2943\cdot2^{t-10}-1,\,\overline{\mu}(4)\thickapprox2.8428;
\label{eq2:newfam4t}\db\\
&\phantom{R}\ell_2(r,4)=\ell_2(4t,4)\le2943\cdot2^{r/4-10}-1=2943\cdot2^{t-10}-1,\,\Delta(r,4)=2^{r/4-10}-1.\notag
\end{align}
\end{theorem}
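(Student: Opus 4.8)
Each of the three families \eqref{eq2:newR2}--\eqref{eq2:newfam4t} will be produced by combining two ingredients. The first is a stock of \emph{seed} codes: for the relevant covering radius $R\in\{2,3,4\}$ and for a few small redundancies one constructs an $[n,n-r]_2R$ code whose parity-check matrix is equipped with a distinguished \emph{partition of its column set} into blocks having prescribed covering-type properties (e.g.\ ``covering a coordinate subspace with radius $<R$'', ``$\rho$-saturating a complementary part'', and so on). The second ingredient is a version of the $q^m$-concatenating construction (here $q=2$), tuned so that, fed a code-with-partition of redundancy $r$, it outputs a binary code of redundancy $r+R$ and length $2n+1$ \emph{carrying a partition of the same type}. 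Because $2(c\cdot2^{t-a}-1)+1=c\cdot2^{(t+1)-a}-1$, iterating this construction from a seed at redundancy $r_0$ reproduces exactly the length formulas $n=26\cdot2^{t-4}-1$, $819\cdot2^{t-9}-1$, $2943\cdot2^{t-10}-1$ for all redundancies $r\equiv r_0$ in the residue class determined by $R$; the isolated small values $r=10,18,20$ (for $R=2$), $r=26$ (for $R=3$), $r=40$ (for $R=4$) are supplied by separate seeds that need not be of the self-reproducing kind. This is carried out in Sections~\ref{sec5:R=2new}, \ref{sec7:R=3new}, \ref{sec9:R=4new}.

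The verification of the inductive step has two parts. One checks that the output code has covering radius exactly $R$: the bound ``$\le R$'' follows by realizing an arbitrary syndrome $(\xb,\yb)\in\F_2^{\,r+R}$ as a sum of at most $R$ columns of the new matrix, splitting into cases according to the last $R$ coordinates $\yb$ and using the block structure of the partition to drive the first $r$ coordinates $\xb$ to any prescribed value with the remaining budget of columns; the bound ``$\ge R$'' is automatic since the length stays at or above the sphere-covering threshold. One also checks that the output partition again satisfies the hypotheses needed for the next application — this closure is precisely what the \emph{new partitions} announced in the introduction are designed to guarantee. The asymptotic covering density is then immediate from \eqref{eq11:CovDensity}: with $n=c\cdot2^{t-a}(1+o(1))$ and $r$ equal to $2t,\,3t-1,\,4t$ respectively, one has $\mu(n,R)=(1+o(1))\binom{n}{R}2^{-r}=(1+o(1))\,n^{R}/(R!\,2^{r})$, which tends to $169/128\approx1.3203$, $819^{3}/(6\cdot2^{26})\approx1.3643$ and $2943^{4}/(24\cdot2^{40})\approx2.8428$; finally $\Delta(r,R)$ is the difference between these lengths and the previously known bounds recalled at the start of this section.

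The main obstacle lies in the construction and analysis of the seeds, together with the fine bookkeeping of the step. One must locate seed codes that \emph{simultaneously} attain the record length $c\cdot2^{t-a}-1$ for their redundancy and admit a partition compatible with a concatenating construction whose additive length overhead is only $+1$ rather than a larger constant; and one must design the ``self-reproducing'' version of the $q^m$-concatenating construction with this minimal overhead and prove that it preserves the partition, so that the induction actually closes. Once this machinery is in place for each $R$, everything else — deciding which residue classes and which exceptional small redundancies are reached, and reading off $\Delta(r,R)$ — is routine.
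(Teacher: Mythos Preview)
Your overall architecture---seed codes equipped with special column partitions, fed into a $q^m$-concatenating step that preserves both the length pattern $c\cdot2^{t-a}-1$ and the partition structure---is exactly the paper's strategy, and your density computation is correct. But your concrete picture of the inductive step is off in a way that would prevent the argument from closing. You describe the step as ``$r\mapsto r+R$, $n\mapsto 2n+1$'' with the syndrome gaining $R$ new coordinates; this is the $q^m$-concatenating construction with $m=1$. However, every version QM$_j^R$ that achieves $n_{\mathrm{new}}=2^m(n_0+1)-1$ requires $2^m$ (or $2^m\pm1$) to dominate the number $p(\Hb_0,\ell)$ of blocks in the starting partition, and these block counts are never $\le2$: the $[51,41]_22$ seed has $p=11$, the $r=26$ seed for $R=3$ has $p(\Hb,2)=818$, the $r=40$ seed for $R=4$ has $p(\Hb,3)\le77$. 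So the $m=1$ step is simply unavailable, and what you flag as ``fine bookkeeping'' is actually the substance of the proof.

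What the paper does instead differs by case. For $R=3$ and $R=4$ there is essentially \emph{no} iteration: one manufactures a single seed ($r=26$ from the Golay code combined with the $[51,41]_22$ code via Construction~QM$_5^3$; $r=40$ from the $[90,70]_24,2$ code lifted once by QM$_1^4$ with $m=5$) and then applies \emph{one} step of QM$_3^3$/QM$_4^3$ or QM$_3^4$ with every admissible $m\ge m_{\min}$, hitting $r=r_0+Rm$ directly for all large $r$ in the residue class; the length formula survives because $2^m\bigl(c\cdot2^{t_0-a}-1+1\bigr)-1=c\cdot2^{t_0+m-a}-1$ for any $m$. For $R=2$ there \emph{is} a genuine iteration (Theorem~\ref{th5:R=2NewInfFam}), but each step uses a varying $m$ in a window such as $\lambda_0+1\le m\le t_0$, and one must track that the output partition size $p(\Hb_C)\le2^{m+1}+1$ stays small enough relative to the growing $t$ for the next step to fire. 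The ``isolated'' values you list ($r=18,20$ for $R=2$, $r=40$ for $R=4$) are not independent seeds but intermediate outputs of this same machinery with specially chosen small $m$.
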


\begin{theorem}\label{th2:SporasNew}
There are new covering $[n,n-r]_2R$ codes such that:
\begin{align*}
&R=2,\,r=22,24,26,\,n=26.5\cdot2^{r/2-4}-3,\,\ell_2(r,2)\le n;\,\Delta(r,2)=2^{r/2-5}+2. \db\\
&R=3,\,r=38,41,\,n=820\cdot2^{(r-26)/3}-2,\,\ell_2(r,3)\le n;\,\Delta(r,3)=2^5+\lfloor r/41\rfloor.\db\\
&R=3,\,r=21,\,n=303,\,\ell_2(21,3)\le n;\,\Delta(21,3)=5.\db\\
&R=4,\,r=48,52,56,60,64,\,n=2944\cdot2^{r/4-10}-3,\,\ell_2(r,4)\le n,\,\Delta(r,4)=1.\db\\
&R=4,\,r=31,\,n=690,\,\ell_2(31,4)\le n;\,\Delta(31,4)=11.
\end{align*}
\end{theorem}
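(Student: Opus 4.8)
The plan is to treat Theorem~\ref{th2:SporasNew} as a digest of five constructions carried out later in the paper and to reduce it to two tasks: exhibiting, for every listed triple $(R,r,n)$, a concrete binary $[n,n-r]_2R$ code, and certifying the claimed decrease $\Delta(r,R)$ by comparing $n$ with the best upper bound on $\ell_2(r,R)$ known before this paper. The first task is handled, case by case, by invoking Theorem~\ref{th5:-3r=22,24,26} for $R=2$, $r\in\{22,24,26\}$; Theorem~\ref{th7:r=26,38,41} for $R=3$, $r\in\{38,41\}$; Theorem~\ref{th7:303R=3} for the isolated code $[303,282]_23$; Theorem~\ref{th9:r=48..64} for $R=4$, $r\in\{48,52,56,60,64\}$; and Theorem~\ref{th9:r=31,n=690} for the isolated code $[690,659]_24$. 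Once such a code is in hand, Definition~\ref{def1:length function} gives $\ell_2(r,R)\le n$ with no further work.

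For the constructions the mechanism is uniform: start from a short binary covering code of small codimension and covering radius exactly $R$ (a ``starting'' code taken from a known family or produced ad hoc, possibly with computer assistance) and apply one or a few steps of a version of the $2^m$-concatenating construction recalled in Section~\ref{sec3:qm concat}, in some cases after equipping the parity-check matrix with one of the new partitions of its column set introduced here. The redundancies appearing in the three ``quasi-infinite'' lines are precisely those lying in the gaps not covered by the infinite families of Theorem~\ref{th2:InfFamNew}, so a bounded number of concatenating steps on the appropriate short code reaches each of them; the two genuinely isolated codes arise from a short, possibly computer-aided, construction. In every case one must verify that the covering radius of the resulting code equals $R$, i.e., that every syndrome is a sum of at most $R$ columns of the constructed parity-check matrix; this is the technical heart, and it is where the later sections do the work, partly by direct checking of the seed.

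For the decrease $\Delta(r,R)$ one subtracts $n$ from the previous record length. When $r$ is also covered by one of the best earlier infinite families recalled at the beginning of this section, this is a one-line computation: for $R=2$, $\Delta(r,2)=(27\cdot2^{r/2-4}-1)-(26.5\cdot2^{r/2-4}-3)=2^{r/2-5}+2$; for $R=4$, $\Delta(r,4)=(2944\cdot2^{r/4-10}-2)-(2944\cdot2^{r/4-10}-3)=1$; for $R=3$, $r=41$, the earlier family with $r=3t-1\ge41$ applies and $\Delta(41,3)=(821\cdot2^{5}-1)-(820\cdot2^{5}-2)=33=2^5+\lfloor 41/41\rfloor$. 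For the redundancies not covered by those families ($R=3$ with $r=38$ and $r=21$, and $R=4$ with $r=31$) the previous record is the smallest length available in the literature and its tables, and $\Delta$ is the resulting gap; the $\lfloor r/41\rfloor$ term encodes exactly this distinction, since $r=41=3\cdot14-1$ enjoys the stronger earlier family while $r=38=3\cdot13-1$ does not. Here one must also check that no shorter code was previously known, which is the only non-mechanical step of this part.

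The main obstacle lies not in Theorem~\ref{th2:SporasNew} itself but in the cited constructions: choosing starting codes of small codimension with covering radius exactly $R$, and selecting the column-set partitions so that the concatenating step inflates the length by no more than the constants $26.5$, $820$, $2944$ permit while still guaranteeing covering radius $R$ afterwards; squeezing the multiplicative constant down to these values is the crux. Given those ingredients, Theorem~\ref{th2:SporasNew} follows by collecting the resulting codes and performing the arithmetic above.
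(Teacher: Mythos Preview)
Your proposal is correct and matches the paper's own treatment: Theorem~\ref{th2:SporasNew} is stated as a summary with no standalone proof, and the paper explicitly says its parameters are collected from Theorems~\ref{th5:-3r=22,24,26}, \ref{th7:303R=3}, \ref{th7:r=26,38,41}, \ref{th9:r=31,n=690}, \ref{th9:r=48..64}. Your identification of which later theorem handles each line, and your arithmetic for the $\Delta$ values (including the distinction that the $821\cdot2^{(r-26)/3}-1$ family only applies from $r=41$, forcing one to consult the $822\cdot2^{(r-26)/3}-2$ family or tables for $r=38$), agree with what the paper does.
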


To obtain the new codes, in Sections \ref{sec5:R=2new}, \ref{sec7:R=3new}, \ref{sec9:R=4new}, we propose new versions of the $q^m$-concatenating constructions, see Theorems \ref{th5:-3}, \ref{th5:r=18,28}, \ref{th5:n0Psi}, \ref{th7:ell0=0R=3}, \ref{th9:ell0=1}.

\section{The $q^m$-concatenating constructions of covering\\ codes for $q=2$}\label{sec3:qm concat}

Throughout this paper, all matrices and columns are binary. An element of $\F_{2^m}$ written in a binary matrix denotes an $m$-dimensional binary column, that is a binary representation of this element; and vice versa, an $m$-dimensional binary column can be
viewed as an element of $\F_{2^m}$.

\subsection{Spherical $(R,\ell )$-capsules, $(R,\ell) $-objects, and partitions of the column set of a code parity check matrix}
\label{subsec31:capsul}
\begin{definition} \cite[Remark 5]{Dav90PIT}, \cite[p.\ 125]{DGMP-AMC}
Let $0\le\ell\le R$. A \emph{spherical }$(R,\ell)$-\emph{capsule} with center $\oc$
in $\F_2^{\,n}$ is the set $\{\ov:\ov\in \F_2^{\,n}$, $\ell\le d(\ov,\oc)\le R\}$.
\end{definition}

\begin{definition} \label{def31:partition} \cite[Definition 2.1]{DDL-IEEE}, \cite[Definition 2.1]{DGMP-AMC}
Let $\Hb\in\F_2^{\,r\times n}$ be a parity check $r\times n$ matrix
of an $ [n,n-r]_2R$ code $C$ and let $0\le\ell\le R$.
\begin{description}
  \item[(i)] A partition of the column set of the matrix
$\Hb$ into nonempty subsets is called an  $(R,\ell)$\emph{-partition} if every column of $F_2^{\,r}$ (including
the zero column) is equal to a sum of at least $\ell $ and at most $R$ columns of
$\Hb$ belonging to distinct subsets.  For an $(R,0)$-partition we can formally treat the zero column as the sum of 0 columns. We use the term ``$R$-\emph{partition}'' when the value of $\ell$ is not relevant or not known.

  \item[(ii)] If $\Hb$ admits an $(R,\ell)$-partition, the code $C$ is called an $(R,\ell) $\emph{-object} of the space $\F_2^{\,n}$ and is denoted as an $[n,n-r]_2R,\ell $ or an $[n,n-r,d]_2R,\ell$ code. It is not necessary that $\ell $ is the greatest value with the properties considered. Therefore, any $(R,\ell )$-partition with $\ell >0$ is also an $(R,\ell_1)$-partition with $\ell_1=0,1,\ldots,\ell -1$.  So, an $[n,n-r]_2R,\ell$ code with $\ell > 0$ is also an $[n,n-r]_2R,\ell_1$ code with $\ell_1 = 0, 1,\ldots,\ell - 1$.
\end{description}
\end{definition}

Let $p(\Hb,\ell;\Ps)$ be the number of subsets in an $(R,\ell)$-partition $\Ps$ of a parity check matrix $\Hb$ of an $[n,n-r]_2R,\ell$ code. Obviously, $R \le p(\Hb,\ell; \Ps)\le n$. If $p(\Hb,\ell;\Ps)=n$, i.e. every subset contains one column, then the partition $\Ps$ is called \emph{trivial}. The trivial partition of a parity check matrix of an $ [n,n-r]_2R,\ell $ code is an $(R,\ell )$-partition. We use the notation $p(\Hb,\ell)$ when the partition $\Ps$ is not relevant. In other words, the notation $p(\Hb,\ell)$ means that the matrix  $\Hb$ admits an $(R,\ell)$-partition into $p(\Hb,\ell)$ subsets. For the notations $p(\Hb,\ell;\Ps)$ and $p(\Hb,\ell)$, the value of $R$  is defined by the context.

\begin{lemma}\label{Lem31:d&elll}
\emph{\cite{Dav90PIT,Dav95,DDL-IEEE}, \cite[Lemma 2.2]{DGMP-AMC}} An $[n,n-r,d]_2R$ code
is an $[n,n-r,d]_2R,\ell $ code with $\ell \geq 1$ if and only if $d\leq
R. $ If $d>R$ the maximum possible value of $\ell $ is zero$.$
\end{lemma}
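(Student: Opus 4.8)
The plan is to prove both directions of the equivalence by directly using Definition~\ref{def31:partition} together with the geometric picture of $(R,\ell)$-partitions coming from spherical capsules. The key observation is the following: if $\Ps$ is an $(R,\ell)$-partition of a parity check matrix $\Hb$ of an $[n,n-r]_2R$ code $C$, then by definition \emph{every} column of $\F_2^{\,r}$ — including the zero column — is a sum of at least $\ell$ and at most $R$ columns of $\Hb$ lying in distinct subsets of $\Ps$. Applying this to the zero column gives a set of at least $\ell$ and at most $R$ columns of $\Hb$ that sum to $\zb$; equivalently, $C$ contains a nonzero codeword of weight at most $R$ (and at least $\ell$). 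Since $\ell\ge1$, this forces $d\le R$. This handles the ``only if'' direction: if $C$ is an $[n,n-r,d]_2R,\ell$ code with $\ell\ge1$, then $d\le R$.

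For the ``if'' direction, suppose $d\le R$. I will exhibit an $(R,1)$-partition of a suitable parity check matrix $\Hb$ of $C$, which shows $C$ is an $[n,n-r,d]_2R,1$ code, hence an $[n,n-r,d]_2R,\ell$ code with $\ell=1\ge1$. Start from the trivial partition $\Ps_0$ of $\Hb$ (each subset a single column); this is already an $(R,0)$-partition since $C$ has covering radius $R$ and the zero column is the empty sum. The point is to upgrade it to an $(R,1)$-partition, i.e.\ to guarantee that the zero column itself can be written as a sum of between $1$ and $R$ columns from distinct (singleton) subsets. Because $d\le R$, there is a nonzero codeword $\oc\in C$ of weight $d\le R$; its support picks out $d$ columns of $\Hb$ that sum to $\zb$, and these $d$ columns lie in $d$ distinct singleton subsets. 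Since $1\le d\le R$, this is exactly the missing requirement, so the trivial partition $\Ps_0$ is in fact an $(R,1)$-partition and $C$ is an $[n,n-r,d]_2R,1$ code.

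Finally, the contrapositive of the first bullet yields the last sentence of the statement: if $d>R$, then $C$ cannot be an $[n,n-r,d]_2R,\ell$ code for any $\ell\ge1$, so by Definition~\ref{def31:partition}(ii) the only admissible value is $\ell=0$, which is always attainable (the trivial partition is an $(R,0)$-partition because the covering radius is $R$, treating $\zb$ as the empty sum). Thus the maximum possible value of $\ell$ is $0$.

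I do not expect a serious obstacle here; the proof is essentially an unwinding of definitions. The one subtle point to be careful about is the role of the \emph{zero} column in the definition of an $(R,\ell)$-partition: for $\ell\ge1$ the zero column must genuinely be a \emph{nonempty} sum of at most $R$ columns from distinct subsets, and it is precisely this condition that is equivalent to the existence of a low-weight codeword, i.e.\ to $d\le R$. Every other column of $\F_2^{\,r}$ is nonzero and is already handled by the covering-radius hypothesis $R(C)=R$ via Definition~\ref{Def11:CoverRad}(ii), independently of the partition — so that part of the condition is automatic for the trivial partition and imposes nothing extra. Keeping the empty-sum convention straight is the only thing that requires care.
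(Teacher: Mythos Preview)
Your argument is correct and is the natural unwinding of the definitions. Note, however, that the paper does not actually supply its own proof of this lemma: it is stated with citations to \cite{Dav90PIT,Dav95,DDL-IEEE} and \cite[Lemma~2.2]{DGMP-AMC} and used without proof. Your write-up matches the standard argument one finds in those references: the ``only if'' direction comes from applying the $(R,\ell)$-partition condition to the zero column, which forces a dependency among at most $R$ columns of $\Hb$ and hence $d\le R$; the ``if'' direction is witnessed by the trivial partition together with a minimum-weight codeword. The one point you flagged --- that for $\ell\ge1$ the zero column must be a genuinely nonempty sum --- is indeed the whole content of the lemma, and you handled it correctly.
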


Throughout the paper, all examined codes have minimum distance $d\ge3$. So, all codes of covering radius $R=2$ are $(2,0)$-objects.
Therefore, for codes of covering radius $R=2$, for short, we use the notations $[n,n-r]_22$ and $[n,n-r,d]_22$ codes, 2-objects, 2-partition, $p(\Hb;\Ps)$, $p(\Hb)$.

\begin{lemma} \emph{\cite[p.\ 125]{DGMP-AMC}}
 Spherical $(R,\ell)$-capsules centered at vectors of an $(R,\ell)$-object cover the space $\F_2^{\,n}.$
\end{lemma}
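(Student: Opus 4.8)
The statement to prove is essentially: if $C$ is an $(R,\ell)$-object of $\F_2^{\,n}$, then the spherical $(R,\ell)$-capsules centered at its codewords cover $\F_2^{\,n}$.

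\medskip

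The plan is to unwind the definitions and show that every vector $\ov\in\F_2^{\,n}$ lies in some spherical $(R,\ell)$-capsule centered at a codeword. First I would fix a parity check matrix $\Hb\in\F_2^{\,r\times n}$ of $C$ together with an $(R,\ell)$-partition $\Ps$ of its column set, which exists by hypothesis (Definition \ref{def31:partition}(ii)). Given $\ov\in\F_2^{\,n}$, form its syndrome $\ou\triangleq\ov\times\Hb^{tr}\in\F_2^{\,r}$, viewed as a column of $\F_2^{\,r}$. By the defining property of an $(R,\ell)$-partition, the column $\ou$ can be written as a sum of at least $\ell$ and at most $R$ columns of $\Hb$, say columns indexed by a set $J\subseteq\{1,\dots,n\}$ with $\ell\le\#J\le R$, all lying in distinct subsets of $\Ps$ (the distinctness is needed for later concatenating constructions but is not needed for this particular covering statement; I would still carry it along). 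Let $\oe_J\in\F_2^{\,n}$ be the characteristic vector of $J$, i.e. the weight-$\#J$ vector with ones exactly in positions $J$; then $\oe_J\times\Hb^{tr}=\ou=\ov\times\Hb^{tr}$, so $\ov-\oe_J=\ov+\oe_J$ has zero syndrome and therefore is a codeword $\oc\triangleq\ov+\oe_J\in C$.

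\medskip

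Next I would estimate the distance $d(\ov,\oc)$. Since $\oc=\ov+\oe_J$ over $\F_2$, we have $\ov=\oc+\oe_J$, hence $d(\ov,\oc)=\T{wt}(\oe_J)=\#J$. Combining with $\ell\le\#J\le R$ gives $\ell\le d(\ov,\oc)\le R$, which is exactly the condition for $\ov$ to belong to the spherical $(R,\ell)$-capsule centered at $\oc$. Since $\ov$ was arbitrary, the union of these capsules over all codewords is all of $\F_2^{\,n}$. A small caveat to address: when $\ell=0$ one must allow $J=\varnothing$, in which case $\ou=\ozero$, $\oe_J=\ozero$, and $\oc=\ov$ itself, consistent with the ``sum of $0$ columns'' convention noted in Definition \ref{def31:partition}(i); the capsule condition $0\le d(\ov,\oc)\le R$ then holds trivially. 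I would also remark that one does not even need $\ell$ to be the largest value with this property.

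\medskip

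I do not expect any genuine obstacle here: the result is a direct translation between the syndrome formulation of covering radius in Definition \ref{Def11:CoverRad}(ii) and the geometric capsule language. The only point requiring a line of care is the bookkeeping of the correspondence ``syndrome $=$ sum of $\le R$ columns'' $\longleftrightarrow$ ``$\ov$ within distance $\le R$ of a codeword'', together with the lower bound coming from the $\ell$ in the partition; this is where the hypothesis that $C$ is an $(R,\ell)$-object (rather than merely having covering radius $R$) is used. Everything else is immediate from $d(\ov,\oc)=\T{wt}(\ov+\oc)$ over $\F_2$.
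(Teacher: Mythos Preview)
Your proof is correct and is exactly the natural syndrome argument one would expect. Note that the paper itself does not prove this lemma at all; it simply quotes it from \cite[p.~125]{DGMP-AMC}, so there is no in-paper proof to compare against, but the argument there is the same translation between ``syndrome $=$ sum of $\ell$ to $R$ distinct columns'' and ``distance $\ell$ to $R$ from a codeword'' that you have written out.
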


\subsection{Basic $q^m$-concatenating Construction QM for $\boldsymbol{q=2}$}\label{subsec32:QM}
The $q^m$-concatenating constructions were proposed in 1990 in \cite{Dav90PIT} and then investigated and developed in numerous works, see e.g. \cite{Handbook-coverings,Dav95,DavParis,DDL-ACCT2-1990,DDL-ACCT3-1992,DDL-IEEE,DGMP-AMC,Giul2013Survey}, \cite[Section 5.4]{CHLL-bookCovCod}, \cite[Supplement]{Struik}, and the references therein. The term ``$q^m$-concatenating constructions'' is  not always used in these papers.

Starting from an $[n_0,n_0-r_0]_qR$ code of length $n_0$, the $q^m$-concatenating constructions yield  new $[n,n-(r_0+Rm)]_qR$ codes with the same covering radius $R$ and length $ n=q^m n_0+N_m$, where $m$ must satisfy some conditions and $N_m\le R\theta _{m,q}$, with $\theta_{m,q}=(q^m-1)/(q-1)$, $\theta_{m,2}=2^m-1$.
The covering density~\eqref{eq11:CovDensity} of the new codes is almost the same (slightly greater) as for the starting code. Using the constructions iteratively, we can obtain infinite families of new $ [n,n-(r_0+Rm)]_qR$ codes where $m$ ranges over an infinite set of integers.

\textbf{Basic Construction QM.}
We use the results and ideas of \cite{Dav90PIT,Dav95,DDL-ACCT3-1992,DDL-IEEE,DGMP-AMC}.

Let $\Hb_0$ be a parity check $r_0\times n_0$ matrix of an $[n_0,n_0-r_0]_2R,\ell_0$ \emph{starting} code $C_0$,
\begin{equation}\label{eq32:H0}
  \Hb_0=[\hb_1\hb_2\ldots\hb_{n_0}],~\hb_j\in\F_2^{\,r_0},~j=1,\ldots,n_0,
\end{equation}
where $\hb_j$ is a binary $r_0$-positional column. Assume that $\Hb_0$ admits a starting $(R,\ell _0)$-partition $\Ps_0$ into $p(\Hb_0,\ell _0)$ subsets. Let $m\geq 1$ be an integer depending on $\Ps_0$ and $n_0$. With every column $\hb_j$ we associate an element $\beta_j$ $\in \F_{2^m}\cup \{\ast\}$ so that $\beta_i\neq \beta_j$
if columns $\hb_i$ and $\hb_j$ belong to \emph{distinct }subsets of $ \Ps_0$. If $\hb_i$ and $\hb_j$ belong to the same subset we are free to assign either $\beta_i=\beta_j$ or $\beta_i\ne \beta_j$. We call $\beta_j$ an \emph{indicator} of the column $\hb_j$. Let $\Bs=\{\beta_1,\beta _2,\ldots,\beta_{n_0}\}$ be an \emph{indicator set}. The condition $\#\Bs \geq p(\Hb_0,\ell _0)$ is necessary.

Let $\D$ be an $(r_0+Rm)\times N_m$ matrix with \emph{the $r_0$ top rows equal to the zero vector} and with $N_m\le (R-\ell_0)\theta_{m,2}$. For $j=1,2,\dots,n_0$, we denote by  $\Ab(\hb_j,\beta_j)$ an $(r_0+Rm)\times 2^m$ matrix, in which the \emph{$r_0$ top rows are formed by $2^m$-fold repeating of the column $\hb_j$}. Let $\mathbf{0}_{v}$ be the zero matrix with $v$ rows where the number of columns (possibly, 1) is clear by the context.

Finally, define a new code $C$ as an $[n,n-(r_0+Rm)]_2R_C$ code with $n=2^m n_0+N_m$ and the parity check $(r_0+Rm)\times n$ matrix $\Hb_C$ of the following form:
\begin{align}
& \Hb_C \triangleq\left[ \D~\Ab(\hb_1,\beta_1)~\Ab(\hb_2,\beta_2)~\ldots ~
\Ab(\hb_{n_0},\beta_{n_0})\right],\label{eq32:QM-H}\db\\
&\Ab(\hb_j,\beta_j)\triangleq\left[ \renewcommand{\arraystretch}{1.1}
\begin{array}{cccc}
\hb_j & \hb_j & \mathbf{\cdots } & \hb_j \\
\xi _1 & \xi _2 & \cdots & \xi _{q^m} \\
\beta _j\xi _1 & \beta _j\xi _2 & \cdots & \beta _j\xi _{q^m} \\
\beta _j^{2}\xi _1 & \beta_j^{2}\xi _2 & \cdots & \beta _j^{2}\xi
_{q^m} \\
\vdots & \vdots & \vdots & \vdots \\
\beta _j^{R-1}\xi _1 & \beta_j^{R-1}\xi _2 & \cdots & \beta
_j^{R-1}\xi _{q^m}
\end{array}
\right] \T{if }\beta _j\in \F_{2^m},\label{eq32:Aj}\db\\
&\Ab(\hb_j,\beta_j)\triangleq\left[
\begin{array}{cccc}
\hb_j & \hb_j & \mathbf{\cdots } & \hb_j \\
\zb_{(R-1)m} & \zb_{(R-1)m} & \cdots & \zb_{(R-1)m} \\
\xi _1 & \xi _2 & \cdots & \xi _{q^m}
\end{array}
\right] \T{ if }\beta _j=\ast ,  \label{eq32:A*}\db\\
&\phantom{\Ab(\hb_j,\beta_j)\triangleq~}\{\xi _1,\xi _2,\ldots ,\xi _{q^m}\}=\F_{2^m},~\xi_1=0. \label{eq32:xi}
\end{align}

If $m$, $\D$, and $\Bs$ are carefully chosen, then the covering radius $R_C$ of the new code $C$ is equal to the
covering radius $R$ of the starting code $C_0$; see  Sections \ref{sec4:R=2known}--\ref{sec9:R=4new}, where we use
the following notations:
\begin{align}
  & \Wb_m\T{ is the parity check $m\times(2^m-1)$ matrix of the }[2^m-1,2^m-1-m]_21\label{eq32:Wbm}\db\\
  &\T{Hamming code}; \Wb_m\setminus\wb \T{ is the matrix $\Wb_m$ without a column }\wb.\notag\db\\
&\D_1(R)=\left[\begin{array}{@{}c@{}}
 \zb_{r_0+(R-1)m}\\
 \Wb_m
 \end{array}\right],\D_2(R)=\left[\begin{array}{cc}
 \zb_{r_0+(R-2)m}&\zb_{r_0+(R-2)m}\\
 \Wb_m&\zb_{m}\\
 \zb_{m}&\Wb_m
 \end{array}\right],\D_3=\left[\begin{array}{c}
 \zb_{r_0+m}\\
 \Wb_m\\
 \zb_{m}
 \end{array}\right].\label{eq32:varD}\db\\
 &\D_4=\left[\begin{array}{@{}cc@{}}
 \zb_{r_0}&\zb_{r_0}\\
 \Wb_m&\zb_{m}\\
 \zb_{2m}&\Hcb_{2m}
 \end{array}\right],
 \D_5=\left[\begin{array}{@{}cc@{}}
 \zb_{r_0+m}&\zb_{r_0+m}\\
 \Hcb_{2m}&\zb_{2m}\\
\zb_{m}&\Wb_m
 \end{array}\right],
 \D_6=\left[\begin{array}{@{}ccc@{}}
 \zb_{r_0}&\zb_{r_0}&\zb_{r_0}\\
 \Wb_m\setminus\wb&\wb&\zb_m\\
 \zb_m&\wb&\Wb_m\setminus\wb
  \end{array}\right].\label{eq32:varD2}\db\\
  &\Hcb_{2m}\T{ is a parity check }2m\times n_{2m} \T{ matrix of an }[n_{2m},n_{2m}-2m]_22\T{ code }
\Vc_{2m}.\label{eq32:Hcb2m}\db\\
  &\T{QM}_j^R\T{ is the $j$-th version of Construction QM for covering radius }R.\notag\end{align}

\section{The known results on binary linear codes of covering radius 2}\label{sec4:R=2known}

\begin{theorem}\label{th4:DDLR=2} \emph{\cite[Theorem 3.1]{DDL-IEEE}}
$\mathbf{Constructions~QM_1^2,~QM_2^2}$. In Construction \emph{QM} of Section~$\ref{subsec32:QM}$, let the starting code $C_0$  be an $[n_0,n_0-r_0]_22$ code with a parity check matrix $\Hb_0$ as in \eqref{eq32:H0}. For $r=r_0+2m$, $n=2^mn_0+N_m$, we define a new $[n,n-r]_2R_C$ code $C$ by a parity check
$r\times n$ matrix $\Hb_C$ of the form \eqref{eq32:QM-H}--\eqref{eq32:varD}. Let the indicator set $\Bs$, parameter $m$, and $r\times N_m$ submatrix $\D$ be chosen by one of the ways \eqref{eq4:CaseA1inp}, \eqref{eq4:CaseA2inp}, which set specific  Constructions
\emph{QM}$_1^2$, \emph{QM}$_2^2$. We have
\begin{align}
&\emph{QM}_1^2:~\Bs\subseteq\F_{2^m}\cup\{*\},~2^m+1\ge p(\Hb_0),~ \D=\D_2(2).\label{eq4:CaseA1inp}\db\\
&\emph{QM}_2^2:~\Bs=\F_{2^m},~n_0\ge2^m\ge p(\Hb_0),~
 \D=\D_1(2). \label{eq4:CaseA2inp}
\end{align}
Then the new code $C$ is an $[n, n - r]_22$ code with the parameters:
\begin{align}
&\emph{QM}_1^2:~R=2,~n=2^m(n_0+2)-2,~r = r_0 + 2m,~p(\Hb_C)\le p(\Hb_0).\label{eq4:CaseA1res}\db\\
&\emph{QM}_2^2:~R=2,~n=2^m(n_0+1)-1,~r = r_0 + 2m,~p(\Hb_C)\le 2^{m+1}+1.\label{eq4:CaseA2res}
\end{align}
\end{theorem}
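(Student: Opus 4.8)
The plan is to verify, for each of the two versions $\mathrm{QM}_1^2$ and $\mathrm{QM}_2^2$, that the matrix $\Hb_C$ of \eqref{eq32:QM-H} is the parity check matrix of a code with covering radius exactly $2$, i.e.\ that every column of $\F_2^{\,r}$ (the zero column included, by Definition~\ref{Def11:CoverRad}(ii)) is a sum of at most $2$ columns of $\Hb_C$, and that $2$ columns are genuinely needed for at least one such target (the latter is immediate since the starting code already has covering radius $2$, so $R_C\ge2$; alternatively one checks a single syndrome requiring two columns). The length formula $n=2^mn_0+N_m$ with $N_m=2^m-1$ for $\D_1$ (so $n=2^m(n_0+1)-1$) and $N_m=2(2^m-1)$ for $\D_2$ (so $n=2^m(n_0+2)-2$), and the redundancy $r=r_0+2m$, are read directly off the block shapes in \eqref{eq32:varD}, \eqref{eq32:QM-H}, \eqref{eq32:Aj}, \eqref{eq32:A*}; these need no argument beyond bookkeeping.

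The core is the covering-radius check. Write an arbitrary target column of $\F_2^{\,r}$ as $\binom{\oh}{\ou}$ with $\oh\in\F_2^{\,r_0}$ the top part and $\ou\in\F_2^{\,2m}$ the bottom part; split $\ou=(\ou_1,\ou_2)$ into two $m$-blocks according to the two extra row-bands introduced by the construction. Because the starting code has covering radius $2$, there is a $2$-partition ($\ell_0=0$, as forced by Lemma~\ref{Lem31:d&elll} and the convention $d\ge3$), so $\oh$ is a sum of at most two columns $\hb_i,\hb_j$ of $\Hb_0$ lying in distinct subsets of $\Ps_0$; thus their indicators $\beta_i,\beta_j$ are distinct, and both columns $\Ab(\hb_i,\beta_i),\Ab(\hb_j,\beta_j)$ are available. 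The point of the construction is that by selecting the $\xi$-column inside $\Ab(\hb_i,\beta_i)$ and the $\xi$-column inside $\Ab(\hb_j,\beta_j)$ one gets a bottom contribution, in the $\beta$-block, of the form $\beta_i\xi + \beta_j\xi'$ over $\F_{2^m}$, and — since $\beta_i\ne\beta_j$ — the map $(\xi,\xi')\mapsto(\xi+\xi',\beta_i\xi+\beta_j\xi')$ is a bijection of $\F_{2^m}^2$. Hence any prescribed pair $(\ou_1,\ou_2)\in\F_{2^m}^2$ is hit, while the top part simultaneously reads $\hb_i+\hb_j=\oh$. That settles targets whose $\oh$-part needs two starting columns. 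When $\oh$ needs only one starting column $\hb_i$ (or is zero), one column $\Ab(\hb_i,\beta_i)$ handles $\oh$ and one chosen value of $\ou_1$, say, but leaves $\ou_2$ arbitrary: this is exactly where the $\D$-block enters. For $\mathrm{QM}_2^2$, $\D=\D_1(2)$ supplies a copy of $\Wb_m$ in the last $m$ rows with zeros above, i.e.\ a single extra column can realize any nonzero value in the $\ou_2$-band while contributing $\ozero$ to the top $r_0+m$ rows; combined with a suitable column from some $\Ab(\hb_i,\beta_i)$ (with $\Bs=\F_{2^m}$ covering all values, which is why $2^m\ge p(\Hb_0)$ is demanded) this covers every remaining target with two columns. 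For $\mathrm{QM}_1^2$, $\D=\D_2(2)$ supplies two $\Wb_m$-blocks, in the two bottom bands separately, so the ``$\ast$''-indicator columns and the $\D_2$-columns together patch the cases where the partition of $\Hb_0$ uses an $\ast$; the bound $2^m+1\ge p(\Hb_0)$ is precisely what guarantees enough distinct indicators $\F_{2^m}\cup\{*\}$.

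Finally one bounds $p(\Hb_C)$, i.e.\ exhibits an explicit $2$-partition of the columns of $\Hb_C$ of the claimed size. For $\mathrm{QM}_2^2$ one groups: the $N_m=2^m-1$ columns of $\D_1$ as they stand, and within each $\Ab(\hb_j,\beta_j)$ one sorts its $2^m$ columns by the value of $\xi$, merging across different $j$ that share a $\xi$-value; counting the resulting classes gives the bound $p(\Hb_C)\le 2^{m+1}+1$. For $\mathrm{QM}_1^2$ one instead transports the starting partition $\Ps_0$ through the construction: columns $\Ab(\hb_i,\cdot)$ and $\Ab(\hb_j,\cdot)$ coming from the same subset of $\Ps_0$ (together with the appropriate $\D_2$ columns) are merged, yielding $p(\Hb_C)\le p(\Hb_0)$. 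One must check that each proposed grouping is still a valid $2$-partition — every column of $\F_2^{\,r}$ is a sum of $\le2$ columns from distinct parts — which is the same bijectivity argument as above, now only requiring that the two chosen columns sit in different parts. The main obstacle, and the part demanding the most care, is precisely this last compatibility check: ensuring that the merging that shrinks the partition does not destroy the ``distinct subsets'' requirement for any target column, in particular tracking how the $\ast$-indicator columns and the $\D_2$/$\D_1$ auxiliary columns are distributed among the parts. Everything else is linear-algebra bookkeeping over $\F_{2^m}$, with the single nontrivial ingredient being the invertibility of $\left(\begin{smallmatrix}1&1\\\beta_i&\beta_j\end{smallmatrix}\right)$ when $\beta_i\ne\beta_j$.
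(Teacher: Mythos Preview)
The paper does not prove this theorem; it is quoted from \cite[Theorem~3.1]{DDL-IEEE}. However, the paper proves the very closely related Constructions $\mathrm{QM}_4^2,\mathrm{QM}_5^2$ in Theorem~\ref{th5:r=18,28}, and the argument there is the template to compare against. Your covering-radius argument for the case $\oh=\hb_i+\hb_j$ with $\beta_i\ne\beta_j$ is exactly right and is the same Vandermonde/bijectivity step the paper uses.

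There are two genuine gaps. First, for $\mathrm{QM}_2^2$ with $\D=\D_1(2)$, the case $\oh=\zb_{r_0}$, $\ou_1\ne0$ is \emph{not} handled by the $\D$-block: $\D_1(2)$ contributes only to the last $m$ rows, so no sum of one $\Ab$-column and one $\D_1$-column can fix both $\ou_1$ and an arbitrary $\ou_2$. The correct move (see the proof of Theorem~\ref{th5:r=18,28}(i), case~(3)) is to use \emph{two} columns of the \emph{same} block $\Ab(\hb_k,\beta_k)$, namely $(\hb_k,\zb_m,\zb_m)^{tr}+(\hb_k,\ou_1,\beta_k\ou_1)^{tr}$ with $\beta_k=\ou_2/\ou_1$. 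This is precisely why $\Bs=\F_{2^m}$ (and hence $n_0\ge 2^m$) is required: the needed indicator $\ou_2/\ou_1$ must actually occur.

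Second, your partition for $\mathrm{QM}_2^2$ is wrong. Grouping columns by their $\xi$-value across all $\Ab$-blocks gives at most $2^m+1$ classes, not $2^{m+1}+1$, and it is not a valid $2$-partition: in case~(1) the solution may have $\xi=\xi'$ (whenever $\ou_1=0$), placing both chosen columns in the same class. The construction that yields $2^{m+1}+1$ (again as in Theorem~\ref{th5:r=18,28}(i)) is: first form $2^m$ interim subsets by \emph{indicator value} (columns of $\Ab(\hb_i,\beta_i)$ and $\Ab(\hb_j,\beta_j)$ go to the same subset iff $\beta_i=\beta_j$), then split each interim subset into the single $\xi_1=0$ column and the remaining $2^m-1$ columns, and finally put $\D$ in one extra subset. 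The $\xi=0/\xi\ne0$ split is forced exactly by the $\oh=\zb_{r_0}$ trick above, since both summands there share the same indicator $\beta_k$. Your description of the $\mathrm{QM}_1^2$ partition (transporting $\Ps_0$) is the right idea, but you still owe the verification that the $\D_2(2)$ columns can be absorbed into the existing $p(\Hb_0)$ classes without violating the ``distinct subsets'' condition in cases~(2) and~(3).
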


We introduce the notation.
\begin{align}
&\phi(r)=\phi(2t)\triangleq27\cdot2^{r/2-4}-1 = 27\cdot2^{t-4}-1  \T{ for } r=2t\label{eq4:notatKnwR=2even};\db\\
&f(r)=f(2t -1)\triangleq5\cdot2^{(r-3)/2}-1= 5\cdot2^{t-2}-1   \T{ for }  r=2t -1.\label{eq4:notatKnwR=2odd}
\end{align}

\begin{theorem}\label{th4:KnownFamR=2}
Let the asymptotic covering density $\overline{\mu}(R)$ be as in \eqref{eq11:liminfdens}. Let $\phi(r)$ and $f(r)$ be as in \eqref{eq4:notatKnwR=2even} and \eqref{eq4:notatKnwR=2odd}. There exist the following infinite families of binary linear $[n,n-r]_22$ covering codes with  growing codimension $r$ and parameters as in \eqref{eq4:KnwFamR=2even r}, \eqref{eq4:KnwFamR=2odd r}.
\begin{description}
  \item[(i)] \emph{\cite[p.\ 53]{DDL-ACCT3-1992}, \cite[Example 3.1, equation (1.3)]{DDL-IEEE}, \cite[Theorem 5.4.27(i), equation (5.4.30)]{CHLL-bookCovCod}}
\begin{align}
&R=2,~r=2t\ge8,~t\ge4,~n=27\cdot2^{t-4}-1=\phi(2t),~\overline{\mu}(2)\thickapprox1.4238;
\label{eq4:KnwFamR=2even r}\db\\
&\ell_2(2t,2)\le 27\cdot2^{t-4}-1=\phi(2t),~t\ge4.\notag
\end{align}
  \item[(ii)] \emph{\cite[Theorem 1, equation (5)]{GabDavTombR=2}, \cite[Remark 3.1]{DDL-IEEE}, \cite[Theorem 5.4.27(ii)]{CHLL-bookCovCod}}
\begin{align}
&R=2,~r=2t-1\ge3,~t\ge2,~n=5\cdot2^{t-2}-1=f(r),~\overline{\mu}(2)\thickapprox1.5625;\label{eq4:KnwFamR=2odd r}\db\\
&\ell_2(2t-1,2)\le 5\cdot2^{t-2}-1=f(2t-1),~t\ge2.\notag
\end{align}
\end{description}
\end{theorem}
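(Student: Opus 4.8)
The plan is to obtain both families by iterating Construction QM of Theorem \ref{th4:DDLR=2} from a suitable small ``seed'' code, as in the cited works; the argument is then essentially a bookkeeping of the triple $(n,r,p(\Hb))$ along the iteration, followed by a density computation. One step of $\mathrm{QM}_2^2$ (resp.\ $\mathrm{QM}_1^2$) with parameter $m$ takes an $[n_0,n_0-r_0]_22$ code whose parity check matrix admits a $2$-partition into $p(\Hb_0)\le 2^m$ (resp.\ $\le 2^m+1$) subsets to an $[n,n-r]_22$ code with $r=r_0+2m$ and $n=2^m(n_0+1)-1$ (resp.\ $n=2^m(n_0+2)-2$), and covering radius exactly $2$ is part of the conclusion of Theorem \ref{th4:DDLR=2}; see \eqref{eq4:CaseA1res}, \eqref{eq4:CaseA2res}. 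So I would: (a) fix a seed carrying an explicit economical $2$-partition; (b) run QM repeatedly, at each step choosing $\mathrm{QM}_1^2$ (which is partition-preserving, $p(\Hb_C)\le p(\Hb_0)$) or $\mathrm{QM}_2^2$ (which subtracts an odd amount from the length, at the price of a possibly larger partition), and choosing $m$ so that the partition hypothesis stays satisfied; (c) solve the recursion for $n$ in terms of $r=2t$ (resp.\ $r=2t-1$), arriving at $n=\phi(2t)$ of \eqref{eq4:notatKnwR=2even} (resp.\ $n=f(2t-1)$ of \eqref{eq4:notatKnwR=2odd}); (d) substitute into \eqref{eq11:CovDensity} and pass to the limit \eqref{eq11:liminfdens}.

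For part (ii) a convenient seed is the $[4,1]_22$ code whose four parity check columns are the points of $\PG(2,2)$ lying off a fixed line: covering radius $2$ is immediate since these four columns together with their pairwise sums exhaust $\F_2^{\,3}$, and $\{\hb_1\},\{\hb_2,\hb_3,\hb_4\}$ is a $2$-partition into $p(\Hb_0)=2$ subsets. Applying $\mathrm{QM}_2^2$ with $m=1$ (so $\Bs=\F_2$ and $n_0=4\ge 2^m=2=p(\Hb_0)$) sends $(n,r)\mapsto(2n+1,\,r+2)$, and iterating from $(4,3)$ gives $n=5\cdot2^{t-2}-1=f(2t-1)$ at $r=2t-1$. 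Since $n\sim\tfrac54\cdot2^{t}$ and $\binom n2\sim n^2/2$, one gets $\mu\to\tfrac{25}{16}$, hence $\overline{\mu}(2)\thickapprox1.5625$, and $\ell_2(2t-1,2)\le n$ by Definition \ref{def1:length function}. For part (i) the scheme is the same but from a slightly larger seed of even codimension (small-$t$ cases being handled directly or as the start of the recursion) carrying an explicit few-subset $2$-partition, the QM steps being combined — with suitable $m$, and possibly one closing $\mathrm{QM}_2^2$ that accounts for the ``$-1$'' — so that the closed form reads $n=27\cdot2^{t-4}-1=\phi(2t)$ at $r=2t$; then $n\sim\tfrac{27}{16}\cdot2^{t}$ gives $\mu\to\tfrac{729}{512}=1.42382\ldots$, so $\overline{\mu}(2)\thickapprox1.4238$ and $\ell_2(2t,2)\le n$.

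The genuinely delicate step is (b): keeping ``$2^m\ge p(\Hb_0)$'' (resp.\ ``$2^m+1\ge p(\Hb_0)$'') valid at \emph{every} iteration. The generic bound $p(\Hb_C)\le 2^{m+1}+1$ for $\mathrm{QM}_2^2$ is too weak for naive iteration with a fixed small $m$ — indeed, a volume count shows that for large $r$ an $[n,n-r]_22$ code with $n=27\cdot2^{r/2-4}-1$ cannot admit a $2$-partition into only two or three subsets — so one must either verify, via the explicit block structure of $\D$ (namely $\D_1(2)$ or $\D_2(2)$) and of the indicator set $\Bs$ in Construction QM, that the particular codes produced admit $2$-partitions of precisely the size required by the next step, or interleave partition-preserving $\mathrm{QM}_1^2$ steps with a single closing $\mathrm{QM}_2^2$ to create the ``$-1$''. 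Pinning down the seeds for part (i) and checking their $2$-partitions (by hand or a short computer search) is the only remaining ingredient; everything else is the recursion and the elementary estimate $\binom n2\sim n^2/2$.
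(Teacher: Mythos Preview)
The paper does not give its own proof of this theorem: it is stated as a known result and attributed entirely to \cite{DDL-ACCT3-1992,DDL-IEEE,CHLL-bookCovCod,GabDavTombR=2}, so there is no argument in the paper to compare against. What I can do is assess your sketch on its merits.

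Your plan for part~(ii) has a genuine gap. You propose to iterate $\mathrm{QM}_2^2$ with $m=1$ starting from the $[4,1]_22$ seed, using at every step a $2$-partition into $p(\Hb_0)=2$ subsets. But after the very first step the output is an $[9,4]_22$ code with $r=5$, and no such code admits a $2$-partition into two subsets: if the $9$ columns are split as $a$ and $9-a$, the vectors reachable as ``zero, one column, or a cross-sum'' number at most $1+9+a(9-a)\le 30<32=2^5$. Hence $p(\Hb)\ge 3$ after one step, the hypothesis $2^m=2\ge p(\Hb_0)$ of \eqref{eq4:CaseA2inp} fails, and $\mathrm{QM}_2^2$ with $m=1$ cannot be applied a second time. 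The two remedies you list do not rescue this case: ``verify that the particular codes admit smaller partitions'' is ruled out by the counting above, and interleaving with $\mathrm{QM}_1^2$ changes the recursion to $n\mapsto 2^m(n+2)-2$, which no longer closes to $5\cdot 2^{t-2}-1$. The family of \eqref{eq4:KnwFamR=2odd r} is in fact obtained in \cite{GabDavTombR=2} by a direct doubling construction (not through Theorem~\ref{th4:DDLR=2}); if you want to stay within the QM framework you must use varying $m$ and several different seeds (e.g.\ $[4,1]_22$, $[9,4]_22$ with an explicit partition of size $\le 4$, etc.), and then argue that every odd $r\ge 3$ is actually reached --- a nontrivial bookkeeping you have not carried out.

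For part~(i) your outline is in the right spirit (start from the $[26,18]_22$ code and apply $\mathrm{QM}_2^2$), and the density computations are fine, but the sketch is too vague to count as a proof: you would still need to exhibit the seed's $2$-partition, specify the sequence of $m$'s, and check that the inequalities $n_0\ge 2^m\ge p(\Hb_0)$ hold at every stage so that \emph{all} even $r\ge 8$ are covered.
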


\begin{theorem}\label{th4:KR code}\emph{Kaikkonen, Rosendahl\cite[p.\ 1812]{KaikRoseADSlike2003}}
Let $C_{KR}$ be the binary $[51,41]_2$ code with the parity check matrix
$\Hb_{KR}=[\Ib_{10}\Mb_{KR}]$, where $\Mb_{KR}$ is the $10\times 41$ binary matrix with the following columns in
hexadecimal notation:
\begin{align}\label{eq4:HKR}
&\Mb_{KR}=[\mathrm{1B6,193,1CC,187,1F6,F7,16E,140,3C,296,22F,303,381,365,}\db\\
&\mathrm{11D,1A3,274,2F2,254,56,F,41,357,208,34,329,28D,31D,3D5,129,3D7,}\notag\db\\
&\mathrm{B7,3EC,2E2,23C,AD,34E,155,2E6,371,D4}].\notag
\end{align}
Then $C_{KR}$ is a $[51,41]_22$ code with covering radius $2$.
\end{theorem}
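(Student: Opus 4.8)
The plan is to check directly the two conditions of Definition~\ref{Def11:CoverRad}(ii). First note that $\Hb_{KR}=[\Ib_{10}\,\Mb_{KR}]$ has $51$ columns and, because of the block $\Ib_{10}$, rank $10$, so it is indeed the parity check matrix of a $[51,41]_2$ code; only the value of the covering radius is at issue. With $r=51-41=10$, I must show: (a) every vector of $\F_2^{\,10}$, including $\ozero$, is a sum of at most two columns of $\Hb_{KR}$; and (b) this fails with ``two'' replaced by ``one''. In the language of Subsection~\ref{subsec13:CovCode=SatSet}, (a) says that the $51$ columns of $\Hb_{KR}$, read as points of $\PG(9,2)$, form a $1$-saturating set: every point of $\PG(9,2)$ is one of the $51$ points or is the third point on a secant joining two of them.

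Part (b) is a counting triviality: the sums of at most one column of $\Hb_{KR}$ yield at most $V_{1,51,2}=1+51=52$ distinct vectors, while $\#\F_2^{\,10}=2^{10}=1024$; hence $R(C_{KR})\ge2$. So all the work is in part~(a).

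For part~(a) I would carry out a finite verification, organised to exploit the form $\Hb_{KR}=[\Ib_{10}\,\Mb_{KR}]$. The columns of $\Ib_{10}$ are the ten unit vectors of $\F_2^{\,10}$, so together with their $\binom{10}{2}=45$ pairwise sums and with $\ozero$ they already realise all $1+10+45=56$ vectors of Hamming weight at most $2$. It therefore remains only to check that each of the $1024-56=968$ vectors of $\F_2^{\,10}$ of weight at least $3$ occurs among the sums that involve at least one column of $\Mb_{KR}$: the $41$ columns of $\Mb_{KR}$ themselves; their $10\cdot41=410$ single-coordinate flips (sums with a unit column); and the $\binom{41}{2}=820$ pairwise sums of columns of $\Mb_{KR}$. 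Concretely, one encodes each of these at most $41+410+820=1271$ vectors as an integer in $\{0,1,\dots,1023\}$ — the hexadecimal entries in \eqref{eq4:HKR} are already such encodings — forms the set of these values, and confirms that every weight-$\ge3$ integer in that range appears. Performing this check establishes $R(C_{KR})\le2$ and, with (b), $R(C_{KR})=2$.

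The real obstacle is simply that $C_{KR}$ is a sporadic, computer-found code with no apparent automorphism, so part~(a) does not reduce to a conceptual argument and must be settled by the exhaustive enumeration above; the decomposition $[\Ib_{10}\,\Mb_{KR}]$ only clears the low-weight syndromes for free and slightly shrinks the remaining search. One may add a consistency remark, though it is not a shortcut: since the number $1+51+\binom{51}{2}=1327$ of vectors of weight at most $2$ in $\F_2^{\,51}$ exceeds $1024$, two distinct such vectors must have the same syndrome under $\Hb_{KR}$, whence their sum is a nonzero codeword of $C_{KR}$ of weight at most $4$.
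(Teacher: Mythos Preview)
The paper does not prove this theorem at all: it is quoted as a known result from Kaikkonen and Rosendahl \cite{KaikRoseADSlike2003}, and no proof is supplied. Your proposal is therefore not to be compared with a proof in the paper but stands on its own as a verification of a cited fact.

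That said, your approach is correct and is essentially the only one available for a sporadic, computer-found code of this kind. The lower bound $R(C_{KR})\ge2$ is immediate from your counting in~(b), and the upper bound in~(a) is a finite check that you have organised sensibly: the block $\Ib_{10}$ disposes of all syndromes of weight $\le2$, and the remaining $968$ syndromes of weight $\ge3$ must be located among the at most $1271$ sums that involve at least one column of $\Mb_{KR}$. The only thing to add is that the ``proof'' is ultimately a report of a computation; you have described the computation accurately, but the theorem is established only once that enumeration is actually carried out (as it was in \cite{KaikRoseADSlike2003}). Your closing remark about the minimum distance is consistent with the paper, which records $d(C_{KR})=3$ in Theorem~\ref{th5:PartitionKR}(ii).
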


\section{New results on binary linear codes of covering radius 2}\label{sec5:R=2new}

\begin{theorem}\label{th5:-3}
$\mathbf{Construction~QM_3^2}$. In Construction \emph{QM} of Section $\ref{subsec32:QM}$, let the starting code $C_0$  be an $[n_0,n_0-r_0,3]_22$ code with a parity check matrix $\Hb_0$ of the form \eqref{eq32:H0}. Let $\Ps_0$ be a $2$-partition
of $\Hb_0$ into $p(\Hb_0)$ subsets. We define a new $[n,n-r,d_C]_2R_C$ code $C$ by a parity check
matrix $\Hb_C$ of the form \eqref{eq32:QM-H}--\eqref{eq32:varD2}, where the indicator set $\Bs$, parameter $m$, and the submatrix $\D$  are as follows
\begin{align}\label{eq5:-3inp}
&\emph{QM}_3^2:~\Bs\subseteq\{*\}\cup\F_{2^m}\setminus\{1\},~2^m\ge p(\Hb_0),~\D=\D_6.
\end{align}
Then the new code $C$ is a  binary linear $[n, n - r,3]_22$ code with the parameters:
\begin{equation}\label{eq5:-3res}
\emph{QM}_3^2:~R=2,~n=2^m(n_0+2)-3,~r = r_0 + 2m.
\end{equation}
\end{theorem}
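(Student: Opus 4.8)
The plan is to verify, by the column-sum criterion of Definition~\ref{Def11:CoverRad}(ii), that every column of $\F_2^{\,r}$ with $r = r_0 + 2m$ is a sum of at most $2$ columns of $\Hb_C$, where $\Hb_C$ is built from $\D_6$ and the blocks $\Ab(\hb_j,\beta_j)$ as in \eqref{eq32:QM-H}--\eqref{eq32:A*}. Write a target syndrome as $\sigma = (\ou;\,\ub_1;\,\ub_2)$ with $\ou\in\F_2^{\,r_0}$ and $\ub_1,\ub_2\in\F_2^{\,m}$ according to the three row-blocks of the construction; the novelty here (over $\mathrm{QM}_1^2$, $\mathrm{QM}_2^2$) is that the indicator $1$ is forbidden and the appended matrix is $\D_6$, which uses the Hamming matrix $\Wb_m$ split across the two lower $m$-blocks through the distinguished column $\wb$. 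First I would record how a single column of $\Ab(\hb_j,\beta_j)$ acts: for $\beta_j\in\F_{2^m}$ it contributes $(\hb_j;\,\xi;\,\beta_j\xi)$ for some $\xi\in\F_{2^m}$, and for $\beta_j=\ast$ it contributes $(\hb_j;\,\zb_m;\,\xi)$; a column of $\D_6$ contributes $(\zb_{r_0};\,\ast;\,\ast)$ of one of the three prescribed shapes. Because the top $r_0$ rows of $\D_6$ vanish, the $\ou$-part of $\sigma$ must be produced entirely by the $\Ab$-blocks, exactly as in the analysis of the basic Construction QM; so I would first solve the ``top part'' using the starting $2$-partition $\Ps_0$ (here $\ell_0=0$, since $d(C_0)=3>R$ forces $\ell_0=0$ by Lemma~\ref{Lem31:d&elll}), splitting into the cases $\ou=\zb$, $\ou=\hb_i$, and $\ou=\hb_i+\hb_j$ with $i,j$ in distinct subsets of $\Ps_0$.

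The core of the argument is then the case analysis on how many $\Ab$-columns we have already committed to, and using the remaining budget together with $\D_6$ to hit the lower syndrome parts $(\ub_1;\ub_2)$. When $\ou=\hb_i+\hb_j$, both columns are spent and we need $\xi,\xi'$ (with $\xi$ from block $i$, $\xi'$ from block $j$) so that $\beta_i\xi+\beta_j\xi'$ matches the relevant lower coordinates after the $\D_6$ contribution is fixed; but since no column of $\D_6$ is available once the budget is exhausted, this reduces to a solvability statement about $\beta_i\xi+\beta_j\xi'$ over $\F_{2^m}$, which is where $\beta_i\neq\beta_j$ (guaranteed by distinct subsets) is used, with the $\ast$-subcases handled separately via the $(\hb_j;\zb_m;\xi)$ shape. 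When $\ou=\hb_i$ (one column spent), the second column may be a column of $\D_6$ or another $\Ab$-column; here the interplay of $\Wb_m$, the column $\wb$, and the two lower $m$-blocks of $\D_6$ must cover all residual targets $(\ub_1;\ub_2)$ not already reachable — this is precisely the point where excluding the indicator value $1$ matters, since it ensures a certain linear system $\beta_i\xi = \ub_1,\ \xi = \ub_2$ or its $\D_6$-assisted variants has no degenerate coincidence. When $\ou=\zb$, either zero, one, or two columns remain free and we cover the lower parts using columns of $\D_6$ alone; this is a Hamming-covering-radius-$1$ style check for $\Wb_m$ together with the $\pm\wb$ corrections, and it accounts for the $-3$ in the length $n = 2^m(n_0+2)-3$: the three columns of $\D_6$ (the $\wb$ shared, plus the two $\Wb_m\setminus\wb$ blocks) versus the naive $2^m(n_0+2)$, after the $\theta_{m,2}=2^m-1$ bookkeeping.

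Finally I would confirm the stated side parameters: $n = 2^m(n_0+2)-3$ follows by counting columns, $n_0$ blocks of $2^m$ columns each, plus $\D_6$ which has $2(2^m-1)+1 = 2^{m+1}-1$ columns, plus the two ``$+2^m$'' coming from treating the forbidden/extra indicator slots as in $\mathrm{QM}_1^2$ — I would line this up with \eqref{eq4:CaseA1res} and adjust by the $\D_6$-vs-$\D_2(2)$ discrepancy to land exactly on $-3$. The redundancy $r = r_0 + 2m$ is immediate from the $2m$ appended rows ($R=2$), and $d_C = 3$ follows by exhibiting three dependent columns (inherited from the weight-$3$ dependency in $C_0$, lifted through an $\Ab$-block) and checking no two columns of $\Hb_C$ coincide — the latter uses $\xi_1=0\in\F_{2^m}$ and the distinctness of indicators across subsets of $\Ps_0$, together with $1\notin\Bs$ to avoid a clash between an $\Ab$-column and a $\D_6$-column. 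I expect the main obstacle to be the $\ou=\hb_i$ case: verifying that $\D_6$ together with a single free $\Ab$-column covers every residual lower syndrome, which is exactly the structural reason the indicator $1$ must be banned, and getting the boundary count to yield $-3$ rather than $-2$.
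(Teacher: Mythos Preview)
Your case split on $\pib\in\{\zb_{r_0},\,\hb_j,\,\hb_i+\hb_j\}$ is exactly the paper's approach, and you have correctly located the crux in the case $\pib=\hb_j$. Two concrete points need repair, however. First, the column count: $\Wb_m$ has $2^m-1$ columns, so $\Wb_m\setminus\wb$ has $2^m-2$, and $\D_6$ has $(2^m-2)+1+(2^m-2)=2^{m+1}-3$ columns; hence $n=2^m n_0+(2^{m+1}-3)=2^m(n_0+2)-3$ directly, with no phantom ``$+2^m$'' blocks. Second, $1\notin\Bs$ plays no role in $d_C=3$: columns of $\Ab$-blocks have nonzero top part $\hb_j$ while columns of $\D_6$ have $\zb_{r_0}$ on top, so they cannot collide.

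More importantly, the mechanism you sketch for $\pib=\hb_j$ (``a linear system $\beta_i\xi=\ub_1,\ \xi=\ub_2$'') is not the one that works. For $\beta_j\in\F_{2^m}^*$ one first tries
\[
\Ub=(\hb_j,\ub_1,\beta_j\ub_1)^{tr}+(\zb_{r_0},\zb_m,\beta_j\ub_1+\ub_2)^{tr},
\]
which succeeds unless the correction $\beta_j\ub_1+\ub_2$ equals the excluded column $\wb$ of the lower $\Wb_m\setminus\wb$ block. In that bad subcase one switches to $\xi=\beta_j^{-1}\ub_2$, giving
\[
\Ub=(\hb_j,\beta_j^{-1}\ub_2,\ub_2)^{tr}+(\zb_{r_0},\beta_j^{-1}\ub_2+\ub_1,\zb_m)^{tr};
\]
the new correction sits in the \emph{upper} $\Wb_m\setminus\wb$ block and equals $\beta_j^{-1}(\beta_j\ub_1+\ub_2)=\beta_j^{-1}\wb$, which avoids $\wb$ precisely because $\beta_j\ne1$. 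The subcases $\beta_j=0$ and $\beta_j=*$ are handled with the middle column $(\zb_{r_0},\wb,\wb)^{tr}$. For $\pib=\zb_{r_0}$ the paper invokes that the bottom $2m$ rows of $\D_6$ are the parity check matrix of the amalgamated direct sum of two $[2^m-1,2^m-1-m,3]_21$ Hamming codes, which has covering radius~$2$; your description is in the right spirit but you should name the ADS explicitly.
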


\begin{proof}
The values of $n,r$ and $d_C=3$ directly follow from Construction QM and \eqref{eq5:-3inp}.

We prove $R_C=2$, considering the possible representation as a sum of columns of $\Hb_C$ for an arbitrary column $\Ub=(\pib,\ub_1,\ub_2)^{tr}\in\F_2^{\,r}$, where $\pib\in\F_2^{\,r_0}$, $\ub_1,\ub_2\in\F_2^{\,m}$.
\begin{description}
  \item[(1)] $\pib=\hb_i+\hb_j$; $\hb_i$ and $\hb_j$ belong to distinct subsets of $\Ps_0$; $\beta_i\ne\beta_j$.

  If $\beta_i,\beta_j\ne *$, we find $\xb,\yb$ from the system $\xb+\yb=\ub_1$, $\beta_i \xb+\beta_j \yb=\ub_2$.
  Now $\Ub= (\hb_i,\xb,\beta_i \xb)^{tr} +(\hb_j,\yb,\beta_j \yb)^{tr}$. If $\beta_j=*$, $\Ub= (\hb_i,\ub_1,\beta_i\ub_1)^{tr} +(\hb_j, \zb_m,\beta_i\ub_1+\ub_2)^{tr}$.

  \item[(2)] $\pib=\hb_j$.

  Let $\beta_j\ne *$, $\beta_j\ub_1+\ub_2\ne \wb$. Then $\Ub=(\hb_j, \ub_1, \beta_j\ub_1)^{tr} +(\zb_{r_0+m}, \beta_j\ub_1+\ub_2)^{tr}$.

  Let $\beta_j\notin \{*,0$\};  $\beta_j\ub_1+\ub_2= \wb$. Then $\beta_j^{-1}\ub_2+\ub_1=\beta_j^{-1}\wb\ne\wb$, as $\beta_j\ne1$ by hypothesis. We have
   $\Ub=(\hb_j, \beta_j^{-1}\ub_2,\ub_2)^{tr}+(\zb_{r_0},\beta_j^{-1}\ub_2+\ub_1,\zb_m)^{tr}$.

   Let $\beta_j=0$, $\beta_j\ub_1+\ub_2= \wb$. Then $\ub_2=\wb$, $\Ub=(\hb_j,\ub_1+\wb,\zb_m)^{tr} +
  (\zb_{r_0}, \wb,\wb)^{tr}$.

   Let $\beta_j=*$, $\ub_1\ne\wb$. Then $\Ub=(\hb_j,\zb_m,\ub_2)^{tr}+(\zb_{r_0},\ub_1,\zb_m)^{tr}$.

   Let $\beta_j=*$, $\ub_1=\wb$. Then $\Ub=(\hb_j,\zb_m,\ub_2+\wb)^{tr}+(\zb_{r_0},\wb,\wb)^{tr}$.

  \item[(3)] $\pib=\zb_{r_0}$.

  The bottom $2m$ rows of $\D$ form a parity check matrix of the amalgamated direct sum (ADS), see  \cite[Section 4.1]{CHLL-bookCovCod}, \cite{GrahSlo}, of the two $[2^m-1,2^m-1-m,3]_21$ Hamming codes that gives a code of covering radius 2.
\end{description}

So, any column $\Ub \in\F_2^{\,r}$  can be represented as a column of $\Hb_C$   or a sum of two columns of $\Hb_C$. By Definition~\ref{Def11:CoverRad}(ii), the covering radius of the new code $C$ is $R_C=2$.
\end{proof}

\begin{theorem}\label{th5:PartitionKR}
Let the $[51,41]_22$ code $C_{KR}$ \emph{\cite{KaikRoseADSlike2003}} be given by the parity check matrix $\Hb_{KR}$, see Theorem \emph{\ref{th4:KR code}}. Let the columns of $\Hb_{KR}$ be numbered from left to right, i.e. $\Hb_{KR}=[\hb_1, \hb_2,\ldots,\hb_{51}]$, where $\hb_i\in\F_2^{10}$, $\hb_1=(10~0000~0000)^{tr}$, $\hb_2=(01~0000~0000)^{tr}$,\ldots,$\hb_{10}=(00~0000~0001)^{tr}$, $\hb_{11}=1B6=(01~1011~0110)^{tr}$,\ldots,$\hb_{51}=D4=(00~1101~0100)^{tr}$.
\begin{description}
  \item[(i)]
  Let the partition $\Ps_{KR}$ of the column set of $\Hb_{KR}$ into $11$ subsets be as follows:
  \begin{align}\label{eq5:PKR}
&   \Ps_{KR}\triangleq\{5,13,43\},\{20,27\},\{3,29,33,39,41,48,51\},\{1,7,19,25,34,45\},\db\\
&\{2,4,18\},\{6,8,12,26,28,35,44\},\{9,22,23,30\},\{10,11,15,16,32,42\},\notag\db\\
&\{14,24,49,50\},\{17,21,31,37,46,47\},\{36,38,40\},\notag
  \end{align}
where column numbers for every subset are written.

Then $\Ps_{KR}$ is a $2$-partition of $\Hb_{KR}$, i.e.  $p(\Hb_{KR};\Ps_{KR})=11$.

  \item[(ii)] The code $[51,41]_22$ $C_{KR}$ has minimum distance $d=3$. The columns $\hb_5$, $\hb_{27}$, $\hb_{29}$ of $\Hb_{KR}$, the sum of which is the zero column, belong to distinct subsets of $\Ps_{KR}$.
\end{description}
\end{theorem}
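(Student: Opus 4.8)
The plan is to verify the two claimed properties directly, treating this as a finite computation that must be organized so the reader can in principle check it. For part (ii), the minimum distance claim $d=3$ is settled by exhibiting a single dependent triple of columns: since $\Hb_{KR}$ contains $\Ib_{10}$, no two columns coincide and no column is zero, so $d\ge 3$; and one explicitly checks $\hb_5+\hb_{27}+\hb_{29}=\ozero$ (equivalently, reading the hexadecimal entries $1F6$, $303$, $296$ as binary $10$-vectors and adding mod $2$), which gives a weight-$3$ codeword, so $d=3$. That these three indices lie in three different blocks of $\Ps_{KR}$ is read off from \eqref{eq5:PKR}: $5$ is in the first block, $27$ in the second, $29$ in the third. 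This is the short part.

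The substance is part (i): showing $\Ps_{KR}$ is a $2$-partition, i.e.\ that every vector of $\F_2^{10}$ (including $\ozero$) is a sum of at most two columns of $\Hb_{KR}$ drawn from \emph{distinct} blocks. First I would note that the $2^{10}=1024$ target vectors split as: the zero vector (formally a sum of $0$ columns, allowed for $\ell=0$, though here one can also use $\hb_5+\hb_{27}+\hb_{29}$... no --- that is three columns; instead $\ozero$ is covered trivially as the empty sum since $R=2,\ell=0$), the $51$ columns themselves (each a sum of one column), and the remaining vectors, each of which must be written as $\hb_i+\hb_j$ with $i,j$ in distinct blocks. The natural counting check is that the number of cross-block pairs is $\binom{51}{2}-\sum_b\binom{|b|}{2}$; with block sizes $3,2,7,6,3,7,4,6,4,6,3$ (summing to $51$) this is $1275 - (3+1+21+15+3+21+6+15+6+15+3) = 1275 - 109 = 1166$, which exceeds $1024-1-51 = 972$, so a covering is numerically possible and one expects some redundancy. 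The actual verification is: for each of the $1024$ syndromes, find a representation; equivalently, form the multiset of all $1166$ cross-block sums $\hb_i+\hb_j$ together with the $51$ singletons and the zero vector, and confirm the resulting set equals $\F_2^{10}$. I would present this as a computer-verified claim, stating the algorithm explicitly (enumerate blocks, enumerate cross-block pairs, mark syndromes, check all $1024$ are marked), and perhaps record which syndromes are "tight" (covered by only one pair) as a sanity aid.

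The main obstacle is purely that this is an exhaustive check with no slick structural shortcut: the matrix $\Hb_{KR}$ of Kaikkonen--Rosendahl was found by computer search and has no exploitable algebraic symmetry, so the block structure $\Ps_{KR}$ cannot be certified by a clean counting or linear-algebra argument --- one genuinely has to confirm that the $1024$ cosets of the code are each hit. The only real risk in writing it up is a transcription error in the hexadecimal column list or in the block assignment, so I would double-check that each index $1,\dots,51$ appears in exactly one block of \eqref{eq5:PKR} (it does: $3+2+7+6+3+7+4+6+4+6+3 = 51$) before running the syndrome enumeration, and I would cross-check the three "distinguished" columns $\hb_5,\hb_{27},\hb_{29}$ against their hexadecimal values $1F6,303,296$ to confirm the weight-$3$ relation used in part (ii).
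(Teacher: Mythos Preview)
Your approach matches the paper's: the authors simply state that $\Ps_{KR}$ was found by computer search and then display the explicit sum $\hb_5+\hb_{27}+\hb_{29}=\zb_{10}$, so your plan of an exhaustive syndrome enumeration for part~(i) and a direct column-sum check for part~(ii) is exactly right, only more carefully spelled out.

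There is, however, a concrete indexing error in your proposal that you should fix before running anything. Since $\Hb_{KR}=[\Ib_{10}\,\Mb_{KR}]$, the first ten columns are standard basis vectors, so $\hb_5=(00\,0010\,0000)^{tr}$ is \emph{not} a hexadecimal entry of $\Mb_{KR}$. Columns $27$ and $29$ of $\Hb_{KR}$ are columns $17$ and $19$ of $\Mb_{KR}$, namely $274_{16}=(10\,0111\,0100)^{tr}$ and $254_{16}=(10\,0101\,0100)^{tr}$; these, together with $\hb_5$, do sum to zero. The triple you name, $1\mathrm{F}6,\,303,\,296$, sits at columns $15,\,22,\,20$ of $\Hb_{KR}$ and does \emph{not} sum to zero, so your stated cross-check would fail. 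A smaller point: the presence of $\Ib_{10}$ alone does not force $d\ge 3$; you still need that the $41$ columns of $\Mb_{KR}$ are nonzero, pairwise distinct, and distinct from the ten unit vectors, which is again a quick finite check.
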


\begin{proof}
  We obtained the 2-partition $\Ps_{KR}$ by computer search.
Also, by \eqref{eq4:HKR}, $\hb_5+\hb_{27}+\hb_{29}=(00~ 0010~0000)^{tr}+(10~ 0111~ 0100)^{tr}+(10~0101~ 0100)^{tr}=\zb_{10}$.
\end{proof}

We introduce the notations.
\begin{align}
&\Phi(r)=\Phi(2t)\triangleq26\cdot2^{r/2-4}-1= 26\cdot2^{t-4}-1 \T{ for }r=10,18,20,~r=2t\ge28.\label{eq5:notEvenR=2New1}\db\\
&\widehat{\Phi}(r)\triangleq26.5\cdot2^{r/2-4}-3= 26.5\cdot2^{t-5}-3 \T{ for }r=22,24,26.\label{eq5:notEvenR=2New2}
\end{align}

\begin{lemma}\label{lem5:Delta}
 Let $r=2t$. We have
 \begin{equation}\label{eq5:Delta}
  \phi(r)-\Phi(r)=2^{r/2-4}=2^{t-4}; ~\phi(r)-\widehat{\Phi}(r)=2^{r/2-5}+2=2^{t-5}+2.
 \end{equation}
\end{lemma}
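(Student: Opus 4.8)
The plan is to verify the two identities in \eqref{eq5:Delta} by direct substitution of the definitions \eqref{eq4:notatKnwR=2even}, \eqref{eq5:notEvenR=2New1}, and \eqref{eq5:notEvenR=2New2}, writing everything in terms of $t$ where $r=2t$. This is a purely computational lemma, so the ``proof'' is just the arithmetic; the only thing to be careful about is keeping the two normalizations $2^{r/2-4}=2^{t-4}$ and $2^{r/2-5}=2^{t-5}$ consistent across the $\phi$, $\Phi$, and $\widehat{\Phi}$ expressions, since $\widehat{\Phi}$ is written with a $2^{t-5}$ factor while $\phi$ and $\Phi$ use $2^{t-4}$.

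For the first identity, I would substitute $\phi(2t)=27\cdot2^{t-4}-1$ from \eqref{eq4:notatKnwR=2even} and $\Phi(2t)=26\cdot2^{t-4}-1$ from \eqref{eq5:notEvenR=2New1}, so that
\[
\phi(r)-\Phi(r)=\bigl(27\cdot2^{t-4}-1\bigr)-\bigl(26\cdot2^{t-4}-1\bigr)=2^{t-4}=2^{r/2-4}.
\]
For the second identity, I would substitute $\widehat{\Phi}(2t)=26.5\cdot2^{t-5}-3$ from \eqref{eq5:notEvenR=2New2} and rewrite $\phi(2t)=27\cdot2^{t-4}-1=54\cdot2^{t-5}-1$ to put both terms over the common factor $2^{t-5}$, obtaining
\[
\phi(r)-\widehat{\Phi}(r)=\bigl(54\cdot2^{t-5}-1\bigr)-\bigl(26.5\cdot2^{t-5}-3\bigr)=27.5\cdot2^{t-5}+2=55\cdot2^{t-6}+2,
\]
and then I would note $27.5\cdot2^{t-5}=55\cdot2^{t-6}=2^{t-5}+27\cdot2^{t-5}$, hmm — more cleanly, $27.5\cdot2^{t-5}=(1+26.5)\cdot2^{t-5}$, and since $26.5\cdot2^{t-5}=53\cdot2^{t-6}$, this is not obviously $2^{t-5}$. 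I should instead observe directly that $27.5\cdot 2^{t-5} = 2^{t-5} + 26.5\cdot 2^{t-5}$, which is still fractional-looking; the correct check is $27.5 \cdot 2^{t-5} = 55 \cdot 2^{t-6}$ and separately that the claimed value is $2^{t-5}+2$, so I must have mis-substituted. Re-examining: $54\cdot 2^{t-5} - 26.5\cdot 2^{t-5} = 27.5\cdot 2^{t-5}$, which equals $2^{t-5}$ only if $27.5=1$, false — so the intended reading must be that $\widehat\Phi(r)=26.5\cdot 2^{r/2-4}-3$ with the displayed $2^{t-5}$ being a typo for $2^{t-4}$, giving $\widehat\Phi(2t)=53\cdot 2^{t-5}-3$ and hence $\phi(r)-\widehat\Phi(r)=54\cdot2^{t-5}-1-53\cdot2^{t-5}+3=2^{t-5}+2=2^{r/2-5}+2$.

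The main (and only) obstacle is therefore not mathematical depth but resolving this notational ambiguity in \eqref{eq5:notEvenR=2New2}: one must read $\widehat{\Phi}(r)=26.5\cdot2^{r/2-4}-3$ as the primary definition, so that $26.5\cdot 2^{r/2-4}=53\cdot 2^{r/2-5}$, and then both displayed forms of $\Delta(r,2)$ in Theorem~\ref{th2:SporasNew} and in \eqref{eq5:Delta} are consistent. Once that reading is fixed, I would present the proof as the two two-line computations above, closing with a sentence noting that both differences are manifestly positive integers (for $t\ge 5$, which is the range where $\widehat\Phi$ is used), so the new bound $\ell_2(r,2)\le\widehat\Phi(r)$ indeed improves the known bound $\phi(r)$ by the stated amount.
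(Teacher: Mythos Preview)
Your proposal is correct and takes the same approach as the paper, which simply states that the assertions follow directly from \eqref{eq4:notatKnwR=2even}, \eqref{eq5:notEvenR=2New1}, \eqref{eq5:notEvenR=2New2}. You correctly diagnosed that the displayed ``$26.5\cdot2^{t-5}-3$'' in \eqref{eq5:notEvenR=2New2} is a typo for $26.5\cdot2^{t-4}-3$ (equivalently $53\cdot2^{t-5}-3$), as confirmed by the table values (e.g.\ $\widehat\Phi(22)=3389$) and by Theorem~\ref{th2:SporasNew}; with that reading both identities are one-line subtractions.
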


\begin{proof}
  The assertions follow directly from \eqref{eq4:notatKnwR=2even}, \eqref{eq5:notEvenR=2New1}, \eqref{eq5:notEvenR=2New2}.
\end{proof}

\begin{theorem}\label{th5:r=18,28}
$\mathbf{Constructions~QM_4^2,~QM_5^2}$.
Let the covering density $\mu$ be as in \eqref{eq11:CovDensity}, $\Phi(r)$ be as in \eqref{eq5:notEvenR=2New1}.
There exist two new $[n,n-r,3]_22$ codes $C$ with a parity check matrix $\Hb_C$ of the form \eqref{eq32:QM-H}--\eqref{eq32:varD} and parameters as in \eqref{eq5:r=18}, \eqref{eq5:r=28}.
\begin{align}
&\emph{\textbf{(i)}}~\emph{QM}_4^2:~R=2,~ r=18,~n=831=\Phi(18),~p(\Hb_C)\le2^5+1, ~\mu\thickapprox1.31873, \label{eq5:r=18}\db\\
&\ell_2(18,2)\le831=\Phi(18),~\Delta(18,2)=2^5.\notag\db\\
&\emph{\textbf{(ii)}}~\emph{QM}_5^2:~R=2,~r=28,~n=26623=\Phi(28),~p(\Hb_C)\le2^6+2,~ \mu\thickapprox1.32026, \label{eq5:r=28}\db\\
&\ell_2(28,2)\le26623=\Phi(28),~\Delta(28,2)=2^{10}\notag.
\end{align}
\end{theorem}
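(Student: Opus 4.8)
The plan is to build each of the two codes by a single application of Construction QM to a carefully chosen starting code, and then verify that the stated parameters come out. For part \textbf{(i)} (Construction $\mathrm{QM}_4^2$) I would take the Kaikkonen--Rosendahl $[51,41]_22$ code $C_{KR}$ as the starting code $C_0$, so $r_0=10$ and $n_0=51$. By Theorem \ref{th5:PartitionKR}(i) its parity check matrix $\Hb_{KR}$ admits the $2$-partition $\Ps_{KR}$ with $p(\Hb_{KR};\Ps_{KR})=11$. I would apply $\mathrm{QM}_2^2$ (equation \eqref{eq4:CaseA2inp}) with $m=4$: then $p(\Hb_0)=11\le 2^4=16\le 51=n_0$, so the hypotheses of \eqref{eq4:CaseA2inp} are met, and \eqref{eq4:CaseA2res} gives a $[n,n-r]_22$ code with $r=r_0+2m=10+8=18$, $n=2^m(n_0+1)-1=16\cdot52-1=831$, and $p(\Hb_C)\le 2^{m+1}+1=2^5+1$. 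Since $d=3$ for $C_{KR}$ (Theorem \ref{th5:PartitionKR}(ii)) and Construction QM with $\D=\D_1(2)$ does not introduce new short dependencies beyond what the partition controls, the new code has $d_C=3$; the covering density $\mu\approx 1.31873$ is then a direct evaluation of \eqref{eq11:CovDensity} with $n=831$, $n-k=18$, $R=2$. Finally $\ell_2(18,2)\le 831=\Phi(18)$ by \eqref{eq5:notEvenR=2New1}, and comparing with $\phi(18)=27\cdot2^{5}-1=863$ from Theorem \ref{th4:KnownFamR=2}(i) gives $\Delta(18,2)=\phi(18)-\Phi(18)=2^{5}=32$, as in Lemma \ref{lem5:Delta}.

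For part \textbf{(ii)} (Construction $\mathrm{QM}_5^2$) the idea is to iterate: use the $[831,813]_22$ code $C$ just constructed — or more precisely a code with its parameters — as a new starting code, so now $r_0=18$, $n_0=831$, $d=3$. I would need a $2$-partition of its parity check matrix into few subsets; since $\Hb_C$ was produced by $\mathrm{QM}_2^2$, the bound $p(\Hb_C)\le 2^{m+1}+1=33$ from \eqref{eq4:CaseA2res} is available. Then apply Construction QM again, this time with $m=5$ and $\D=\D_1(2)$ (i.e. $\mathrm{QM}_2^2$), checking $p(\Hb_0)\le 33\le 2^5=32$ — wait, $33>32$, so a bare $\mathrm{QM}_2^2$ step with $m=5$ does not fit; instead I would use the version $\mathrm{QM}_1^2$ of \eqref{eq4:CaseA1inp}, which only requires $2^m+1\ge p(\Hb_0)$, i.e. $33\ge 33$, with $\D=\D_2(2)$ and $N_m=2$ extra columns. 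By \eqref{eq4:CaseA1res} this yields $R=2$, $r=18+2\cdot5=28$, $n=2^m(n_0+2)-2=32\cdot833-2=26654$. This is slightly larger than the target $26623$, so the actual construction must instead re-derive a starting code of length $n_0$ one smaller at the previous level — that is, one uses Construction $\mathrm{QM}_3^2$ (Theorem \ref{th5:-3}) with $\D=\D_6$ at some stage, which produces length $n=2^m(n_0+2)-3$ rather than $2^m(n_0+2)-2$, shaving off the last unit; chaining an appropriate $\mathrm{QM}_3^2$ step at $m=4$ off $C_{KR}$ gives $n=16\cdot53-3=845$ — still not matching. The bookkeeping here is delicate, and pinning down exactly which chain of QM-versions and which $m$-values produce $n=26623$ with $p(\Hb_C)\le 2^6+2$ is the crux; I expect the correct route is one $\mathrm{QM}_2^2$ step ($r_0=10\to r=18$, $m=4$, $n=831$) followed by one $\mathrm{QM}_2^2$ step with $m=5$ after first exhibiting a $2$-partition of the intermediate code into at most $32$ subsets (improving on the generic bound $33$ by a direct argument on the structure of $\Hb_C$), giving $n=32\cdot(831+1)-1=26623$ and $p(\Hb_C)\le 2^6+1\le 2^6+2$.

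The verification that $R_C=2$ in each case is not re-proved from scratch: it is inherited from Theorems \ref{th4:DDLR=2} and \ref{th5:-3}, whose proofs already establish that if the starting code is a $2$-object with a $2$-partition into $p(\Hb_0)$ subsets satisfying the relevant inequality on $2^m$, then the output code has covering radius exactly $2$. So the only genuinely new content is (a) identifying valid starting codes and partitions at each stage — for which Theorem \ref{th5:PartitionKR} supplies the base case and the partition-counting bounds $p(\Hb_C)\le\cdots$ in \eqref{eq4:CaseA1res}--\eqref{eq4:CaseA2res} supply the inductive step — and (b) the arithmetic $n=2^m(n_0+\varepsilon)-\varepsilon$ with $\varepsilon\in\{1,2,3\}$ chosen to land on $831$ and $26623$. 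The distance claim $d_C=3$ holds throughout because every starting code has $d=3\le R$, and by Lemma \ref{Lem31:d&elll} the relevant codes are $2$-objects, while none of the QM-versions used creates a codeword of weight $1$ or $2$ (a weight-$1$ column would have to appear in $\D$, which has zero top rows, and a weight-$2$ relation is excluded by the $\beta_j$-distinctness across partition classes together with the Hamming-code structure of $\Wb_m$ in $\D$). The main obstacle, as indicated above, is purely combinatorial-arithmetic: choosing the precise sequence of Construction-QM versions and moduli $m$ so that the lengths come out to exactly $\Phi(18)$ and $\Phi(28)$ and the partition-size bounds come out to exactly $2^5+1$ and $2^6+2$; once that chain is fixed, every other assertion is an immediate citation of the results already in Sections \ref{sec3:qm concat}--\ref{sec4:R=2known} and of Lemma \ref{lem5:Delta}.
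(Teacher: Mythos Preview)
Your treatment of part \textbf{(i)} is essentially correct and matches the paper: starting from $C_{KR}$ with the $11$-subset partition $\Ps_{KR}$, one application of $\mathrm{QM}_2^2$ with $m=4$ lands on $n=831$, $r=18$, $p(\Hb_C)\le 2^5+1$. The paper writes this out as a separate construction $\mathrm{QM}_4^2$ and refines $\Ps_{KR}$ to a $16$-subset partition $\Ps_{KR}^\ast$ before applying QM, but this is bookkeeping so that each subset carries exactly one indicator from $\Bs=\F_{2^4}$; the substance is the same.

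Part \textbf{(ii)} has a genuine gap. You correctly isolate the obstruction: the intermediate $[831,813]_22$ code has $p(\Hb_0)\le 33$, which is one too many for $\mathrm{QM}_2^2$ with $m=5$, while $\mathrm{QM}_1^2$ costs too much in length. Your proposed fix---shrinking the partition to $32$ subsets---is not what works, and would in any case produce $p(\Hb_C)\le 2^6+1$, not the stated $2^6+2$. The actual construction $\mathrm{QM}_5^2$ is a genuinely new hybrid: take $\Bs=\F_{2^5}\cup\{\ast\}$ (so $33$ indicators, one per subset) but keep $\D=\D_1(2)$ (only $2^5-1$ extra columns). The $\ast$ is assigned to a specific singleton subset $\{\hb_1\}$ that was deliberately created in the partition of the $[831,\ldots]$ code so that $\hb_1=\hb_2+\hb_3$ with $\hb_2,\hb_3$ in two other singleton subsets carrying ordinary indicators; this is exactly why Theorem~\ref{th5:PartitionKR}(ii) records the dependence $\hb_5+\hb_{27}+\hb_{29}=\zb_{10}$ and why the refined partition $\Ps_{KR}^\ast$ isolates those three columns. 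In the covering-radius proof, the dangerous case $\pib=\hb_1$ (where $\beta_1=\ast$ and $\D_1(2)$ alone would not suffice) is handled by rewriting $\pib=\hb_2+\hb_3$ and reverting to the two-column case with $\beta_2,\beta_3\neq\ast$. This gives $n=2^5\cdot 831+2^5-1=26623$; the block $\Ab(\hb_1,\ast)$ is not split in the resulting partition, which accounts for the ``$+2$'' in $p(\Hb_C)\le 2^6+2$. None of this is a consequence of Theorems~\ref{th4:DDLR=2} or~\ref{th5:-3}, so the covering-radius argument must be (and in the paper is) redone case by case for $\mathrm{QM}_5^2$.
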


\begin{proof}
  \textbf{(i)} \textbf{Construction QM}$\mathbf{_4^2}$.  In Construction QM of Section $\ref{subsec32:QM}$, let the starting code $C_0$  be the $[51,41,3]_22$ code $C_{KR}$ with the parity check matrix $\Hb_{KR}=\Hb_0$, see Theorems \ref{th4:KR code}, \ref{th5:PartitionKR}. So, $n_0=51$, $r_0=10$. We take $m=4$ and obtain a 2-partition $\Ps_0=\Ps_{KR}^*$ of $16=2^m$ subsets, partitioning the first three
subsets of $\Ps_{KR}$ \eqref{eq5:PKR}:
\begin{align}\label{eq5:PKR*}
&   \Ps_{KR}^*\triangleq\{5\},\{27\},\{29\},\{13\},\{43\},\{20\},\{3\},\{33,39,41,48,51\},\\
&\{1,7,19,25,34,45\},\{2,4,18\},\{6,8,12,26,28,35,44\},\{9,22,23,30\},\notag\\
&\{10,11,15,16,32,42\},\{14,24,49,50\},\{17,21,31,37,46,47\},\{36,38,40\}.\notag
  \end{align}
 We put the indicator set $\Bs=\F_{2^m}$  and assign to all columns of every subset of $\Ps_{KR}^*$ the same indicator different from the indicators of the other subsets.
 Finally, we take $\D=\D_1(2)$ and obtain the matrix $\Hb_C$ of the form \eqref{eq32:QM-H}--\eqref{eq32:varD}, which defines the new $[n,n-r,d_C]_2R_C$ code $C$ with $n=2^4\cdot51+2^4-1=831=\Phi(18)$, $r=10+2\cdot4=18$.

 The minimum distance $d_C=3$ follows from the submatrix $\D$.

  To prove $R_C=2$ we consider the possible representations as a sum of columns of $\Hb_C$ for an arbitrary column $\Ub=(\pib,\ub_1,\ub_2)^{tr}\in\F_2^{\,r}$, where $\pib\in\F_2^{\,r_0},\ub_1,\ub_2\in\F_2^{\,m},r_o=10,m=4$.
\begin{description}
  \item[(1)] $\pib=\hb_i+\hb_j$; $\hb_i$ and $\hb_j$ belong to distinct subsets of $\Ps_0$; $\beta_i\ne\beta_j$.

   We find $\xb$ and $\yb$ solving the linear system $\xb+ \yb=\ub_1$, $\beta_i \xb+\beta_j \yb=\ub_2$.
  Now $\Ub= (\hb_i,\xb,\beta_i \xb)^{tr} +(\hb_j,\yb,\beta_j \yb)^{tr}$.

  \item[(2)] $\pib=\hb_j$.

  $\Ub=(\hb_j, \ub_1, \beta_j\ub_1)^{tr} +(\zb_{r_0+m}, \beta_j\ub_1+\ub_2)^{tr}$, where $(\zb_{r_0+m}, \beta_j\ub_1+\ub_2)^{tr}\in\D$.

  \item[(3)] $\pib=\zb_{r_0}$.

  Let $\ub_1\ne0$. As $\Bs=\F_{2^m}$, indicator $\beta_k = \ub_2/\ub_1$
 exists; $\Ub=(\hb_k, \zb_m, \zb_m)^{tr} +(\hb_k, \ub_1, \beta_k\ub_1)^{tr}$. If $\ub_1=0$, then $\Ub=(\zb_{r_0+m}, \ub_2)^{tr}\in\D$.
\end{description}
 So, any column $\Ub \in\F_2^{\,r}$  can be represented as a column of $\Hb_C$   or a sum of two columns of $\Hb_C$. By Definition~\ref{Def11:CoverRad}(ii), the covering radius of the new code $C$ is $R_C=2$ that implies new upper bound on the length function $\ell_2(18,2)\le 831=\Phi(18)$. By \eqref{eq4:KnwFamR=2even r} and \eqref{eq5:Delta},  the decrease $\Delta(18,2)=2^{18/2-4}=2^5$.

Now, we construct the $2$-partition $\Ps_{C}$ of $2^{m+1}+1=2^5+1$ subsets.

We obtain $2^{m}=16$ interim subsets, partitioning the columns of $\Hb_C$ (except $\D$) so that if columns $\hb_i, \hb_j$ of $\Hb_0=\Hb_{KR}$ belong to distinct subsets of
$\Ps_{KR}^*$ \eqref{eq5:PKR*}, then the columns of submatrices $\Ab(\hb_i,\beta_i)$, $\Ab(\hb_j,\beta_j)$ belong to distinct subsets
of the partition of $\Hb_C$.
Then we obtain $2^{m+1}=2^5$ subsets of $\Ps_{C}$ partitioning every interim subset into two
subsets such that the first one contains all columns of the
form $(\hb_\bullet, \zb_4, \zb_4)^{tr}$, where $\xi_1=0$ is used, and the second one contains the rest of the
columns. This is connected with the case  $\pib=\zb_{r_0}$, $\ub_1\ne0$. The  $(2^5+1)$-th subset consists of the columns of $\D$.

By above, including Theorem \ref{th5:PartitionKR}(ii) and \eqref{eq5:PKR*}, there are subsets of $\Ps_{C}$ consisting of one column such that
\begin{align}\label{eq5:3subsets}
   &\{(\hb_5,\zb_8)^{tr}\},~\{(\hb_{27},\zb_8)^{tr}\},~\{(\hb_{29},\zb_8)^{tr}\};~
   (\hb_5,\zb_8)^{tr}=(\hb_{27},\zb_8)^{tr}+(\hb_{29},\zb_8)^{tr}.
\end{align}

 \textbf{(ii)}  \textbf{Construction QM}$\mathbf{_5^2}$.  In Construction QM of Section $\ref{subsec32:QM}$,
 let the starting code $C_0$  be the $[831,813,3]_22$ code obtained in the case (i). So, $n_0=831$, $r_0=18$. The parity check matrix $\Hb_C$ and the 2-partition $\Ps_C$ from the case~(i) are the parity check $18\times 831$ matrix $\Hb_0$ and the 2-partition $\Ps_0$ of $2^5+1$ subsets (including ones from \eqref{eq5:3subsets}) for the case (ii).  For the case (ii), we have, see \eqref{eq5:3subsets}:
 \begin{align}\label{eq5:Hb_0}
 &\Hb_0 =[\hb_1\hb_2\hb_3\ldots\hb_{831}],~\hb_i\in\F_2^{~18},~\hb_1=(00~0010~0000~0000~0000)^{tr},\db\\
  & \hb_2=(10~0111~0100~0000~0000)^{tr},~\hb_3=(10~0101~0100~0000~0000)^{tr};\notag\db\\
  & \hb_1=\hb_2+\hb_3; ~\Ps_0 =\{\hb_1\},\{\hb_2\},\{\hb_3\},\ldots,\{\T{columns of }\D\}.\notag
 \end{align}

We take $m=5$. We put the indicator set $\Bs=\F_{2^m}\cup\{*\}$  and assign to all columns of every subset of $\Ps_0$ the same indicator other than the indicators of other subsets; in that, we put $\beta_1=*$. Finally, we take $\D=\D_1(2)$ and obtain the matrix $\Hb_C$ of the form \eqref{eq32:QM-H}--\eqref{eq32:varD}, which defines the new $[n,n-r,d_C]_2R_C$ code $C$ with $n=2^5\cdot831+2^5-1=26623=\Phi(28)$, $r=18+2\cdot5=28$.

 The minimum distance $d_C=3$ follows from $\D$.
  To prove $R_C=2$ we consider an arbitrary column $\Ub=(\pib,\ub_1,\ub_2)^{tr}\in\F_2^{\,r}$, $\pib\in\F_2^{\,r_0},\ub_1,\ub_2\in\F_2^{\,m},r_o=18,m=5$.
\begin{description}
  \item[($1'$)] $\pib=\hb_i+\hb_j$; $\hb_i$ and $\hb_j$ belong to distinct subsets of $\Ps_0$; $\beta_i\ne\beta_j$.

    Let $\beta_i,\beta_j\ne *$. We find $\xb$ and $\yb$ from the linear system $\xb+ \yb=\ub_1$, $\beta_i \xb+\beta_j \yb=\ub_2$.
  Now $\Ub= (\hb_i,\xb,\beta_i \xb)^{tr} +(\hb_j,\yb,\beta_j \yb)^{tr}$.

  Let $j=1$ that implies $\beta_j=*$. Then $\Ub= (\hb_i,\ub_1,\beta_i\ub_1)^{tr} +(\hb_1, \zb_m,\beta_i\ub_1+\ub_2)^{tr}$.

  \item[($2'$)] $\pib=\hb_j$.

  If $j\ne1$, $\beta_j\ne *$, then $\Ub=(\hb_j, \ub_1, \beta_j\ub_1)^{tr} +(\zb_{r_0+m}, \beta_j\ub_1+\ub_2)^{tr}$ with $(\zb_{r_0+m}, \beta_j\ub_1+\ub_2)^{tr}\in\D$. If $j=1$, then, by \eqref{eq5:Hb_0}, $\pib=\hb_2+\hb_3$ and we return to the case ($1'$).

  \item[($3'$)] $\pib=\zb_{r_0}$.

  Let $\ub_1\ne0$. As $\Bs=\F_{2^m}\cup\{*\}$, indicator $\beta_k = \ub_2/\ub_1\ne*$
 exists. We have $\Ub=(\hb_k, \zb_m, \zb_m)^{tr} +(\hb_k, \ub_1, \beta_k\ub_1)^{tr}$.
If $\ub_1=0$, then $\Ub=(\zb_{r_0+m}, \zb_m,\ub_2)^{tr}\in\D$.
\end{description}
So, any column $\Ub \in\F_2^{\,r}$  can be represented as a column of $\Hb_C$ or a sum of two columns of $\Hb_C$. By Definition~\ref{Def11:CoverRad}(ii), the covering radius of the new code $C$ is $R_C=2$, that implies new upper bound on the length function $\ell_2(28,2)\le26623=\Phi(28)$. By \eqref{eq4:KnwFamR=2even r} and \eqref{eq5:Delta}, the decrease $\Delta(28,2)=2^{28/2-4}=2^{10}$.

Now, we construct the $2$-partition $\Ps_C$ of $2^{m+1}+2=2^6+2$ subsets.
We obtain $2^{m}+1=33$ interim subsets, partitioning the columns of $\Hb_C$ (except $\D$) so that if columns $\hb_i, \hb_j$ of $\Hb_0$ belong to distinct subsets of
$\Ps_0$ \eqref{eq5:Hb_0}, then the columns of submatrices $\Ab(\hb_i,\beta_i)$, $\Ab(\hb_j,\beta_j)$ belong to distinct subsets
of the partition of $\Hb_C$.
Then we obtain $2^{m+1}=2^6$ subsets of $\Ps_{C}$ partitioning every interim subset (except $\{\Ab(\hb_1,*)\}$) into two
subsets such that the first one contains all columns of the
form $(\hb_\bullet, \zb_5, \zb_5)^{tr}$, where $\xi_1=0$ is used, and the second one contains the rest of the
columns. This is connected with the case  $\pib=\zb_{r_0}$, $\ub_1\ne0$, in which the columns of $\{\Ab(\hb_1,*)\}$ do not appear; these columns form $(2^6+1)$-th subset. The  $(2^6+2)$-th subset consists of the columns of~$\D$.
\end{proof}

\begin{theorem}\label{th5:-3r=22,24,26}
Let $\widehat{\Phi}(r)$ be as in \eqref{eq5:notEvenR=2New2}.
There are $3$ new  $[n,n-r,3]_22$ codes $C$:
\begin{equation}\label{eq5:r=22,24,26}
R=2,~r=22,24,26,~n=\widehat{\Phi}(r),~\ell_2(r,2)\le\widehat{\Phi}(r),~\Delta(r,2)=2^{r/2-5}+2.
\end{equation}
\end{theorem}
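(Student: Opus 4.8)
The plan is to obtain the three codes by a single application of Construction QM$_3^2$ (Theorem \ref{th5:-3}), taking as the starting code the Kaikkonen--Rosendahl $[51,41,3]_22$ code $C_{KR}$ of Theorem \ref{th4:KR code}. By Theorem \ref{th5:PartitionKR}, $C_{KR}$ has minimum distance $d=3$ and its parity check matrix $\Hb_{KR}$ admits the $2$-partition $\Ps_{KR}$ into $p(\Hb_{KR})=11$ subsets, so $C_{KR}$ satisfies the hypotheses imposed on the starting code in Theorem \ref{th5:-3}. Thus in the notation of Section \ref{subsec32:QM} I would set $\Hb_0=\Hb_{KR}$, $n_0=51$, $r_0=10$, $p(\Hb_0)=11$, and run the construction for the three parameters $m=6,7,8$.

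For each such $m$ one checks the input conditions \eqref{eq5:-3inp}: $2^m\ge 2^6=64>11=p(\Hb_0)$, and the admissible indicator pool $\{*\}\cup\F_{2^m}\setminus\{1\}$ has $2^m\ge 11$ elements, so one can choose an indicator set $\Bs\subseteq\{*\}\cup\F_{2^m}\setminus\{1\}$ assigning pairwise distinct indicators to the $11$ subsets of $\Ps_{KR}$; finally take $\D=\D_6$. Theorem \ref{th5:-3} then produces a binary linear $[n,n-r,3]_22$ code with
\[
r=r_0+2m=10+2m,\qquad n=2^m(n_0+2)-3=53\cdot2^m-3 .
\]
Since $r=10+2m$ gives $r/2-4=m+1$, we have $53\cdot2^m-3=26.5\cdot2^{r/2-4}-3=\widehat{\Phi}(r)$ by \eqref{eq5:notEvenR=2New2}. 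Instantiating $m=6,7,8$ yields the three codes with $(r,n)=(22,3389),(24,6781),(26,13565)$, whence $\ell_2(r,2)\le\widehat{\Phi}(r)$ for $r=22,24,26$.

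It then only remains to record the decrease: the best previously known upper bound for even codimension is $\ell_2(2t,2)\le\phi(2t)$ from Theorem \ref{th4:KnownFamR=2}(i), and Lemma \ref{lem5:Delta} gives $\phi(r)-\widehat{\Phi}(r)=2^{r/2-5}+2$, so $\Delta(r,2)=2^{r/2-5}+2$, which is exactly \eqref{eq5:r=22,24,26}.

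There is no real analytic difficulty here: the two substantive facts — that $C_{KR}$ carries a $2$-partition into only $11$ subsets, and that substituting $\D=\D_6$ preserves covering radius $2$ while shortening the code by $3$ — are already established in Theorems \ref{th5:PartitionKR} and \ref{th5:-3}. The only points deserving a line of care are the inequality $2^m\ge p(\Hb_0)$ and the size of the indicator pool (both immediate since $p(\Hb_0)=11$ is so small) and the arithmetic identity $53\cdot2^m-3=\widehat{\Phi}(10+2m)$. So the ``hard part'' is essentially bookkeeping: verifying that $m=6,7,8$ fill in precisely the codimensions $r=22,24,26$ that are absent from the infinite family \eqref{eq2:newR2} of Theorem \ref{th5:r=18,28}.
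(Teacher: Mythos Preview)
Your proposal is correct and follows essentially the same approach as the paper: apply Construction QM$_3^2$ to the $[51,41,3]_22$ code $C_{KR}$ with its $2$-partition $\Ps_{KR}$ into $11$ subsets, take $m=6,7,8$, and read off $n=\widehat{\Phi}(r)$ and $\Delta(r,2)$ from \eqref{eq5:-3res}, \eqref{eq5:notEvenR=2New2}, and Lemma \ref{lem5:Delta}. Your write-up simply spells out the hypothesis checks and the arithmetic more explicitly than the paper does.
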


\begin{proof}
  We apply Construction QM$_3^2$ of Theorem \ref{th5:-3}.  Let the starting code $C_0$  be the $[51,41,3]_22$ code $C_{KR}$ with the parity check matrix $\Hb_{KR}=\Hb_0$, see Theorems \ref{th4:KR code}, \ref{th5:PartitionKR}. So, $n_0=51$, $r_0=10$, $\Ps_0=\Ps_{KR}$, $p(\Hb_0;\Ps_0)=11$. We take $m=6,7,8$ and obtain the needed codes, see \eqref{eq11:CovDensity}, \eqref{eq5:-3res}, \eqref{eq5:notEvenR=2New2}. The decrease  $\Delta(r,2)$ follows from \eqref{eq4:KnwFamR=2even r}, \eqref{eq5:Delta}.
\end{proof}

In Tables \ref{tab5:LengFun r32R=2} and \ref{tab5:LengFun33r64R=2}, the parameters $n,r$, and covering density \eqref{eq11:CovDensity} of the best (as far as the authors know) binary linear covering $[n,n-r]_22$ codes of covering radius $R=2$ and redundancy (codimension) $2\le r\le64$  are written. The values of $n$ give an upper bound on the length function so that $\ell_2(r,2)\le n$; the star * notes the exact value of $\ell_2(r,2)$. The known result of \cite{KaikRoseADSlike2003} that is not presented in the book \cite {CHLL-bookCovCod} is noted by $\bullet$. The new results, obtained in this paper using Theorems \ref{th5:-3}, \ref{th5:PartitionKR}, \ref{th5:r=18,28}--\ref{th5:n0Psi}, and Lemma \ref{lem5:Delta}, are written in bold font and also are noted by the big star $\bigstar$.

\begin{table}[htbp]
\caption{The parameters of the best binary linear $[n,n-r]_22$ covering codes, $r\le32$,
such that $\ell_2(r,2)\le n$; the values of $n$ follow from $\phi(r),f(r)$ \eqref{eq4:notatKnwR=2even}--\eqref{eq4:KnwFamR=2odd r} (the known values) and $\Phi(r),\widehat{\Phi}(r)$ \eqref{eq5:notEvenR=2New1}--\eqref{eq5:r=28}, \eqref{eq5:r=22,24,26}, \eqref{eq5:QM1ares} (the new ones, bold font); $\Delta=\Delta(r,2)$}
 \centering
  \begin{tabular}
  {r|c|r|c|l|c|c||c|c|c}\hline
$r$&for $n$ &\multicolumn{1}{c|}{$n$}&references&density&$p(\Hb)$&$\Delta$&$n_0$&$m$&Th.\\\hline
2 &&$2^{*}$&\cite{CHLL-bookCovCod,GrahSlo}&1      &    &&&\\
3 &$f(3)$&$4^{*}$&\cite{CHLL-bookCovCod,GrahSlo}&1.37500& &&&\\
4 &&$5^{*}$&\cite{CHLL-bookCovCod,GrahSlo}&1      & &&&\\
5 &$f(5)$&$9^{*}$&\cite{CHLL-bookCovCod,GrahSlo}&1.43750&    &&&\\
6 &&$13^{*}$&\cite{CHLL-bookCovCod,GrahSlo,Struik1994R=2-3Bin}&1.43750&    &&&\\
7 &$f(7)$&$19^{*}$ &\cite{DDL-IEEE,GabDavTombR=2,GrahSlo,Struik1994R=2-3Bin}&1.49219&&&&\\\hline
8 &$\phi(8)$&26  &\cite{BrPl1990,DDL-IEEE,GabDavTombR=2}&1.37500&&&&\\
9 &$f(9)$&39     &\cite{GabDavTombR=2}    &1.52539  &&&&\\
10&$\Phi(10)$&    $\bullet51$        &\cite{KaikRoseADSlike2003}   &1.29590&$\textbf{11}\bigstar$&&&&\ref{th5:PartitionKR}\\
11&$f(11)$& 79    &\cite{DDL-IEEE,GabDavTombR=2}&1.54346& &       &&\\
12&$\phi(12)$&107& \cite{DDL-IEEE}&1.41089&&&&\\
13&$f(13)$&159   & \cite{GabDavTombR=2}    &1.55286&&        &&\\
14&$\phi(14)$&215& \cite{DDL-IEEE}&1.41730&&&&\\
15&$f(15)$&319    &\cite{DDL-IEEE,GabDavTombR=2}&1.55765&&          &&\\
16&$\phi(16)$&431 &\cite{DDL-IEEE}&1.42055&&&&\\
17&$f(17)$&639    &\cite{GabDavTombR=2}&1.56007&&            &&\\\hline
18&$\mathbf{\Phi(18)}$&\textbf{831}&$\bigstar$&\textbf{1.31873}&$\mathbf{2^5+1}$&$\mathbf{2^{5}}$&51&4&\ref{th5:r=18,28} \\
19&$f(19)$&1279 &\cite{DDL-IEEE,GabDavTombR=2}&1.56128&&&&&\\

20&$\mathbf{\Phi(20)}$&\textbf{1663}&$\bigstar$&\textbf{1.31952}&$\mathbf{2^6+1}$&$\mathbf{2^{6}}$&51&5&\ref{th5:n0Psi}\\

21&$f(21)$& 2559 &\cite{DDL-IEEE,GabDavTombR=2}&1.56189&  &&&\\
22&$\mathbf{\widehat{\Phi}(22)}$&\textbf{3389}&$\bigstar$&\textbf{1.36956}&&$\textbf{66}$&51&6&\ref{th5:-3} \\
23&$f(23)$& 5119 &\cite{DDL-IEEE,GabDavTombR=2}&1,56219&  &&&\\
24&$\mathbf{\widehat{\Phi}(24)}$&\textbf{6781}&$\bigstar$&\textbf{1.37057}&&$\textbf{130}$&51&7&\ref{th5:-3} \\
25&$f(25)$&10239 &\cite{GabDavTombR=2}&1.56235&& &&     \\
26&$\mathbf{\widehat{\Phi}(26)}$&\textbf{13565} &$\bigstar$&\textbf{1.37107}&&$\textbf{258}$&51&8&\ref{th5:-3}\\
27&$f(27)$&20479&\cite{DDL-IEEE,GabDavTombR=2}&1.56242&&&&\\
28&$\mathbf{\Phi(28)}$&\textbf{26623}&$\bigstar$ &\textbf{1.32026}&$\mathbf{2^6+2}$&$\mathbf{2^{10}}$&$\Phi(18)$&5&\ref{th5:r=18,28}\\
29&$f(29)$&40959&\cite{DDL-IEEE,GabDavTombR=2}&1.56246&&&&\\
30&$\mathbf{\Phi(30)}$&\textbf{53247}&$\bigstar$ &\textbf{1.32029}&$\mathbf{2^7+1}$&$\mathbf{2^{11}}$&$\Phi(18)$&6&\ref{th5:n0Psi}\\
31&$f(31)$&81919&\cite{DDL-IEEE,GabDavTombR=2}&1.56248&&&&\\
32&$\mathbf{\Phi(32)}$&\textbf{106495}&$\bigstar$&\textbf{1.32030}&$\mathbf{2^8+1}$&$\mathbf{2^{12}}$&$\Phi(18)$&7&\ref{th5:n0Psi}\\\hline
 \end{tabular}
\label{tab5:LengFun r32R=2}
\end{table}

\begin{table}[htbp]
\caption{The parameters of the best binary linear $[n,n-r]_22$ covering codes,
$33\le r\le64$, such that $\ell_2(r,2)\le n$; the values of $n$ follow from $f(r)$ \eqref{eq4:notatKnwR=2odd}, \eqref{eq4:KnwFamR=2odd r} (the known values) and
$\Phi(r)$ \eqref{eq5:notEvenR=2New1}, \eqref{eq5:QM1ares} (the new ones, bold font); $\Delta=\Delta(r,2)$}
 \centering
  \begin{tabular}
{r|c|r|c|r|c|c||c|r}\hline
$r$&for $n$&\multicolumn{1}{c|}{$n$}&refer.&density&$p(\Hb)$&$\Delta$&$n_0$&$m$\\\hline
33&$f(33)$&163839   &\cite{GabDavTombR=2}&1.56249&&&\\
34&$\mathbf{\Phi(34)}$&\textbf{212991}&$\bigstar$&\textbf{1.32031}&$\mathbf{2^8+1}$&$\mathbf{2^{13}}$&$\Phi(20)$&7\\
35&$f(35)$&327679   &\cite{DDL-IEEE,GabDavTombR=2}&1.56250&&&\\
36&$\mathbf{\Phi(36)}$&\textbf{425983}&$\bigstar$&\textbf{1.32031}&$\mathbf{2^9+1}$&$\mathbf{2^{14}}$&$\Phi(20)$&8\\
37&$f(37)$&655359   &\cite{DDL-IEEE,GabDavTombR=2}&1.56250&&&\\
38&$\mathbf{\Phi(38)}$&\textbf{851967}&$\bigstar$&\textbf{1.32031}&$\mathbf{2^{10}+1}$&$\mathbf{2^{15}}$&$\Phi(20)$&9\\
39&$f(39)$&1310719  &\cite{DDL-IEEE,GabDavTombR=2}&1.56250&&&\\
40&$\mathbf{\Phi(40)}$&\textbf{1703935}&$\bigstar$&\textbf{1.32031}&$\mathbf{2^{11}+1}$&$\mathbf{2^{16}}$&$\Phi(20)$&10\\
41&$f(41)$&  2621439&\cite{DDL-IEEE,GabDavTombR=2}&1.56250&&&\\
42&$\mathbf{\Phi(42)}$&\textbf{3407871}&$\bigstar$&\textbf{1.32031}&$\mathbf{2^8+1}$&$\mathbf{2^{17}}$&$\Phi(28)$&7\\
43&$f(43)$& 5242879&\cite{DDL-IEEE,GabDavTombR=2}&1.56250&&&\\
44&$\mathbf{\Phi(44)}$&\textbf{6815743}&$\bigstar$&\textbf{1.32031}&$\mathbf{2^9+1}$&$\mathbf{2^{18}}$&$\Phi(28)$&8\\
45&$f(45)$& 10485759&\cite{DDL-IEEE,GabDavTombR=2}&1,56250&&&\\
46&$\mathbf{\Phi(                                                                                     46)}$&\textbf{13631487}&$\bigstar$ &\textbf{1.32031}&$\mathbf{2^{9}+1}$&$\mathbf{2^{19}}$&$\Phi(30)$&8\\
47&$f(47)$& 20971519&\cite{DDL-IEEE,GabDavTombR=2}&1.56250&&&\\
48&$\mathbf{\Phi(48)}$&\textbf{27262975}&$\bigstar$ &\textbf{1.32031}&$\mathbf{2^{10}+1}$&$\mathbf{2^{20}}$&$\Phi(30)$&9\\
49&$ f(49)$&41943039&\cite{DDL-IEEE,GabDavTombR=2}&1.56250&&&\\
50&$\mathbf{\Phi(50)}$&\textbf{54525951}&$\bigstar$ &\textbf{1.32031}&$\mathbf{2^{10}+1}$&$\mathbf{2^{21}}$&$\Phi(32)$&9\\
51&$f(51)$& 83886079&\cite{DDL-IEEE,GabDavTombR=2}&1.56250&&&\\
52&$\mathbf{\Phi(52)}$&\textbf{109051903}&$\bigstar$&\textbf{1.32031}&$\mathbf{2^{11}+1}$&$\mathbf{2^{22}}$&$\Phi(32)$&10\\

53&$f(53)$&167772159&\cite{DDL-IEEE,GabDavTombR=2}&1.56250&&&\\
54&$\mathbf{\Phi(54)}$&\textbf{218103807}&$\bigstar$&\textbf{1.32031}&$\mathbf{2^{11}+1}$&$\mathbf{2^{23}}$&$\Phi(34)$&10\\
55&$f(55)$&335544319&\cite{DDL-IEEE,GabDavTombR=2}&1.56250&&&\\
56&$\mathbf{\Phi(56)}$&\textbf{436207615}&$\bigstar$&\textbf{1.32031}&$\mathbf{2^{12}+1}$&$\mathbf{2^{24}}$&$\Phi(34)$&11\\

57&$f(57)$&671088639&\cite{DDL-IEEE,GabDavTombR=2}&1.56250&&&\\
58&$\mathbf{\Phi(58)}$&\textbf{872415231}&$\bigstar$&\textbf{1.32031}&$\mathbf{2^{12}+1}$&$\mathbf{2^{25}}$&$\Phi(36)$&11\\
59&$f(59)$&1342177279&\cite{DDL-IEEE,GabDavTombR=2}&1.56250&&&\\
60&$\mathbf{\Phi(60)}$&\textbf{1744830463}&$\bigstar$&\textbf{1.32031}&$\mathbf{2^{13}+1}$&$\mathbf{2^{26}}$&$\Phi(36)$&12\\

61&$f(61)$&2684354559&\cite{DDL-IEEE,GabDavTombR=2}&1.56250&&&\\
62&$\mathbf{\Phi(62)}$&\textbf{3489660927}&$\bigstar$&\textbf{1.32031}&$\mathbf{2^{13}+1}$&$\mathbf{2^{27}}$&$\Phi(38)$&12\\

63&$f(63)$&5368709119&\cite{DDL-IEEE,GabDavTombR=2}&1.56250&&&\\
64&$\mathbf{\Phi(64)}$&\textbf{6979321855}&$\bigstar$&\textbf{1.32031}&$\mathbf{2^{14}+1}$&$\mathbf{2^{28}}$&$\Phi(38)$&13\\\hline
 \end{tabular}
  \label{tab5:LengFun33r64R=2}
\end{table}

In the tables, for the new codes we give also the size $p(\Hb)$ of a 2-partition of a parity check matrix $\Hb$  and the decrease $\Delta(r,2)$ of the known upper bounds on $\ell_2(r,2)$, provided by the new results, see Theorems \ref{th5:-3}, \ref{th5:PartitionKR}, \ref{th5:r=18,28}--\ref{th5:R=2NewInfFam} and their  Proofs.
Also, in Table \ref{tab5:LengFun r32R=2}, we write the important new result $p(\Hb_{KR};\Ps_{KR})=11$, obtained in Theorem \ref{th5:PartitionKR}.

The known $\phi(r)$, $f(r)$, and new $\boldsymbol{\Phi(r)}$, $\boldsymbol{\widehat{\Phi}(r)}$ values of $n$ see in \eqref{eq4:notatKnwR=2even}--\eqref{eq4:KnwFamR=2even r}, \eqref{eq4:KnwFamR=2odd r} and \eqref{eq5:notEvenR=2New1}--\eqref{eq5:r=28}, \eqref{eq5:r=22,24,26}, \eqref{eq5:InfFamR=2New}. Also, for the known $n$ we give the references (the columns ``references'' and  ``refer.'').

The new results are obtained using Theorems \ref{th5:-3}, \ref{th5:PartitionKR}, \ref{th5:r=18,28}--\ref{th5:n0Psi} (Table \ref{tab5:LengFun r32R=2}) and only Theorem \ref{th5:n0Psi}  (Table \ref{tab5:LengFun33r64R=2}). It is important to obtain the new results to do a convenient choice of the starting $[n_0,n_0-r_0]_22$ codes for these theorems and provide the needed 2-partitions.  In the tables, for the starting code we give the value $n_0$ that allows us and readers to easily find $r_0$; also, we write the value of $m$ used. Finally, only in Table \ref{tab5:LengFun r32R=2}, in column ``Th.'' we note the theorems used.

The tables illustrate the iterative process to obtain the infinite family of Theorem \ref{th5:R=2NewInfFam}.

\begin{theorem}\label{th5:n0Psi}
 $\mathbf{Construction~QM_6^2}$. Let $\Phi(2t)$ be as in \eqref{eq5:notEvenR=2New1} and let $\Delta(r,2)$ be the decrease of the known upper bounds on $\ell_2(r,2)$. In Construction \emph{QM}$_2^2$ of Theorem \emph{\ref{th4:DDLR=2}}, let  the starting code $C_0$ be an $[n_0,n_0-2t_0,3]_22$ code with $n_0=\Phi(2t_0)$, $r_0=2t_0$, and a parity check matrix $\Hb_0$ with $p(\Hb_0)\le2^{\lambda_0}+2$. For $r=2t_0+2m$, $n=2^mn_0+2^m-1$, we define a new $[n,n-r,d_C]_2R_C$ code $C$ by a parity check
$r\times n$ matrix $\Hb_C$ of the form \eqref{eq32:QM-H}--\eqref{eq32:varD} with the following indicator set $\Bs$, parameter $m$, and submatrix~$\D$:
\begin{equation}
\emph{QM}_6^2:~\Bs=\F_{2^m},~t_0\ge m\ge \lambda_0+1,~
 \D=\D_1(2). \label{eq5:QM1ainp}
\end{equation}
Then the new code $C$ is an $[n, n - r,3]_22$ code with the parameters:
\begin{equation}
\emph{QM}_6^2:~R=2,~r = 2t,~t=t_0+m,~n=\Phi(2t),~p(\Hb_C)\le 2^{m+1}+1,~\Delta(r,2)=2^{t-4}.\label{eq5:QM1ares}
\end{equation}
\end{theorem}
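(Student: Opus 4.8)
The plan is to realize $C$ as a particular instance of Construction \emph{QM}$_2^2$ of Theorem~\ref{th4:DDLR=2}, the point being that the chosen starting length $n_0=\Phi(2t_0)=26\cdot 2^{t_0-4}-1$ satisfies $n_0+1=26\cdot 2^{t_0-4}$, i.e. it is $26$ times a power of two. First I would check that the input conditions \eqref{eq4:CaseA2inp} of \emph{QM}$_2^2$ are implied by the hypotheses \eqref{eq5:QM1ainp}: the equality $\Bs=\F_{2^m}$ and the choice $\D=\D_1(2)$ are assumed; from $p(\Hb_0)\le 2^{\lambda_0}+2$ and $m\ge\lambda_0+1$ one gets $2^m\ge 2^{\lambda_0+1}\ge 2^{\lambda_0}+2\ge p(\Hb_0)$ (the middle inequality for $\lambda_0\ge1$, which holds in every instance we use; the case $\lambda_0=0$ is trivial for $m\ge2$); and from $t_0\ge m$ one gets $n_0=26\cdot 2^{t_0-4}-1\ge 2^{t_0}\ge 2^m$. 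So all of \eqref{eq4:CaseA2inp} holds with $r_0=2t_0$.

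Granting this, Theorem~\ref{th4:DDLR=2} and \eqref{eq4:CaseA2res} immediately give that $C$ is an $[n,n-r]_22$ code with $R_C=2$, redundancy $r=r_0+2m=2t_0+2m=2t$ where $t=t_0+m$, and $p(\Hb_C)\le 2^{m+1}+1$. Its length is $n=2^m(n_0+1)-1=2^m\cdot 26\cdot 2^{t_0-4}-1=26\cdot 2^{t-4}-1=\Phi(2t)$, so the family stays inside the form \eqref{eq5:notEvenR=2New1}. Hence $\ell_2(2t,2)\le\Phi(2t)$; comparing with the previously best bound $\ell_2(2t,2)\le\phi(2t)$ of \eqref{eq4:KnwFamR=2even r} and using Lemma~\ref{lem5:Delta} yields the decrease $\Delta(r,2)=\phi(2t)-\Phi(2t)=2^{t-4}$, as in \eqref{eq5:QM1ares}.

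To finish I must check $d_C=3$. Since $\xi_1=0$ (see \eqref{eq32:xi}), the $n_0$ columns $(\hb_j,\zb_m,\zb_m)^{tr}$ of $\Hb_C$, restricted to their top $r_0$ rows, reproduce $\Hb_0$; as $d(C_0)=3$ there is a weight-$3$ codeword of $C$ supported on those positions, so $d_C\le3$. For $d_C\ge3$ it suffices that the columns of $\Hb_C$ are pairwise distinct and nonzero: the $\hb_j$ are distinct and nonzero because $d(C_0)=3$, and the columns of $\Wb_m$ are distinct and nonzero, while the row blocks of sizes $r_0,m,m$ in $\D=\D_1(2)$ and in each $\Ab(\hb_j,\beta_j)$ separate the remaining pairs (e.g. the middle $m$-block of a column of $\D$ is $\zb_m$, whereas in a column of $\Ab(\hb_j,\beta_j)$ it equals some $\xi_i$, which is $\zb_m$ only for $(\hb_j,\zb_m,\zb_m)^{tr}$, whose top block is then the nonzero $\hb_j$). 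This is the same routine case analysis as in the proof of Theorem~\ref{th5:r=18,28}(i).

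I do not expect a genuine obstacle here: the content is essentially bookkeeping to make the inputs of Theorem~\ref{th4:DDLR=2} fit and to verify the identity $2^m(n_0+1)-1=\Phi(2t)$, the only steps needing care being the elementary inequalities $2^m\ge p(\Hb_0)$ and $n_0\ge 2^m$. I would also record, for later use, that the output satisfies the starting-code hypothesis of this very theorem with $\lambda_0$ replaced by $m+1$ (since $p(\Hb_C)\le 2^{m+1}+1\le 2^{m+1}+2$), which is precisely what makes the construction iterable in Theorem~\ref{th5:R=2NewInfFam}.
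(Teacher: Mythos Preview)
Your proposal is correct and follows essentially the same route as the paper's own proof: verify that the hypotheses \eqref{eq5:QM1ainp} force the input conditions \eqref{eq4:CaseA2inp} of Construction \emph{QM}$_2^2$, then read off $R_C=2$, $r=2t$, $p(\Hb_C)\le 2^{m+1}+1$ and the length identity $n=2^m(\Phi(2t_0)+1)-1=\Phi(2t)$ from \eqref{eq4:CaseA2res}, and finally invoke Lemma~\ref{lem5:Delta} for $\Delta(r,2)$. The only cosmetic difference is that the paper derives $d_C=3$ from the submatrix $\D=\D_1(2)$ (the Hamming block $\Wb_m$ supplies a weight-$3$ codeword), whereas you exhibit one via the starting code; both are valid and your more explicit verification of $d_C\ge3$ is a welcome addition.
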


\begin{proof}
For $n_0=\Phi(2t_0)$ and $p(\Hb_0)\le2^{\lambda_0}+2$, the condition $n_0\ge2^m\ge p(\Hb_0)$  of \eqref{eq4:CaseA2inp} holds if $t_0\ge m\ge \lambda_0+1$. So, we can use the results of
\eqref{eq4:CaseA2res}. In particular, we have $n=2^m(\Phi(2t_0)+1)-1=26\cdot2^{t_0-4+m}-1=\Phi(2t_0+2m)=\Phi(2t)$. The value of $\Delta(r,2)$ follows from \eqref{eq4:KnwFamR=2even r}, \eqref{eq5:Delta}. The minimum distance $d_C=3$ follows from $\D$.
\end{proof}

\begin{theorem}\label{th5:R=2NewInfFam}
Let the asymptotic covering density $\overline{\mu}(R)$ be as in \eqref{eq11:liminfdens}. Let $\Phi(r)$ be as in \eqref{eq5:notEvenR=2New1}.
There exists a new infinite family of binary linear  $[n,n-r,3]_22$ codes
with a parity check matrix $\Hb$ of the form \eqref{eq32:QM-H}--\eqref{eq32:varD}, and parameters as in \eqref{eq5:InfFamR=2New}.
\begin{align}\label{eq5:InfFamR=2New}
&R=2,~r=2t,~r=10,18,20,\T{ and }r\ge28,~t=5,9,10,\T{ and }t\ge14, \db\\
&n=26\cdot2^{r/2-4}-1= 26\cdot2^{t-4}-1=\Phi(r),~\overline{\mu}(2)\thickapprox26^2/2^9\thickapprox1.32031;\notag\db\\
&\ell_2(r,2)=\ell_2(2t,2)\le26\cdot2^{r/2-4}-1= 26\cdot2^{t-4}-1=\Phi(r),~\Delta(r,2)=2^{r/2-4}. \notag
\end{align}
Let $p(\Hb)$ be the size of a $2$-partition of the parity check matrix $\Hb$. For even $r=10,18,20$, $28\le r\le40$, the sizes $p(\Hb)$ are given in Tables \emph{\ref{tab5:LengFun r32R=2}, \ref{tab5:LengFun33r64R=2}}. For even $r\ge42$, we have
\begin{equation}\label{eq5:InfFamR=2NewLenFunLambda}
 \lambda=\lfloor r/4\rfloor-2,~p(\Hb)=2^\lambda+1,~\lambda+1=t-(\lceil r/4\rceil+1)\le t-12.
\end{equation}
\end{theorem}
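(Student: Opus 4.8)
The plan is to produce the infinite family by an iterative application of the $q^m$-concatenating constructions, using as "seeds" exactly the sporadic and small-codimension codes already constructed earlier in Section \ref{sec5:R=2new}, and then closing the family under Construction $\mathrm{QM}_6^2$ of Theorem \ref{th5:n0Psi}. Concretely: the cases $r=10$ (the Kaikkonen--Rosendahl code $C_{KR}$ of Theorems \ref{th4:KR code}, \ref{th5:PartitionKR}), $r=18$ and $r=28$ (Theorem \ref{th5:r=18,28}, Constructions $\mathrm{QM}_4^2$, $\mathrm{QM}_5^2$), and $r=20,30,32,\dots$ (Construction $\mathrm{QM}_6^2$, Theorem \ref{th5:n0Psi}) already give $[n,n-r,3]_22$ codes with $n=\Phi(r)$ for $r=10,18,20$ and for the even values $28\le r\le 40$ recorded in Tables \ref{tab5:LengFun r32R=2}, \ref{tab5:LengFun33r64R=2}. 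It remains to cover all even $r\ge 42$, together with a bookkeeping check of covering radius $2$, minimum distance $3$, the asymptotic density, and the promised $2$-partition sizes $p(\Hb)$.

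For the induction step I would argue as follows. Suppose for some $t_0$ with $2t_0 \in \{18,20\}\cup\{28,30,\dots\}$ we already have an $[n_0,n_0-2t_0,3]_22$ code $C_0$ with $n_0=\Phi(2t_0)$ and a $2$-partition of size $p(\Hb_0)\le 2^{\lambda_0}+2$, where $\lambda_0$ is the exponent read off from the tables (e.g. $\lambda_0=5$ for $2t_0=18$, $\lambda_0=6$ for $2t_0=28,30$, etc.). Whenever $t_0 \ge m \ge \lambda_0+1$, Theorem \ref{th5:n0Psi} applies and yields an $[n,n-r,3]_22$ code with $r=2t_0+2m$, $n=2^m(\Phi(2t_0)+1)-1=26\cdot 2^{t_0-4+m}-1=\Phi(r)$, and a new $2$-partition of size $p(\Hb_C)\le 2^{m+1}+1$, so the new exponent is $\lambda = m+1 \le t_0 + 1 \le t - 1$. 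Iterating, and at each stage choosing $m$ (within the admissible window $[\lambda_0+1,\,t_0]$) to be, say, $m=\lfloor r/4\rfloor - 2 - \lambda_0 + \cdots$ so that the final exponent comes out to $\lambda=\lfloor r/4\rfloor - 2$ as claimed in \eqref{eq5:InfFamR=2NewLenFunLambda}, reaches every even $r\ge 42$. The two inequalities in \eqref{eq5:InfFamR=2NewLenFunLambda}, namely $\lambda+1 = t - (\lceil r/4\rceil+1)$ and $\lceil r/4\rceil + 1 \le 12$... (rather, $t - (\lceil r/4\rceil+1) \le t-12$, i.e. $\lceil r/4\rceil \ge 11$, valid for $r\ge 42$) are then just arithmetic identities to be verified from $\lambda = \lfloor r/4\rfloor - 2$ and $t=r/2$.

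The covering-radius and distance claims require no new work: every code in the family is produced by Construction QM (specifically $\mathrm{QM}_4^2$, $\mathrm{QM}_5^2$, or $\mathrm{QM}_6^2$), and Theorems \ref{th5:r=18,28} and \ref{th5:n0Psi} already guarantee $R_C=2$ and $d_C=3$ (the latter from the submatrix $\D_1(2)$, whose bottom block is the Hamming parity-check matrix $\Wb_m$, forcing distance $\ge 3$). For the asymptotic covering density one plugs $n=26\cdot 2^{t-4}-1$ and $r=2t$ into \eqref{eq11:CovDensity}: $\mu(n,2,C) = 2^{-r}\binom{n}{0}+\binom{n}{1}+\binom{n}{2}$-type expression $\sim 2^{-2t}\cdot \tfrac12 n^2 \to 26^2/2^9 \approx 1.32031$, and since $\mu$ is increasing in $n$ for fixed $r$ while here $n/2^{t}$ is constant, the liminf in \eqref{eq11:liminfdens} equals this limit. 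The main obstacle, and the only place demanding care, is the combinatorial bookkeeping of the $2$-partition exponents across the iteration: one must verify that at each stage the inequality $t_0 \ge m \ge \lambda_0+1$ of \eqref{eq5:QM1ainp} is satisfiable with an $m$ that drives the running exponent down to the target value $\lfloor r/4\rfloor - 2$ — i.e. that the window $[\lambda_0+1,\,t_0]$ is never empty and always contains the needed $m$. This reduces to checking, using the seed data in the tables, that $\lambda_0 + 1 \le t_0$ holds for each seed (which is visible from the table entries, since there $p(\Hb)=2^{\lambda_0}+O(1)$ with $\lambda_0$ comfortably below $t_0$) and that the partial sums of the chosen $m$'s land on $\lambda = \lfloor r/4 \rfloor - 2$; I would present this as a short explicit recursion on $r$ rather than a general formula.
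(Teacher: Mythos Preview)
Your proposal is correct and follows essentially the same approach as the paper: seed the family with the codes for $r=10,18,20,28$ already built (Theorems \ref{th4:KR code}, \ref{th5:PartitionKR}, \ref{th5:r=18,28}), then close under Construction $\mathrm{QM}_6^2$ (Theorem \ref{th5:n0Psi}), tracking the $2$-partition exponent $\lambda$ through the iteration. The paper carries out exactly the ``short explicit recursion'' you describe, making the choices of $m$ concrete (e.g.\ from $r_0=28$ take $m=7,8$ to reach $r=42,44$; from each $r_0\in\{30,\dots,44\}$ take $m=t_0-7,t_0-6$ to cover $r=46,\dots,76$; and so on), which is precisely the bookkeeping you flagged as the only delicate point.
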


\begin{proof}
We prove the assertions via an iterative process; it is illustrated by Tables \ref{tab5:LengFun r32R=2}, \ref{tab5:LengFun33r64R=2}.

\textbf{(i)} For $r=10$, $t=5$, we have \cite{KaikRoseADSlike2003} the $[51,41,3]_22=[\Phi(10),\Phi(10)-10]$ code $C_{KR}$ with the parity check matrix $\Hb_{KR}$, see  Theorem \ref{th4:KR code}. The new 2-partitions $\Ps_{KR}$ \eqref{eq5:PKR} and $\Ps_{KR}^*$ \eqref{eq5:PKR*} of $\Hb_{KR}$, obtained in Theorem \ref{th5:PartitionKR} and Proof of Theorem \ref{th5:r=18,28}, are very important for the next iterative process.

\textbf{(ii)} For $r=18,28$, Theorem \ref{th5:r=18,28} with \eqref{eq5:r=18}, \eqref{eq5:r=28} obtains $[\Phi(18),\Phi(18)-18,3]_22$ and $[\Phi(28),\Phi(28)-28,3]_22$ codes with $p(\Hb)=2^5+1$ and $p(\Hb)=2^6+2$, respectively.

\textbf{(iii)} For even $r=20$ and $r\ge30$, we iteratively use Construction QM$_6^2$ of Theorem~\ref{th5:n0Psi}
 with \eqref{eq5:QM1ainp}, \eqref{eq5:QM1ares},
choosing convenient starting codes $C_0$ and parameters $m$.

\textbf{(iii-1)} As the starting code $C_0$ we take the $[51,41,3]_22$ code $C_{KR}$ with $n_0=\Phi(10)$, $r_0=10$, $t_0=5$.  By Theorem \ref{th5:PartitionKR}, we have $p(\Hb_0)=11<2^4+1$,  $\lambda_0=4$. The condition $t_0\ge m\ge \lambda_0+1$ from \eqref{eq5:QM1ainp} holds if $m=4,5$. We take $m=5$. By \eqref{eq5:QM1ares}, we obtain a $[\Phi(20),\Phi(20)-20]_22$ code with $r=20$, $t=10$, $p(\Hb)=2^6+1$, $\Delta(20,2)=2^6$.

\textbf{(iii-2)} Let $C_0$ be the $[\Phi(18),\Phi(18)-18,3]_22$ code with $r_0=18$, $t_0=9$, $p(\Hb)=2^5+1$. We have $t_0\ge m\ge \lambda_0+1$ if $m=6,7,8,9$. We take $m=6,7$. By \eqref{eq5:QM1ares}, we obtain  $[\Phi(r),\Phi(r)-r,3]_22$ codes with $r=30,32$, $t=r/2$, $p(\Hb)=2^{m+1}+1$, $\Delta(r)=2^{t-4}$.

\textbf{(iii-3)} Let $C_0$ be the $[\Phi(20),\Phi(20)-20,3]_22$ code with $r_0=20$, $t_0=10$, $p(\Hb)=2^6+1$. We have $t_0\ge m\ge \lambda_0+1$  if $m=7,8,9,10$. For these $m$, by \eqref{eq5:QM1ares}, we obtain $[\Phi(r),\Phi(r)-r,3]_22$  codes with $r=34,36,38,40$, $t=r/2$, $p(\Hb)=2^{m+1}+1$, $\Delta(r,2)=2^{t-4}$.

\textbf{(iii-4)}  Let $C_0$ be the $[\Phi(28),\Phi(28)-28,3]_22$ code with $r_0=28$, $t_0=14$, $p(\Hb)=2^6+2$. We have $t_0\ge m\ge \lambda_0+1$ if $m=7,8,\ldots,14$. We take $m=7,8=t_0-7,t_0-6$. By \eqref{eq5:QM1ares}, we obtain  $[\Phi(r),\Phi(r)-r,3]_22$ codes with $r=42,44$, $t=r/2$, $\lambda=\lfloor r/4\rfloor-2=t-13$, $p(\Hb)=2^\lambda+1$, $\lambda+1=t-(\lceil r/4\rceil+1)=t-12$, $\Delta(r,2)=2^{t-4}$.

\textbf{(iii-5)} We consider 8 the starting $[\Phi(r_0),\Phi(r_0)-r_0,3]_22$ codes $C_0$ with $r_0=2t_0=30,32,\ldots,44$, $t_0=15,16,\ldots,22$. By \eqref{eq5:QM1ainp}, the allowed values of $m$ satisfy $t_0\ge m\ge \lambda_0+1$, where $\lambda_0$ depends on the specific code. By above, for the considered values of $t_0$ we have, respectively, the following values of $\lambda_0+1$: $t_0-7,t_0-7,t_0-8,t_0-8,t_0-8,t_0-8,t_0-12,t_0-12$. For all $C_0$, we use $m=t_0-7,t_0-6$, and by \eqref{eq5:QM1ares}, obtain 16  new $[\Phi(r),\Phi(r)-r,3]_22$ codes with even $r=46,48,\ldots,74,76$,
$t=r/2$, $\lambda=\lfloor r/4\rfloor-2$, $p(\Hb)=2^\lambda+1$, $\Delta(r,2)=2^{r/2-4}$, $\lambda+1=t-(\lceil r/4\rceil+1)\le t-12$, see Table \ref{tab5:LengFun33r64R=2} for $r\le64$.

Obviously, we can continue this iterative process infinitely obtaining the infinite code family of \eqref{eq5:InfFamR=2New}.
\end{proof}

\section{The known results on binary linear codes of covering radius 3}\label{sec6:R=3known}

\begin{theorem}\label{th6:DDLD95R=3} \emph{\cite{Dav90PIT}, \cite[Theorem 4.1]{DDL-IEEE}}
$\mathbf{Constructions~QM_1^3 - QM_4^3}$. In Construction \emph{QM} of Section~$\ref{subsec32:QM}$, let the starting code $C_0$  be an $[n_0,n_0-r_0]_23,\ell_0$ code
with a parity check matrix $\Hb_0$ of the form \eqref{eq32:H0}. We define a new $[n,n-r]_2R_C$ code $C$ with $r=r_0+3m$, $n=2^mn_0+N_m$ by a parity check $r\times n$ matrix $\Hb_C$ of the form \eqref{eq32:QM-H}--\eqref{eq32:Hcb2m}. Let the indicator set $\Bs$, parameter $m$, and $r\times N_m$ submatrix $\D$ be chosen by one of the ways \eqref{eq6:ell0=0inp}--\eqref{eq6:any ell0CSIinp} which set specific  Constructions
\emph{QM}$_1^3$--\emph{QM}$_4^3$.
\begin{align}
&\emph{QM}_1^3: ~\ell_0=0,~\Bs\subseteq\F_{2^m}^*,\,2^m-1\ge p(\Hb_0,0),\,
 \D=\D_4.\label{eq6:ell0=0inp}\db\\
&\emph{QM}_2^3: ~\ell_0=1,\,\Bs\subseteq\F_{2^m},\,2^m\ge p(\Hb_0,1),\,
 \D=\D_2(3).\label{eq6:ell0=1inp}\db\\
 &\emph{QM}_3^3: ~\ell_0=2,~\Bs\subseteq\F_{2^m}\cup\{*\},~2^m+1\ge p(\Hb_0,2),~
 \D=\D_3.\label{eq6:ell0=2inp}\db\\
 &\emph{QM}_4^3:~\forall\ell_0,~\Bs=\F_{2^m}\cup\{*\},~n_0\ge2^m+1\ge p(\Hb_0,\ell_0),~
 \D=\D_3.\label{eq6:any ell0CSIinp}
\end{align}
Then the new code $C$ is an $[n, n - r]_23,\ell$ code with $r = r_0 + 3m$, and parameters:
\begin{align}
 &\emph{QM}_1^3: ~n=2^m(n_0+1)+n_{2m}-1,~\ell\ge1.\label{eq6:ell0=0res}\db\\
 &\emph{QM}_2^3: ~n=2^m(n_0+2)-2,~\ell=2,~p(\Hb_C,1)\le p(\Hb_0,1)+2.\label{eq6:ell0=1res}\db\\
 &\emph{QM}_3^3: ~n=2^m(n_0+1)-1,~\ell=2,~p(\Hb_C,2)\le p(\Hb_0,2)+1.\label{eq6:ell0=2res}\db\\
 &\emph{QM}_4^3: ~n=2^m(n_0+1)-1,~\ell=2\T{ for }m\ge2,~p(\Hb_C,\ell_0)\le2^{m+1}+3.\label{eq6:any ell0CSIres}
\end{align}
 \end{theorem}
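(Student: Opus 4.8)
The plan is to verify, for each of the four specific constructions $\mathrm{QM}_1^3$--$\mathrm{QM}_4^3$, the three claims bundled into the statement: (a) the length formula $n=2^m n_0+N_m$ with the correct value of $N_m$ coming from the chosen submatrix $\D$; (b) the covering radius of the new code $C$ equals $3$ (i.e. every column of $\F_2^{\,r}$ is a sum of at most $3$ columns of $\Hb_C$ belonging to distinct subsets of a suitable $(3,\ell)$-partition, and $3$ is minimal); and (c) the stated value of $\ell$ and the bound on $p(\Hb_C,\ell)$, which requires exhibiting an explicit $(3,\ell)$-partition of $\Hb_C$ and counting its parts. The length formulas in (a) are immediate bookkeeping: for $\D=\D_4$ the $N_m$ contribution is $(2^m-1)+n_{2m}$, for $\D=\D_2(3)$ it is $2(2^m-1)$, and for $\D=\D_3$ it is $2^m-1$; each matches the claimed $n$. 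So the substance is in (b) and (c).

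For (b) and (c) I would proceed columnwise. Write an arbitrary target column of $\F_2^{\,r}$ as $\Ub=(\pib,\ub_1,\dots)^{tr}$ with $\pib\in\F_2^{\,r_0}$ and the remaining $3m$ (or $2m+m$) coordinates split into blocks of length $m$ according to the block structure of \eqref{eq32:Aj}--\eqref{eq32:A*} and of the chosen $\D_i$. Then case-split on how $\pib$ arises from the starting code $C_0$: either $\pib=\ozero$, or $\pib$ is a single column $\hb_j$, or $\pib$ is a sum of $\ell_0+1,\dots$ up to some number of columns of $\Hb_0$ lying in distinct subsets of the starting $(3,\ell_0)$-partition $\Ps_0$ (the exact range depending on $\ell_0$). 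In each subcase one produces the required representation of $\Ub$ as a sum of $\le 3$ columns of $\Hb_C$: the columns $\Ab(\hb_j,\beta_j)$ contribute the $\pib$-part in their top $r_0$ rows while the Vandermonde-type rows $\beta_j^i\xi$ let us hit prescribed values in the middle blocks by solving small linear systems over $\F_{2^m}$ (nonsingular because the $\beta_j$ are distinct and nonzero — this is exactly where the hypothesis $\Bs\subseteq\F_{2^m}^*$ in $\mathrm{QM}_1^3$, resp. $\beta_j$ ranging over $\F_{2^m}$ or $\F_{2^m}\cup\{*\}$ in the others, is used), and the $\D$-block (which carries Hamming-code parity checks $\Wb_m$, or an ADS-type covering-radius-2 matrix $\Hcb_{2m}$) absorbs any leftover in the bottom rows when $\pib=\ozero$ or when only one $\Ab(\hb_j,\beta_j)$ column is available. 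The bound on $p(\Hb_C,\ell)$ is then obtained by the standard recipe: refine the natural partition of $\Hb_C$ inherited from $\Ps_0$ (one part per subset of $\Ps_0$, each yielding one part for $\mathrm{QM}_2^3,\mathrm{QM}_3^3$, or being split into at most $2^{m}+\cdots$ singleton-type parts for $\mathrm{QM}_4^3$) together with a fixed number of extra parts for the columns of $\D$; tracking these gives $p(\Hb_0,1)+2$, $p(\Hb_0,2)+1$, and $2^{m+1}+3$ respectively. Minimality of the covering radius (that it is not $\le 2$) follows since $C$ contains $C_0$ as a restriction and $C_0$ already has covering radius $3$, or directly from the presence of a column of $\F_2^{\,r}$ requiring three summands.

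The main obstacle will be (c) for $\mathrm{QM}_4^3$: establishing $\ell=2$ for all $m\ge 2$ regardless of $\ell_0$, and the clean bound $p(\Hb_C,\ell_0)\le 2^{m+1}+3$. Here one cannot simply inherit a partition from $\Ps_0$ because $\ell_0$ may be $0$; instead one must use the full indicator set $\Bs=\F_{2^m}\cup\{*\}$ to build a fresh fine partition of the $\Ab(\hb_j,\beta_j)$-columns (roughly, grouping by the value of $\beta_j$ and separating the $\xi=0$ columns, as is done for $R=2$ in the proof of Theorem~\ref{th5:r=18,28}), and then carefully check that \emph{every} target column — including the zero column, which is why $\ell$ can be forced up to $2$ — admits a representation using $\ge 2$ and $\le 3$ columns from distinct parts. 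Verifying this uniformly over $m\ge 2$ and over the three block positions, while simultaneously keeping the part count at $2^{m+1}+3$, is the delicate combinatorial core; the other three constructions are comparatively routine once the case analysis template is set up.
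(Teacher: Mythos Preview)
The paper does not actually prove this theorem: it is stated as a known result, cited from \cite{Dav90PIT} and \cite[Theorem~4.1]{DDL-IEEE}, and appears in Section~\ref{sec6:R=3known} (``The known results\ldots'') with no proof supplied. So there is no in-paper proof to compare against directly.

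That said, your proposal is on the right track and matches the template the paper \emph{does} use when it proves the closely related new Construction $\mathrm{QM}_5^3$ in Theorem~\ref{th7:ell0=0R=3}: there the authors carry out exactly the columnwise case analysis you describe, splitting on whether $\pib$ is a sum of three, two, one, or zero columns of $\Hb_0$ from distinct subsets of $\Ps_0$, solving Vandermonde systems in the $\beta_j$ to hit the middle blocks, and using the $\D$-block (Hamming or $\Hcb_{2m}$) to absorb residuals. The partition bounds are obtained just as you outline, by inheriting the parts of $\Ps_0$ on the $\Ab$-blocks and adding a small fixed number of parts for $\D$. So your plan is essentially the standard one and would reproduce the cited proofs.

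One minor caution: your argument that $R_C\ge 3$ because ``$C$ contains $C_0$ as a restriction'' is not literally correct as stated (the relationship between $C$ and $C_0$ is not a simple code inclusion), though in practice the lower bound on covering radius is either trivial from a sphere-covering count or simply not the point of these constructions; the paper's own proof of Theorem~\ref{th7:ell0=0R=3} asserts $R_C=3$ after showing only the upper bound. Your identification of $\mathrm{QM}_4^3$ as the most delicate case is accurate: there the full indicator set $\Bs=\F_{2^m}\cup\{*\}$ together with the condition $n_0\ge 2^m+1$ is what enables the ``complete set of indicators'' trick (splitting by $\beta$-value and separating the $\xi=0$ columns) that yields the $2^{m+1}+3$ bound independently of $\ell_0$.
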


\begin{notation}\label{not6:psi}
 We introduce the following notations:
\begin{align*}
&\varphi(r)\triangleq3\cdot2^{(r-1)/3}-1=3\cdot2^{t-1}-1\T{ for }r=3t-2;\db\\
&\gamma(r)\triangleq821\cdot2^{(r-26)/3}-1=821\cdot2^{t-9}-1\T{ for }r=3t-1;\db\\
&\psi(r)\triangleq144\cdot2^{(r-18)/3}-1=144\cdot2^{t-6}-1\T{ for }r=3t.
\end{align*}
\end{notation}

\begin{theorem}\label{th6:KnownFamR=3}
\emph{\cite{DDL-IEEE,DDL-ACCT2-1990,DDL-ACCT3-1992,DavCovNewConstrCovCod}} Let the asymptotic density $\overline{\mu}(R)$ be as in \eqref{eq11:liminfdens}. There are infinite families of $[n,n-r]_23,\ell$ codes with growing $r$ and the following  parameters:
\begin{align}
&r=3t-2\ge22,~t\ge8,\T{and } r=4,7,16,~n=3\cdot2^{(r-1)/3}-1=3\cdot2^{t-1}-1=\varphi(r),\notag\db\\
&\ell=2\T{ if }r\ge16,~\overline{\mu}(3)\thickapprox2.25; ~\ell_2(3t-2,3)\le3\cdot2^{t-1}-1   \emph{\cite[Example 4.3]{DDL-IEEE}}.\notag\db\\
&r=3t-1\ge14,n=1024\cdot2^{(r-26)/3}-1,\ell=2,\overline{\mu}(3)\thickapprox2.6667\notag\\
&\emph{\cite[p.\ 56, Table]{DDL-ACCT2-1990},\cite[Example 4.1]{DDL-IEEE}}.\notag\db\\
&r=3t-1\ge26,n=822\cdot2^{(r-26)/3}-2,\ell\ge1,\overline{\mu}(3)\thickapprox1.3794\notag\\
&\emph{\cite[p.\ 56, Table]{DDL-ACCT2-1990},\cite[Example 4.2]{DDL-IEEE}}.\notag\db\\
&r=3t-1\ge41,~t\ge14,~n=821\cdot2^{(r-26)/3}-1=821\cdot2^{t-9}-1=\gamma(r),
\label{eq6:knownfam3t-1}\db\\
&\overline{\mu}(3)\thickapprox1.3744;
~\ell_2(3t-1,3)\le821\cdot2^{t-9}-1\T{ if }t\ge14\emph{\cite[p.\ 53]{DDL-ACCT3-1992}, \cite[Example 4.2]{DDL-IEEE}}.\notag\db\\
&r=3t\ge18,n=155\cdot2^{(r-18)/3}-2,\ell\ge1,\overline{\mu}(3)\thickapprox2.3576\notag\\
&\emph{\cite[p.\ 56, Table]{DDL-ACCT2-1990},\cite[Example 4.4]{DDL-IEEE}};\notag\db\\
&r=3t\ge27,~n=152\cdot2^{(r-18)/3}-1,~\ell=2,~\overline{\mu}(3)\thickapprox2.2327 \emph{\cite[Example 4.5]{DDL-IEEE}}.\notag\db\\
&r=3t\ge30,t\ge10,\T{ and }r=15,\,n=144\cdot2^{(r-18)/3}-1=144\cdot2^{t-6}-1=\psi(r),
\notag\db\\
&\ell=2,~\overline{\mu}(3)\thickapprox1.8984,~\ell_2(3t,3)\le144\cdot2^{t-6}-1\T{ if }t=5, t\ge10\emph{\cite[Examples 5,\,8]{DavCovNewConstrCovCod}}.\notag
 \end{align}
\end{theorem}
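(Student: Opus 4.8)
The plan is to reconstruct each of the listed families by iterating one of the $q^m$-concatenating Constructions QM$_1^3$--QM$_4^3$ of Theorem~\ref{th6:DDLD95R=3}, starting from an explicit small \emph{seed} $[n_0,n_0-r_0]_23,\ell_0$ code whose parity check matrix $\Hb_0$ admits a $3$-partition into few parts. Because Theorem~\ref{th6:DDLD95R=3} already certifies that each version of Construction QM preserves the covering radius $R=3$, outputs a code with $\ell\ge1$ (with $\ell=2$ once $m\ge2$ in the versions built on $\D_2(3)$ or $\D_3$), and bounds the new partition size, the remaining work is: (a) exhibit the seed codes and their $3$-partitions; (b) check that the admissibility inequality of the chosen version --- $2^m-1\ge p(\Hb_0,0)$ for QM$_1^3$, $2^m\ge p(\Hb_0,1)$ for QM$_2^3$, $2^m+1\ge p(\Hb_0,2)$ for QM$_3^3$, or $n_0\ge2^m+1\ge p(\Hb_0,\ell_0)$ for QM$_4^3$ --- stays satisfied along the whole iteration; and (c) solve the resulting linear recurrence for $n$ in terms of $r$. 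This is the route taken in \cite{DDL-IEEE,DDL-ACCT2-1990,DDL-ACCT3-1992,DavCovNewConstrCovCod}.

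First I would pin down, for each target family, the seed and the construction version: for the $\varphi$-family the seeds at $r_0=4,7,16$; for the three $r=3t-1$ families the seeds at $r_0=14,26,41$ with leading coefficients $1024$, $822$, $821$; for the $r=3t$ families the seeds with leading coefficients $155$, $152$, $144$ (the last at $r_0=15$). One QM step sends $(n_0,r_0)\mapsto(2^mn_0+N_m,\,r_0+3m)$ with $N_m\in\{2^m-1,\;2(2^m-1),\;2^m-1+n_{2m}\}$ according to $\D\in\{\D_3,\D_2(3),\D_4\}$; the $\D_4$ version is used at most once, to turn a $d\ge4$ (hence $\ell_0=0$) seed into an $\ell\ge1$ code, after which one switches to a version admitting $\ell_0\ge1$. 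Iterating a $\D_3$- or $\D_2(3)$-step with $m$ held fixed, the recursion $n\mapsto2^mn+c(2^m-1)$ with $c\in\{1,2\}$ telescopes to $n=(n_0+c)\,2^{(r-r_0)/3}-c$, which is exactly the stated closed form once one reads off $c$ (namely $c=2$ for the two families of shape $A\cdot2^{(r-s)/3}-2$ and $c=1$ otherwise) and the leading coefficient $n_0+c\in\{3,1024,822,821,155,152,144\}$. The $\ell$-value claims then follow from the appropriate clause of Theorem~\ref{th6:DDLD95R=3}.

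The hard part will be the bookkeeping of the partition sizes: the input requirement of a QM step at codimension $r+3m$ has to be met by the output bound of the step at codimension~$r$, and the only available output bounds are $p(\Hb_C,1)\le p(\Hb_0,1)+2$, $p(\Hb_C,2)\le p(\Hb_0,2)+1$, and $p(\Hb_C,\ell_0)\le2^{m+1}+3$ from \eqref{eq6:ell0=1res}--\eqref{eq6:any ell0CSIres}. For the ``thin'' families (leading coefficient $3$, $152$, $155$, $144$) one checks that iterating QM$_3^3$ or QM$_4^3$ with fixed $m$ is admissible because $p$ either stabilises at a small value or grows by at most $1$ per step, which is absorbed by the slack $2^m+1$; for the $821/822/1024$ families one verifies --- as in \cite{DDL-ACCT3-1992,DDL-IEEE} --- that the seed already carries a $3$-partition small enough that after one QM$_4^3$ step the code enters a stable regime, the output bound $p(\Hb_C,\ell_0)\le2^{m+1}+3$ staying below the new length, so that every further iteration goes through. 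Once all families are stable, I would finish by computing $\overline{\mu}(3)$ from \eqref{eq11:liminfdens} and \eqref{eq11:CovDensity}: for $R=3$ and $n=A\cdot2^{(r-s)/3}+O(1)$ one has $\mu(n,3,C)=2^{-r}\binom{n}{3}(1+o(1))\to A^{3}/(6\cdot2^{s})$ (a genuine limit, hence equal to the liminf $\overline{\mu}(3)$), which evaluates to $\approx2.25$ ($A=3$, $s=1$), $\approx2.6667,\,1.3794,\,1.3744$ ($s=26$, $A=1024,822,821$), and $\approx2.3576,\,2.2327,\,1.8984$ ($s=18$, $A=155,152,144$), matching the stated densities.
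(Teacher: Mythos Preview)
The paper does not give its own proof of this theorem: it is stated purely as a compilation of known results, with each line carrying an explicit citation to \cite{DDL-IEEE}, \cite{DDL-ACCT2-1990}, \cite{DDL-ACCT3-1992}, or \cite{DavCovNewConstrCovCod}, and no argument follows. So there is nothing in the paper to compare your proposal against at the level of proof details.

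That said, your reconstruction is faithful to how these families are actually built in the cited sources: one fixes a small seed $[n_0,n_0-r_0]_23,\ell_0$ code with a controlled $(3,\ell_0)$-partition, applies one of Constructions QM$_1^3$--QM$_4^3$ (Theorem~\ref{th6:DDLD95R=3}) iteratively, and reads off the closed form $n=(n_0+c)\cdot2^{(r-r_0)/3}-c$ from the recursion $n\mapsto2^mn+c(2^m-1)$. Your identification of which $\D$-block gives which $c$, your handling of the partition-size bookkeeping via \eqref{eq6:ell0=1res}--\eqref{eq6:any ell0CSIres}, and your asymptotic density computation $\overline{\mu}(3)=A^3/(6\cdot2^s)$ are all on the right track. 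The one place where your sketch is thin is the existence and explicit description of the seed codes (e.g.\ the $[820,794]_23$ seed behind the $822$-family, or the $[71,56]_23,2$ code behind the $144$-family from \cite{DavCovNewConstrCovCod}); these are genuinely nontrivial inputs that you would need to import from the cited papers rather than derive, but since the theorem is itself a citation this is entirely appropriate.
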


\begin{theorem}\label{th6:OK189}\emph{\"{O}sterg{\aa}rd, Kaikkonen \cite[Table 2]{OstKaikUpBndBin1998}}
Let $C_{OK}$ be the binary $[18,9]_2$ code with the parity check matrix
$\Hb_{OK}=[\Ib_{9}\Mb_{OK}]$, where $\Mb_{OK}$ is the $9\times9$ binary matrix with the following columns in
hexadecimal notation:
\begin{align}\label{eq6:HOK}
&\Mb_{OK}=[\mathrm{1A0,174,A5,173,17,E8,9,18D,ICE}].
\end{align}
Then $C_{OK}$ is a $[18,9]_23$ code of covering radius $3$.
\end{theorem}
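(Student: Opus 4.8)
The final statement is Theorem~\ref{th6:OK189}, which asserts that the binary $[18,9]_2$ code $C_{OK}$ with parity check matrix $\Hb_{OK}=[\Ib_9\,\Mb_{OK}]$, with the nine columns of $\Mb_{OK}$ given in hexadecimal, has covering radius exactly $3$. Since the paper attributes this to \"{O}sterg{\aa}rd and Kaikkonen \cite{OstKaikUpBndBin1998}, the proof is essentially a verification, and the plan is to supply a proof in the form in which such facts are normally checked.

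The plan is to use Definition~\ref{Def11:CoverRad}(ii): one must show that every vector in $\F_2^{\,9}$ is a sum of at most $3$ columns of $\Hb_{OK}$, and that $3$ is the smallest such value. First I would establish the lower bound $R(C_{OK})\ge 3$. Because $\Hb_{OK}$ has $18$ columns in $\F_2^{\,9}$, the sphere-covering (Hamming) bound gives $V_{R,18,2}\ge 2^9=512$; since $V_{2,18,2}=1+18+\binom{18}{2}=1+18+153=172<512<V_{3,18,2}=172+\binom{18}{3}=172+816=988$, no code of length $18$ and redundancy $9$ can have covering radius $\le 2$, so $R(C_{OK})\ge 3$. (Alternatively, one exhibits a single syndrome that is provably not a sum of $\le 2$ columns, but the counting bound is cleaner and already suffices.) For the upper bound $R(C_{OK})\le 3$ I would argue as follows: the syndromes that arise as sums of at most $3$ columns of $\Hb_{OK}$ are exactly the syndromes of the coset leaders of weight $\le 3$; equivalently, one checks that every one of the $2^9=512$ elements of $\F_2^{\,9}$ occurs among the $1+18+\binom{18}{2}+\binom{18}{3}=988$ sums of $0$, $1$, $2$, or $3$ columns. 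This is a finite check: convert the nine hexadecimal strings \texttt{1A0},\ldots,\texttt{ICE} to binary $9$-tuples, adjoin the nine unit columns from $\Ib_9$, enumerate all sums of $\le 3$ of the $18$ resulting columns, and confirm the set of attained syndromes is all of $\F_2^{\,9}$. I would present this as a short exhaustive computation (it is the standard way covering radii of small specific codes are certified, and is how \cite{OstKaikUpBndBin1998} obtained it), optionally recording for the reader one concrete weight-$3$ coset-leader representation for a few ``hard'' syndromes such as the all-ones vector.

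The main obstacle is purely that the upper bound is a brute-force verification rather than something with structure: there is no evident algebraic shortcut exploiting $[\Ib_9\,\Mb_{OK}]$ (the columns of $\Mb_{OK}$ were found by computer search, not by a construction), so the honest proof is ``run the $988$-element enumeration and see that it hits all $512$ syndromes.'' If one wants to avoid a naked appeal to computation, the cleanest semi-structured route is to note that it suffices to check that every syndrome of weight-profile not already covered by weights $\le 2$ — i.e.\ the $512-172=340$ missing syndromes, if that many are missing — is a sum of exactly $3$ columns, and to tabulate such triples; but this still amounts to the same search. I therefore expect the write-up to state the lower bound via \eqref{eq11:HamBall} and the counting bound as above, and then cite/perform the exhaustive check for the upper bound, concluding $R(C_{OK})=3$ by Definition~\ref{Def11:CoverRad}(ii).
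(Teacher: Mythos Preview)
Your approach is correct, but there is nothing in the paper to compare it to: the paper states Theorem~\ref{th6:OK189} as a cited result from \cite[Table~2]{OstKaikUpBndBin1998} and gives no proof of its own. Your sphere-covering argument for $R\ge3$ and exhaustive syndrome enumeration for $R\le3$ are exactly how such a fact is certified, so the proposal is fine as a self-contained verification (note the evident typo \texttt{ICE}\,$\to$\,\texttt{1CE} in the last column of $\Mb_{OK}$).
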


\section{New results on binary linear codes of covering radius 3}\label{sec7:R=3new}

\begin{theorem}\label{th7:PartitionOK}
  Let $\Hb_{OK}=[\Ib_9\Mb_{OK}]$ be the binary $9\times18$ matrix with columns of the $9\times9$ submatrix $\Mb_{OK}$ given in \eqref{eq6:HOK}. Let the columns of $\Hb_{OK}$ be numbered from left to right. Let the partition $\Ps_{OK}$ of the column set of $\Hb_{OK}$ into $11$ subsets  be as follows:
  \begin{equation*}
 \Ps_{OK}\triangleq\{1,2,4\},\{3\},\{5,8\},\{6,17\},\{7,10\},\{11,14\},\{12\},\{13,18\},\{15\},\{9\},\{16\},\notag
  \end{equation*}
where column numbers for every subset are written. Then, $\Ps_{OK}$ is a $(3,1)$-partition of $\Hb_{OK}$, i.e. $p(\Hb_{OK},1;$ $\Ps_{OK})=11$ and
the $[18,9]_23$ code $C_{OK}$ with the parity check matrix $\Hb_{OK}$ of Theorem \emph{\ref{th6:OK189}} is an $[18,9,3]_23,1$ code with $p(\Hb_{OK},1)\le11$.
\end{theorem}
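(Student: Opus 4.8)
The plan is to verify, straight from Definition~\ref{def31:partition}(i), that every column of $\F_2^{\,9}$ --- including the zero column $\zb_9$ --- equals a sum of at least one and at most three columns of $\Hb_{OK}$ lying in pairwise distinct subsets of the partition $\Ps_{OK}$. By Theorem~\ref{th6:OK189} we already know $C_{OK}$ has covering radius $3$, i.e.\ every vector of $\F_2^{\,9}$ is a sum of at most three columns of $\Hb_{OK}$; the additional content here is that the summands can be chosen from distinct blocks of the specific $11$-block partition $\Ps_{OK}$, together with the lower bound $\ell=1$ and the assertion $d=3$.

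First I would clear away the easy parts. Inspecting \eqref{eq6:HOK}, the $18$ columns of $\Hb_{OK}=[\Ib_9\,\Mb_{OK}]$ are pairwise distinct and nonzero, so $d\ge3$; once the $(3,1)$-partition property is established, Lemma~\ref{Lem31:d&elll} forces $d\le3$, hence $d=3$. Any nonzero $v\in\F_2^{\,9}$ that already occurs among these $18$ columns is a sum of a single column, which vacuously uses one block. For $\zb_9$ one must instead produce a weight-$3$ codeword of $C_{OK}$, i.e.\ three columns of $\Hb_{OK}$ summing to $\zb_9$, whose three indices fall in three distinct blocks of $\Ps_{OK}$; this is a direct check against \eqref{eq6:HOK} and the block list defining $\Ps_{OK}$.

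The bulk of the argument is the exhaustive verification for the remaining vectors. For each $v\in\F_2^{\,9}$ that is not itself a column of $\Hb_{OK}$, I would first test all sums $\hb_i+\hb_j$ with $i,j$ in distinct blocks of $\Ps_{OK}$, and if none equals $v$, test all sums $\hb_i+\hb_j+\hb_k$ with $i,j,k$ in three distinct blocks. The search is tiny: there are $\binom{11}{3}=165$ unordered triples of blocks, and inside each the number of column-triples is the product of the three block sizes (all of size $1$, $2$, or $3$ here), so ranging over all $512$ targets is immediate. As in the proof of Theorem~\ref{th5:PartitionKR}, this check --- and the construction of $\Ps_{OK}$ itself --- is carried out by computer; it confirms that $\Ps_{OK}$ is a $(3,1)$-partition, whence $p(\Hb_{OK},1;\Ps_{OK})=11$ and $C_{OK}$ is an $[18,9,3]_23,1$ code with $p(\Hb_{OK},1)\le11$.

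The ``hard part'' is not a conceptual one: there appears to be no slick structural reason why a partition into as few as $11$ blocks exists and works, so the genuinely creative step --- searching for such a $\Ps_{OK}$ --- is delegated to the computer, and what remains is a routine but non-human-sized case check. One could instead organize a by-hand proof by the Hamming weight of $v$, exploiting the $\Ib_9$ block to handle low-weight targets, but this is laborious and offers no real insight beyond the direct enumeration.
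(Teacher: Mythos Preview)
Your proposal is correct and follows essentially the same approach as the paper: a computer verification that $\Ps_{OK}$ satisfies Definition~\ref{def31:partition}(i). The only noteworthy difference is that the paper explicitly exhibits the linearly dependent triple --- columns $6$, $9$, $16$ lie in the distinct blocks $\{6,17\}$, $\{9\}$, $\{16\}$ and satisfy $\hb_6+\hb_9+\hb_{16}=\zb_9$ --- which simultaneously handles the zero column for $\ell\ge1$ and pins down $d=3$; you leave this as an unspecified ``direct check,'' so naming that triple would make your write-up self-contained.
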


\begin{proof}
  We obtained $\Ps_{OK}$ by computer search. The sum of the columns 6, 9, 16 is: $(0,0,0,0,0,1,0,0,0)^{tr}+(0,0,0,0,0,0,0,0,1)^{tr}+(0,0,0,0,0,1,0,0,1)^{tr}=\zb_9$. So, 3 linear dependent columns are placed in distinct subsets.
\end{proof}

\begin{theorem}\label{th7:303R=3}
There exists a new $[303,282,3]_23,2$ code with a parity check matrix, admitting a $(3,1)$-partition into $35$ subsets, and with $\Delta(21,3)=5$.
\end{theorem}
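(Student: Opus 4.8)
The plan is to apply a $2^m$-concatenating construction with $m=4$ to a suitable starting code of covering radius $3$, using the $(3,1)$-object structure exhibited in Theorem \ref{th7:PartitionOK}. First I would take the starting code $C_0$ to be the $[18,9,3]_23,1$ code $C_{OK}$ of Theorems \ref{th6:OK189} and \ref{th7:PartitionOK}, so $n_0=18$, $r_0=9$, equipped with the $(3,1)$-partition $\Ps_{OK}$ into $p(\Hb_{OK},1)=11$ subsets. With $m=4$ we have $2^m=16\ge 11=p(\Hb_0,1)$, so Construction $\emph{QM}_2^3$ of Theorem \ref{th6:DDLD95R=3} applies with $\Bs\subseteq\F_{2^4}$ and $\D=\D_2(3)$; by \eqref{eq6:ell0=1res} the resulting code has $r=r_0+3m=9+12=21$, length $n=2^4(n_0+2)-2=16\cdot20-2=318$, covering radius $3$, $\ell=2$, and $p(\Hb_C,1)\le p(\Hb_0,1)+2=13$. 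This already gives a $[318,297,3]_23,2$ code, which is longer than $303$; the remaining task is to shorten it by $15$ columns while preserving covering radius $3$ and the $(3,1)$ structure.

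The shortening should be arranged, exactly as in the proofs of Theorems \ref{th5:r=18,28} and \ref{th5:-3r=22,24,26}, by exploiting that three linearly dependent columns of $\Hb_{OK}$ (namely columns $6,9,16$, whose sum is $\zb_9$) lie in three distinct subsets of $\Ps_{OK}$. In the concatenated matrix $\Hb_C$ the corresponding blocks $\Ab(\hb_6,\beta_6)$, $\Ab(\hb_9,\beta_9)$, $\Ab(\hb_{16},\beta_{16})$ each contribute $2^m=16$ columns; among these, the "$\xi_1=0$" columns are $(\hb_j,\zb_{3m})^{tr}$, and the identity $\hb_6+\hb_9+\hb_{16}=\zb_9$ means that in a $(3,1)$-partition these three special columns can be placed in distinct singleton subsets so that one is the sum of the other two. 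One then deletes the appropriate $\xi_1=0$ columns from two of the three blocks and compensates with the $\D_2(3)$/$\Wb_m$-part, in the spirit of Constructions $\emph{QM}_3^2$–$\emph{QM}_5^2$; a careful bookkeeping of which columns are removed yields exactly $318-303=15$ deletions and at the same time reorganizes the partition into $35$ subsets. I would verify $R_C=3$ directly from Definition \ref{Def11:CoverRad}(ii) by the same case analysis on an arbitrary syndrome $\Ub=(\pib,\ub_1,\ub_2,\ub_3)^{tr}\in\F_2^{21}$ used in Theorem \ref{th5:-3}: split on whether $\pib$ is $\zb_{r_0}$, a single $\hb_j$, or a sum $\hb_i+\hb_j$, and in each branch produce a representation of $\Ub$ as a sum of at most $3$ surviving columns, treating the deleted-column branches via the compensating $\D_2(3)$-columns exactly as the linear dependence $\hb_6+\hb_9+\hb_{16}=\zb_9$ permits.

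Finally, $\Delta(21,3)=5$ follows by comparison with the best previously known bound at $r=21$; since $21=3t-2$ with $t=\lceil 23/3\rceil$ is not of the form covered by the families in Theorem \ref{th6:KnownFamR=3} giving $\gamma(r)$ or $\psi(r)$, the relevant prior bound comes from $\varphi(r)$ or an interpolation of the $r=3t-2$ family, and $303$ undercuts it by $5$; this is a one-line arithmetic check against Notation \ref{not6:psi}.

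The hard part will be the shortening step: one must choose the deleted $\xi_1=0$ columns and reassign subsets so that simultaneously (a) the count drops by exactly $15$, (b) covering radius stays $3$ — i.e. every syndrome still has a representation avoiding the removed columns — and (c) the partition can be consolidated to precisely $35$ subsets with $\ell=2$ preserved. Getting all three to hold together, rather than just $R_C=3$, is where the genuine combinatorial work lies, and it is presumably why the authors state this as a separate theorem rather than a corollary of Theorem \ref{th6:DDLD95R=3}; I expect the actual proof to be done partly by computer search, as in Theorem \ref{th7:PartitionOK}.
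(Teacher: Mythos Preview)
Your approach has a genuine gap: you picked the wrong concatenating construction. You apply $\text{QM}_2^3$ and obtain $n=2^4(18+2)-2=318$, then propose to delete $15$ columns by an ad hoc shortening that you yourself flag as ``the hard part'' and expect to require computer search. But the paper simply applies $\text{QM}_4^3$ of Theorem~\ref{th6:DDLD95R=3}: here $\Bs=\F_{2^m}\cup\{*\}$, $\D=\D_3$, and the only hypothesis is $n_0\ge 2^m+1\ge p(\Hb_0,\ell_0)$. With $n_0=18$, $m=4$, $p(\Hb_0,1)=11$ we have $18\ge 17\ge 11$, so \eqref{eq6:any ell0CSIres} gives directly $n=2^4(18+1)-1=303$, $\ell=2$, and $p(\Hb_C,1)\le 2^{m+1}+3=35$. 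No shortening, no further case analysis, no computer search is needed; the $35$ subsets and the length $303$ fall out of the construction for free. Your entire second and fourth paragraphs are therefore unnecessary, and the ``genuine combinatorial work'' you anticipate does not exist.

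There is also an arithmetic error in your last paragraph: $r=21=3\cdot 7$ is of the form $3t$ with $t=7$, not $3t-2$. The relevant prior bound from Theorem~\ref{th6:KnownFamR=3} is the family $r=3t\ge 18$, $n=155\cdot 2^{(r-18)/3}-2$, which at $r=21$ gives $155\cdot 2-2=308$; hence $\Delta(21,3)=308-303=5$.
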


\begin{proof}
 We use Construction QM$_4^3$ of Theorem \ref{th6:DDLD95R=3}. As the code $C_0$ we take the $[18,9,3]_23,1$ code $C_{OK}$ with the (3.1)-partition $\Ps_{OK}$, see Theorems \ref{th6:OK189}, \ref{th7:PartitionOK}. We have $n_0=18>2^4+1>p(\Hb_0,1)=11$ that allows us to take $m=4$ in \eqref{eq6:any ell0CSIinp}. By \eqref{eq6:any ell0CSIres}, we obtain a new $[n,n-21]_23,2$ code $C$ with $n=303$ and $p(\Hb_C,1)\le2^{m+1}+3$. By Theorem \ref{th6:KnownFamR=3}, the known value is $n=155\cdot2^{(21-18)/3}-2=308$.
\end{proof}

\begin{theorem}\label{th7:ell0=0R=3}
$\mathbf{Construction~QM_5^3.}$ In Construction \emph{QM} of Section~$\ref{subsec32:QM}$, let the starting code $C_0$  be an $[n_0,n_0-r_0]_23,0$ code
with a parity check matrix $\Hb_0$ of the form \eqref{eq32:H0} admitting a $(3,0)$-partition $\Ps_0$
into $p(\Hb_0,0)$ subsets. Let $\Vc_{2m}$ be an $[n_{2m},n_{2m}-2m]_22$  code with a parity check matrix $\Hcb_{2m}$ admitting a $2$-partition $\Ps_{2m}$ into $p(\Hcb_{2m})$ subsets.
We define a new $[n,n-r]_2R_C,\ell_C$ code $C$ by a parity check
$r\times n$ matrix $\Hb_C$ of the form \eqref{eq32:QM-H}--\eqref{eq32:Hcb2m}, where the indicator set $\Bs$, parameter $m$, and submatrix $\D$ are as follows:
\begin{equation}
\emph{QM}_5^3:~\Bs\subseteq\F_{2^m}^*,~2^m-1\ge p(\Hb_0,0),~m\ge2,~
 \D=\D_4.\label{eq7:ell0=0inpR=3}
\end{equation}
Then the new code $C$ is an $[n, n - r]_23,\ell_C$ code of covering radius $3$ with the parameters:
\begin{align}
& \emph{QM}_5^3:~n=2^m(n_0+1)+n_{2m}-1,~r = r_0 + 3m,~\ell_C\ge1;\label{eq7:ell0=0resR=3}\db\\
&\T{if any column of $\F_2^{2m}$ is a sum of $2$ or $3$ columns of $\Hcb_{2m}$ then }\ell_C=2;\label{eq7:part-ellC=2}\db\\
&p(\Hb_C,0)\le p(\Hb_0,0)+p_{2m}(\Hcb_{2m})+1,~ p(\Hb_C,1)= p(\Hb_C,0)+2,~p(\Hb_C,2)\le n.\label{eq7:part-ellR=3}
\end{align}
If there exists a triple of linearly dependent columns of $\Hb_0$ or $\Hcb_{2m}$  belonging to distinct subsets of the partitions $\Ps_0$ or $\Ps_{2m}$, respectively, then $p(\Hb_C,1)= p(\Hb_C,0)$.
 \end{theorem}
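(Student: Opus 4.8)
The plan is as follows. The values $n=2^m(n_0+1)+n_{2m}-1$ and $r=r_0+3m$ are read off Construction QM directly: with $\D=\D_4$ the block $\D$ has $(2^m-1)+n_{2m}$ columns, so $N_m=2^m-1+n_{2m}$, $n=2^mn_0+N_m$, and the $R=3$ new coordinate groups of size $m$ give $r=r_0+3m$. Since the input \eqref{eq7:ell0=0inpR=3} is exactly the input \eqref{eq6:ell0=0inp} of Construction QM$_1^3$ together with the harmless extra requirement $m\ge2$, Theorem \ref{th6:DDLD95R=3} already delivers $R_C=3$ and $\ell_C\ge1$; what remains is \eqref{eq7:part-ellC=2}--\eqref{eq7:part-ellR=3}, which I would obtain by rerunning the covering argument while keeping track of the partition.

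First I would fix the ``natural'' $(3,0)$-partition $\Ps_C$ of $\Hb_C$: one class per class of $\Ps_0$ (collecting, for every $\hb_j$ in that class, all columns of $\Ab(\hb_j,\beta_j)$), one class equal to the whole $\Wb_m$-part of $\D_4$, and $p(\Hcb_{2m})$ classes obtained by lifting $\Ps_{2m}$ to the $\Hcb_{2m}$-part of $\D_4$; this already gives $p(\Hb_C,0)\le p(\Hb_0,0)+p(\Hcb_{2m})+1$. Then I would verify $\Ps_C$ is a $3$-partition on an arbitrary syndrome $\Ub=(\pib,\wb_1,\wb_2,\wb_3)^{tr}$, $\pib\in\F_2^{r_0}$, $\wb_i\in\F_2^{m}$: since $\Hb_0$ is a $(3,0)$-object, $\pib$ is a sum of $0,1,2$ or $3$ columns of $\Hb_0$ lying in distinct classes of $\Ps_0$, and for two or three of them one solves the associated $2\times2$ or $3\times3$ Vandermonde system in the indicator variables $\xi$ attached to the chosen $\Ab$-columns --- nonsingular because the indicators are nonzero ($\Bs\subseteq\F_{2^m}^*$) and pairwise distinct --- mopping up, in the two-column case, the leftover first $m$ coordinates with at most one column of $\Wb_m$. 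For $\pib=\hb_j$ one uses the $\Ab(\hb_j,\beta_j)$-column with $\xi=\wb_1$ and covers the residual $(\wb_2+\beta_j\wb_1,\wb_3+\beta_j^2\wb_1)\in\F_2^{2m}$ by at most two columns of $\Hcb_{2m}$ (covering radius $2$); for $\pib=\zb_{r_0}$ one covers $\wb_1$ by at most one column of the Hamming code $\Wb_m$ and $(\wb_2,\wb_3)$ by at most two columns of $\Hcb_{2m}$. All columns used lie in distinct classes of $\Ps_C$.

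For $\ell_C=2$ under the hypothesis that every vector of $\F_2^{2m}$ is a sum of two or three columns of $\Hcb_{2m}$, I would single out exactly the syndromes for which the above yields fewer than two columns --- namely the zero vector and the columns of $\Hb_C$ itself: the columns of the blocks $\Ab(\hb_j,\beta_j)$ (the syndromes $(\hb_j,\wb_1,\beta_j\wb_1,\beta_j^2\wb_1)^{tr}$), of the $\Wb_m$-part, and of the $\Hcb_{2m}$-part --- and show that after passing to a suitable refinement of $\Ps_C$ with two extra classes each of them becomes a sum of two or three columns in distinct classes; the hypothesis on $\Hcb_{2m}$ furnishes the required triples for the zero vector and the $\Hcb_{2m}$-columns, while for the remaining columns one combines (using $m\ge2$) columns of one block with columns of $\Wb_m$ and $\Hcb_{2m}$. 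This gives $p(\Hb_C,1)=p(\Hb_C,0)+2$, and $p(\Hb_C,2)\le n$ is then automatic once $C$ is known to be a $(3,2)$-object. Finally, if $\Hb_0$ or $\Hcb_{2m}$ already carries three linearly dependent columns lying in distinct classes of $\Ps_0$ or $\Ps_{2m}$, their images form three linearly dependent columns of $\Hb_C$ lying in distinct classes of $\Ps_C$ itself, so one can treat the zero syndrome and the degenerate columns without splitting, giving $p(\Hb_C,1)=p(\Hb_C,0)$.

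I expect the main obstacle to be precisely this last upgrade in the case $\pib=\hb_j$: showing that, under the $\Hcb_{2m}$-hypothesis, the columns of the blocks $\Ab(\hb_j,\beta_j)$ can be rewritten as sums of two or three columns from distinct classes, pinning down the refinement that costs exactly two extra classes, and recovering the ``no splitting'' case from a linearly dependent triple. That is where the characteristic-$2$ linear algebra --- conditions of the shape $(\beta_a+\beta_j)(\beta_b+\beta_j)\neq0$ for columns lying in distinct classes --- must be checked and where the assumption $m\ge2$ is genuinely needed.
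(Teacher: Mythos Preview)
Your case analysis for $R_C=3$ and the natural partition $\Ps_C$ giving $p(\Hb_C,0)\le p(\Hb_0,0)+p(\Hcb_{2m})+1$ match the paper. The gap is in how you treat $\ell_C=2$ and $p(\Hb_C,1)$, which you have fused into one argument; in the paper they are completely independent.

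For $p(\Hb_C,1)$ the paper's argument is unconditional (it does not use the hypothesis on $\Hcb_{2m}$) and concerns only the zero syndrome: every nonzero $\Ub$ already gets $\ge1$ column in your own case split, so the sole obstruction to $\Ps_C$ being a $(3,1)$-partition is $\Ub=\zb_r$. One fixes this by splitting the single $\Wb_m$-class into three parts so that some $w_1+w_2+w_3=0$ lands in three different parts; that is the whole content of the ``$+2$''. Your route through the $\ell_C=2$ paragraph makes the bound look conditional and hides this trivial reason. The final clause of the theorem is then immediate: if $\Hb_0$ or $\Hcb_{2m}$ already contains a linearly dependent triple in distinct classes, the zero syndrome is handled without any splitting.

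For $\ell_C=2$ your plan --- rewriting a block column $(\hb_j,\xi,\beta_j\xi,\beta_j^2\xi)^{tr}$ as one column of $\Ab(\hb_j,\beta_j)$ plus columns of $\Wb_m$ and $\Hcb_{2m}$ --- does not work: matching coordinates forces the $\Hcb_{2m}$-summand to equal $(\beta_j w,\beta_j^2 w)^{tr}$ for some $w\ne0$, and there is no reason this particular vector is a column of $\Hcb_{2m}$. The paper instead drops the attempt at a small $(3,2)$-partition. For this syndrome it picks three \emph{distinct} $\xi_{i_1},\xi_{i_2},\xi_{i_3}\in\F_{2^m}$ with $\xi_{i_1}+\xi_{i_2}+\xi_{i_3}=\ub_1$ (possible since $m\ge2$) and writes $\Ub=\sum_{u=1}^3(\hb_j,\xi_{i_u},\beta_j\xi_{i_u},\beta_j^2\xi_{i_u})^{tr}$, three distinct columns of the \emph{same} block. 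Together with the analogous tricks for $\pib=\zb_{r_0}$ (write $w=w_1+w_2$ inside $\Wb_m$; invoke the hypothesis on $\Hcb_{2m}$ for a single $\Hcb_{2m}$-column), this shows every syndrome is a sum of two or three \emph{distinct} columns of $\Hb_C$, so the trivial partition is a $(3,2)$-partition and $p(\Hb_C,2)\le n$. A small $(3,2)$-partition is genuinely out of reach here: Remark~\ref{rem7:ell=2} records that a nontrivial one already needs $5p(\Hb_0,0)+n_{2m}+3$ classes, so your ``$+2$'' target was not attainable.
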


\begin{proof}
We prove the assertions considering the representation as a sum of $\le3$ columns of $\Hb_C$ for an arbitrary column $\Ub=(\pib,\ub_1,\ub_2,\ub_3)^{tr}\in\F_2^{\,r}$, where $\pib\in\F_2^{\,r_0}$, $\ub_1,\ub_2,\ub_3\in\F_2^{\,m}$.

\begin{description}
  \item[(1)] $\pib=\hb_i+\hb_j+\hb_k$; $\hb_i$, $\hb_j$, $\hb_k$ belong to distinct subsets of $\Ps_0$; $\beta_i,\beta_j,\beta_k$ are distinct.

  We find $\xb,\yb,\zbl$ solving the linear system $\beta_i^{\delta-1} \xb+\beta_j^{\delta-1} \yb+\beta_k^{\delta-1} \zbl=\ub_\delta$, $\delta=1,2,3$.
  Now $\Ub= (\hb_i,\xb,\beta_i \xb,\beta_i^2 \xb)^{tr} +(\hb_j,\yb,\beta_j \yb,\beta_j^2 \yb)^{tr}+(\hb_k,\zbl,\beta_k \zbl,\beta_k^2 \zbl)^{tr}$.

  \item[(2)] $\pib=\hb_i+\hb_j$; $\hb_i$ and $\hb_j$ belong to distinct subsets of $\Ps_0$; $\beta_i\ne\beta_j$, $\beta_i,\beta_j\ne0$.

  We find $\xb,\yb$ solving the linear system $\beta_i^{\delta-1} \xb+\beta_j^{\delta-1} \yb=\ub_\delta$,  $\delta=2,3$.
  Now $\Ub=(\hb_i,\xb,\beta_i \xb,\beta_i^2 \xb)^{tr} +(\hb_j,\yb,\beta_j \yb,\beta_j^2 \yb)^{tr}+(\zb_{r_0,}\wb,\zb_{2m})^{tr}$ where $\wb=\xb+\yb+\ub_1\in\Wb_m$. If $\xb+\yb=\ub_1$ then the addend $(\zb_{r_0,}\wb,\zb_{2m})^{tr}$ is absent.

\item[(3)] $\pib=\hb_j$.

  Let $(\beta_j\ub_1,\beta_j^2\ub_1)^{tr}\ne(\ub_2,\ub_3)^{tr}$. Then $\Ub=(\hb_j,\ub_1,\beta_j\ub_1,\beta_j^2\ub_1)^{tr} +\boldsymbol{S}$, where $\boldsymbol{S}$ is a single column or a sum of two columns of $\Hcb_{2m}$ belonging to distinct subsets of $\Ps_{2m}$.

Let $(\beta_j\ub_1,\beta_j^2\ub_1)^{tr}=(\ub_2,\ub_3)^{tr}$. For the new code $C$, we consider $\ell_C\in\{0,1,2\}$. For $\ell_C\in\{0,1\}$, it is sufficient to put $\Ub=(\hb_j,\ub_1,\beta_j\ub_1,\beta_j^2\ub_1)^{tr}$.
  To provide $\ell_C=2$, we find distinct elements $\xi_{i_1},\xi_{i_2},\xi_{i_3}$ of $\F_{2^m}$, see \eqref{eq32:Aj}, \eqref{eq32:xi}, such that
  $\xi_{i_1}+\xi_{i_2}+\xi_{i_3}=\ub_1$ (the needed $\xi_{i_u}$ exist for $m\ge2$). Now  $\Ub=\sum_{u=1}^3(\hb_j,\xi_{i_u},\beta_j\xi_{i_u},\beta_j^2\xi_{i_u})^{tr}$.

\item[(4)] $\pib=\zb_{r_0}$.

  The last $3m$ rows of $\D$ form a parity check matrix of the direct sum \cite{GrahSlo} of the $[2^m-1,2^m-1-m,3]_21$ Hamming code and the code $\Vc_{2m}$ that gives a code of covering radius 3. We
consider the representation of $\Ub$ as a sum of columns of $\D$; this helps us to estimate the cardinality of  the $(3,\ell)$-partitions of $\Hb_C$.

Let $\ub_1=\wb\in\Wb_m$.

If $(\ub_2, \ub_3)^{tr}\ne(\zb_{2m})^{tr}$, then $\Ub=(\zb_{r_0},\wb,\zb_{2m})^{tr} +\boldsymbol{S}$, where $\boldsymbol{S}$ is a single column or a sum of two columns of $\Hcb_{2m}$ belonging to distinct subsets of $\Ps_{2m}$.

If $(\ub_2, \ub_3)^{tr}=(\zb_{2m})^{tr}$, then for $\ell_C\in\{0,1\}$, it is sufficient to put $\Ub=(\zb_{r_0},\wb,\zb_{2m})^{tr} $.
  To provide $\ell_C=2$, we find $\wb_1,\wb_2\in\Wb_m$ such that $\wb_1+\wb_2=\wb$ (obviously, the needed $\wb_i$ exist). Now  $\Ub=\sum_{i=1}^2(\zb_{r_0},\wb_i,\zb_{2m})^{tr}$.

Let $\ub_1=\zb_m$.

If $(\ub_2, \ub_3)^{tr}$ is not a column of $\Hcb_{2m}$, then $(\ub_2, \ub_3)^{tr}$ is a sum of 2 columns of $\Hcb_{2m}$.

If $(\ub_2, \ub_3)^{tr}$ is a column, say $\boldsymbol{\lambda}_{2m}$, of $\Hcb_{2m}$, then for $\ell_C\in\{0,1\}$, it is sufficient to put $\Ub=(\zb_{r_0+m},\boldsymbol{\lambda}_{2m})^{tr} $. To provide $\ell_C=2$, we
should represent $\boldsymbol{\lambda}_{2m}$ as a sum of two or three columns of $\Hcb_{2m}$.
\end{description}

Thus, we have proved that any column $\Ub \in\F_2^{\,r}$  can be represented as  a sum of at most three columns of $\Hb_C$. This means that the covering radius of the new code $C$ is $R_C=3$.

To estimate $p(\Hb_C,0)$ for the matrix $\Hb_C$ of the form \eqref{eq32:QM-H}--\eqref{eq32:Hcb2m}, \eqref{eq7:ell0=0inpR=3},
we partition the columns of $\Hb_C$ (except $\D$) in accordance with the partition
$\Ps_0$ of $\Hb_0$: if columns $\hb_i, \hb_j$ of $\Hb_0$ belong to distinct subsets of
$\Ps_0$, then the columns of submatrices $\Ab(\hb_i,\beta_i),\Ab(\hb_j,\beta_j)$ belong to distinct subsets
of the partition of $\Hb_C$. Also, taking into account the proof above, we partition the columns of $\D$ \eqref{eq7:ell0=0inpR=3} into $p(\Hcb_{2m})+1$ subsets so that the submatrix $\Wb$ forms one subset. This implies $p(\Hb_C,0)\le p(\Hb_0,0)+p(\Hcb_{2m})+1$ \eqref{eq7:part-ellR=3}.

 Now we note that $\Ub=(\zb_{r_0+3m})^{tr}$ can always be represented as a sum of three columns of $\Wb_m$; this is necessary for $\ell_C\ge1$, see Section \ref{subsec31:capsul}. To provide this, we should partition $\Wb_m$ into three subsets, that explains the inequality  $p(\Hb_C,1)\le p(\Hb_C,0)+2$ in~\eqref{eq7:part-ellR=3}. We could avoid the above partition, if there exists a triple of linearly dependent columns of $\Hb_0$ or $\Hcb_{2m}$ belonging to distinct subsets of the corresponding partitions $\Ps_0$ or $\Ps_{2m}$; this explains the case $p(\Hb_C,1)= p(\Hb_C,0)$.

 Finally, by the proof above, if any column of $\F_2^{2m}$ is equal to a sum of two or three columns of $\Hcb_{2m}$ then $\ell_C=2$. For simplicity, in this case we can use the trivial partition and put $p(\Hb_C,2)\le n$.
 \end{proof}

\begin{remark}\label{rem7:ell=2}
  Really, we can prove that if in Construction QM$_5^3$ we have $\ell_C=2$, see \eqref{eq7:part-ellC=2}, then for $m\ge3$, the parity check matrix $\Hb_C$ admits a $(3,2)$-partition into $p(\Hb_C,2)=5p(\Hb_0,0)+n_{2m}+3$ subsets. We do not give the proof to save the space.
 \end{remark}

\begin{lemma}\label{lem7:sum3col}
We consider the $[51,41,3]_22$ code $C_{KR}$ with the parity check matrix $\Hb_{KR}$ of Theorems \emph{\ref{th4:KR code}, \ref{th5:PartitionKR}}.
 Any column of $\F_2^{10}$ is equal to a sum of three columns of $\Hb_{KR}$.
\end{lemma}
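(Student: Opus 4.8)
The plan is to prove that every vector $\ov\in\F_2^{10}$ equals $\hb_x+\hb_y+\hb_z$ for three columns of $\Hb_{KR}$ with pairwise distinct indices, distinguishing three cases: $\ov=\zb_{10}$; $\ov$ is a column of $\Hb_{KR}$; $\ov$ is nonzero and is not a column of $\Hb_{KR}$. Throughout I would use that $R(C_{KR})=2$, so every nonzero vector of $\F_2^{10}$ is a sum of one or two columns, and that $d(C_{KR})=3$, so the columns of $\Hb_{KR}$ are nonzero and pairwise distinct. For $\ov=\zb_{10}$ the claim is immediate from Theorem~\ref{th5:PartitionKR}(ii), which gives $\hb_5+\hb_{27}+\hb_{29}=\zb_{10}$.

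Next I would handle the generic case: $\ov$ nonzero and not a column of $\Hb_{KR}$. Then $\ov$ is a sum of at most two columns, hence, not being zero or a single column, $\ov=\hb_a+\hb_b$ with $a\neq b$. Consider the $49$ vectors $\hb_a+\hb_b+\hb_x$ for $x\notin\{a,b\}$; each is nonzero (otherwise $\hb_x=\ov$ would be a column) and so, by $R(C_{KR})=2$, is a sum of one or two columns. If all $49$ of them were single columns, then $\ov+\hb_j$ would be a single column for \emph{every} index $j\in\{1,\dots,51\}$ (using $\ov=\hb_a+\hb_b$ when $j\in\{a,b\}$); writing $\ov+\hb_j=\hb_{d(j)}$ we get $\ov=\hb_j+\hb_{d(j)}$ with $d(j)\neq j$ and $d(d(j))=j$, so $j\mapsto d(j)$ would be a fixed-point-free involution of $\{1,\dots,51\}$, impossible since $51$ is odd. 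Hence $\hb_a+\hb_b+\hb_{x_0}=\hb_y+\hb_z$ for some $x_0\notin\{a,b\}$ with $y\neq z$; since $\ov=\hb_a+\hb_b$ is not a column we must have $x_0\notin\{y,z\}$, so $\ov=\hb_{x_0}+\hb_y+\hb_z$ is a sum of three distinct columns.

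It remains to treat $\ov=\hb_c$ for $c=1,\dots,51$. Writing $\hb_c=\hb_x+\hb_y+\hb_z$ with distinct indices forces $c\notin\{x,y,z\}$, so this case is equivalent to asking that every coordinate $c$ lie in the support of some weight-$4$ codeword of $C_{KR}$ (equivalently, that the coset of syndrome $\hb_c$ contain a weight-$3$ vector). I would verify this by direct inspection of the explicit matrix $\Hb_{KR}$ of \eqref{eq4:HKR}: exhibiting for each $c$ three explicit columns with sum $\hb_c$, or checking by computer that the supports of the weight-$4$ codewords cover all $51$ positions. This is the main obstacle, and the only step that genuinely depends on the particular code $C_{KR}$: it does not follow formally from $R(C_{KR})=2$ and $d(C_{KR})=3$, since an abstract $[51,41,3]_22$ code might have a coordinate lying in no weight-$4$ codeword. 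Combining the three cases yields that every $\ov\in\F_2^{10}$ is a sum of three columns of $\Hb_{KR}$.
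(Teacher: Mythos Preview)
Your proposal is correct in the parts you argue theoretically, but note that the paper's own proof is simply the one-line statement ``We have proved the assertion by computer search.'' There is no structural argument in the paper at all; the lemma is established purely by exhaustive verification over all $2^{10}$ vectors.

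Your approach is therefore genuinely different and more informative. The parity argument in your second case is neat and valid: if every $\ov+\hb_j$ were itself a column, the induced map $j\mapsto d(j)$ would be a fixed-point-free involution on a $51$-element set, which is impossible, so some $\ov+\hb_{x_0}$ is a sum of two columns distinct from $\hb_{x_0}$. This reduces the verification burden from all $1024$ vectors to the $51$ column vectors, and you correctly identify that this residual case (every coordinate lying in the support of a weight-$4$ codeword) is the only piece that genuinely depends on the specific matrix $\Hb_{KR}$ rather than on the parameters $[51,41,3]_22$ alone. What your route buys is insight into \emph{why} the lemma is nearly automatic and where the code-specific content actually lies; what the paper's route buys is brevity, since a full computer check is no harder to run than the partial one you still need.
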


\begin{proof}
We have proved the assertion by computer search.
\end{proof}

 Let $\Phi(r)$ be as in \eqref{eq5:notEvenR=2New1}; we introduce the notations.
 \begin{align}
&\Upsilon(r)=\Upsilon(3t-1)\triangleq 819\cdot2^{(r-26)/3}-1=819\cdot2^{t-9}-1,~ r=26,~r=3t-1\ge44.  \label{eq7:notatUpsR3}\db\\
&\widehat{\Upsilon}(r)\triangleq24\cdot2^{(r-11)/3}-1+\Phi(2(r-11)/3)=820\cdot2^{(r-26)/3}-2,~r=38,41.
  \label{eq7:notatUpshatR3}
 \end{align}

\begin{theorem}\label{th7:r=26,38,41}
Let the covering density $\mu$ be as in \eqref{eq11:CovDensity}.
There are $3$ new $[n,n-r,3]_23,\ell$ codes $C$ with a parity check matrix $\Hb_C$ of the form \eqref{eq32:QM-H}--\eqref{eq32:Hcb2m}, \eqref{eq7:ell0=0inpR=3} and parameters as in \eqref{eq7:r=26}--\eqref{eq7:r=41}.
\begin{align}
&r=26,~n=\Upsilon(26)=818,~\mu\thickapprox1.35935,~\ell=2,~p(\Hb_C,0)=p(\Hb_C,1)=35,\label{eq7:r=26}\db\\
&\phantom{r=26,~}p(\Hb_C,2)=\Upsilon(26),~\ell_2(26,3)\le\Upsilon(26),~\Delta(26,3)=2.\notag\db\\
&r=38,~n=\widehat{\Upsilon}(38)=13118,~\mu\thickapprox1.36871,~\ell=1,~p(\Hb_C,0)=57,\label{eq7:r=38}\db\\
&\phantom{r=38,~}p(\Hb_C,1)=59;~\ell_2(38,3)\le\widehat{\Upsilon}(38),~\Delta(38,3)=2^5.\notag\db\\
&r=41,~n=\widehat{\Upsilon}(41)=26238,~\mu\thickapprox1.36902,~\ell=1,~ p(\Hb_C,0)=89,\label{eq7:r=41}\db\\
&\phantom{r=41,~}p(\Hb_C,1)=91;~ \ell_2(41,3)\le\widehat{\Upsilon}(41),~\Delta(41,3)=2^5+1.\notag
\end{align}
\end{theorem}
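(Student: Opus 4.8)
# Proof Proposal for Theorem \ref{th7:r=26,38,41}

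The plan is to apply Construction QM$_5^3$ from Theorem \ref{th7:ell0=0R=3} three times, each with an appropriately chosen $(3,0)$-object as the starting code $C_0$, an appropriately chosen $[n_{2m},n_{2m}-2m]_22$ code $\Vc_{2m}$, and a carefully selected value of $m$. In each case the arithmetic for $n$, $r$, and $\mu$ is a direct substitution into \eqref{eq7:ell0=0resR=3} and \eqref{eq11:CovDensity}, while the nontrivial content is (a) verifying the hypothesis $2^m-1\ge p(\Hb_0,0)$, (b) identifying the correct value of $\ell_C$ via \eqref{eq7:part-ellC=2} and \eqref{eq7:part-ellR=3}, and (c) pinning down the partition sizes $p(\Hb_C,0)$, $p(\Hb_C,1)$, and (when $\ell=2$) $p(\Hb_C,2)$.

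For the $r=26$ code in \eqref{eq7:r=26}: I would take as $C_0$ the $[18,9,3]_23$ code $C_{OK}$, which by Theorem \ref{th7:PartitionOK} is a $[18,9,3]_23,1$ code (hence also a $3,0$ code) with $p(\Hb_{OK},0)\le p(\Hb_{OK},1)\le 11$, and choose $m=4$, so that $2^m-1=15\ge 11=p(\Hb_0,0)$. Then $r=r_0+3m=9+12=21$? — no: to reach $r=26$ I instead start from an $[n_0,n_0-14]_23,0$ code, so more likely $C_0$ is obtained by a preliminary step, or $r_0+3m=26$ with $m=4$ and $r_0=14$. I would use a suitable $[n_0,n_0-14]_23,0$ starting code (for instance one derived earlier in the paper) together with $\Vc_{2m}=\Vc_8$ chosen to be the $[51,41]_22$ code $C_{KR}$ with its $2$-partition $\Ps_{KR}$ of size $11$ (Theorem \ref{th5:PartitionKR}); then $n=2^4(n_0+1)+51-1$, and choosing $n_0$ so that this equals $\Upsilon(26)=818$ forces $n_0$. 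Since by Lemma \ref{lem7:sum3col} every column of $\F_2^{10}$ is a sum of three columns of $\Hb_{KR}$, the hypothesis of \eqref{eq7:part-ellC=2} is met, giving $\ell_C=2$ and $p(\Hb_C,2)\le n$; moreover, by Theorem \ref{th5:PartitionKR}(ii) there is a linearly dependent triple of columns of $\Hb_{KR}=\Hcb_{2m}$ in distinct subsets of $\Ps_{2m}$, so the last clause of Theorem \ref{th7:ell0=0R=3} gives $p(\Hb_C,1)=p(\Hb_C,0)$, and the count $p(\Hb_C,0)\le p(\Hb_0,0)+p(\Hcb_8)+1=p(\Hb_0,0)+12$ should come out to $35$. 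The decrease $\Delta(26,3)=2$ follows by comparing $818$ with the previously known bound $822\cdot2^{(26-26)/3}-2=820$ — wait, that gives $2$, consistent.

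For the $r=38$ and $r=41$ codes in \eqref{eq7:r=38}, \eqref{eq7:r=41}: here the notation $\widehat{\Upsilon}(r)$ in \eqref{eq7:notatUpshatR3} already reveals the structure, $\widehat{\Upsilon}(r)=24\cdot2^{(r-11)/3}-1+\Phi(2(r-11)/3)$, so I would take $\Vc_{2m}$ to be the $[\Phi(2(r-11)/3),\ \Phi(2(r-11)/3)-2m]_22$ code from the family of Theorem \ref{th5:R=2NewInfFam} with $m=(r-11)/3$ (so $m=9$ for $r=38$, $m=10$ for $r=41$), and take $C_0$ to be a $[23,11]_23,0$ code so that $r_0+3m=11+3m=r$ — indeed $11+27=38$ and $11+30=41$. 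I would use a small $[23,11]_23,0$ starting code (e.g. one available in the literature or constructed as a preliminary step) with $p(\Hb_0,0)$ small enough that $2^m-1\ge p(\Hb_0,0)$, which is easy since $2^9-1=511$. Then $n=2^m\cdot 24 - 1 + \Phi(2m) = 820\cdot 2^{(r-26)/3}-2$ as claimed. The partition counts $p(\Hb_C,0)=p(\Hb_0,0)+p(\Hcb_{2m})+1$ with $p(\Hcb_{2m})$ read off from Tables \ref{tab5:LengFun r32R=2}, \ref{tab5:LengFun33r64R=2} should give $57$ and $89$; here I do not expect a linearly dependent triple of columns in distinct subsets to be readily available in the partitions of $\Vc_{2m}$, so I would instead partition $\Wb_m$ into three subsets and get $p(\Hb_C,1)=p(\Hb_C,0)+2$, i.e. $59$ and $91$. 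Since the $2$-partitions here are of the $q^m$-concatenated form, the "every column is a sum of $2$ or $3$ columns of $\Hcb_{2m}$" hypothesis need not hold, so $\ell=1$ rather than $2$. The decreases $\Delta(38,3)=2^5$ and $\Delta(41,3)=2^5+1$ follow by subtracting $\widehat{\Upsilon}(r)$ from $\gamma(r)=821\cdot2^{(r-26)/3}-1$ of \eqref{eq6:knownfam3t-1} (valid at $r=41$) and from the bound $822\cdot2^{(r-26)/3}-2$ at $r=38$.

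The main obstacle I anticipate is the bookkeeping of the partition sizes and the correct identification of the starting codes $C_0$: the theorem statement lists exact values $p(\Hb_C,0)=35,57,89$, which requires knowing exactly the $(3,0)$-partition sizes of the small starting codes and the $2$-partition sizes of the $\Vc_{2m}$ codes (the latter from the tables), and then checking that the inequality in \eqref{eq7:part-ellR=3} is in fact an equality in these instances — this demands either an explicit partition of the starting code or a cardinality lower bound argument ($p(\Hb_C,0)\ge$ something). A secondary subtlety is confirming $\ell=2$ versus $\ell=1$: for $r=26$ one must invoke Lemma \ref{lem7:sum3col} to get $\ell=2$, whereas for $r=38,41$ the concatenated structure of $\Vc_{2m}$ blocks that argument, so one must be careful to claim only $\ell=1$ there. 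Everything else — the values of $n$, $r$, and the numerical covering densities $\mu$ — is routine substitution.
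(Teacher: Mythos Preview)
Your treatment of $r=38$ and $r=41$ is structurally right: you correctly back out $r_0=11$, $m=(r-11)/3\in\{9,10\}$, and $\Vc_{2m}=[\Phi(2m),\Phi(2m)-2m]_22$ from the new family, and your partition arithmetic $p(\Hb_C,0)=p(\Hb_0,0)+p(\Hcb_{2m})+1$ with $p(\Hcb_{18})=2^5+1$, $p(\Hcb_{20})=2^6+1$ reproduces the values $57$ and $89$ once $p(\Hb_0,0)=23$. But you never actually name the starting code: the paper uses the \emph{perfect $[23,12,7]_23$ Golay code} with its trivial $(3,0)$-partition, so $n_0=23$, $r_0=11$, $p(\Hb_0,0)=23$. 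This is not an incidental detail---it is the only $[23,12]_23$ code available, and its perfection is what makes $n_0=23$ with $r_0=11$ possible at all. Your placeholder ``a small $[23,11]_23,0$ starting code'' (which is also a notational slip: the code is $[23,12]$, i.e.\ $[n_0,n_0-r_0]$ with $r_0=11$) leaves the main ingredient unspecified.

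For $r=26$ your analysis goes off the rails. The same Golay code is used here too, with $m=5$ (not $m=4$), so $2m=10$ and $\Vc_{2m}=\Vc_{10}$ is exactly the $[51,41]_22$ code $C_{KR}$ (which has codimension $10$, not $8$). Then $n=2^5\cdot 24+51-1=818$, $p(\Hb_C,0)=23+11+1=35$, and Theorem~\ref{th5:PartitionKR}(ii) together with Lemma~\ref{lem7:sum3col} give $p(\Hb_C,1)=p(\Hb_C,0)$ and $\ell_C=2$. Your attempt with $m=4$, $r_0=14$, and ``$\Vc_8=C_{KR}$'' is internally inconsistent (the codimension of $C_{KR}$ is $10$) and does not lead to the stated parameters. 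Once you recognise that the Golay code is the uniform choice of $C_0$ across all three cases with $m=5,9,10$, everything else---including your correct observations about $\ell=2$ vs.\ $\ell=1$ and the $\Delta(r,3)$ computations---falls into place exactly as you sketched.
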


\begin{proof}
  We apply Construction QM$_5^3$ of Theorem \ref{th7:ell0=0R=3}.  Let the starting code $C_0$  be the perfect $[23,12,7]_23,0$ Golay code with the parity check matrix $\Hb_0$. We use the trivial $(3,0)$-partition of $\Hb_0$. So, $n_0=23$, $r_0=11$, $\ell_0=0$, $p(\Hb_0,0)=23$.

  By \eqref{eq7:ell0=0inpR=3}, we can put $m=5,9,10$ that implies $r=26,38,41$. As $\Vc_{2m}$, we take the $[\Phi(2m),\Phi(2m)-2m]_22$ codes of Theorems \ref{th4:KR code}, \ref{th5:PartitionKR}, \ref{th5:r=18,28}, \ref{th5:n0Psi}, Lemma \ref{lem7:sum3col}, Table \ref{tab5:LengFun r32R=2}, where $\Phi(2m)$ is as in \eqref{eq5:notEvenR=2New1}. We have $\Phi(2\cdot5)=51$, $\Phi(2\cdot9)=831$, $\Phi(2\cdot10)=1663$, $p(\Hcb_{2\cdot5})=11$, $p(\Hcb_{2\cdot9})=2^5+1$, $p(\Hcb_{2\cdot10})=2^6+1$.

  By \eqref{eq7:ell0=0resR=3}, \eqref{eq7:notatUpsR3}, \eqref{eq7:notatUpshatR3}, we obtain $n=\Upsilon(26)$, if $r=26$, and $n=\widehat{\Upsilon}(r)$ if $r=38,41$. By \eqref{eq7:ell0=0resR=3}, $\ell=1$  if $r=38,41$. By \eqref{eq7:ell0=0resR=3}, \eqref{eq7:part-ellC=2}, and Lemma \ref{lem7:sum3col}, $\ell=2$  if $r=26$. By \eqref{eq7:part-ellR=3}, we obtain $p(\Hb_C,\ell)$ as in \eqref{eq7:r=26}--\eqref{eq7:r=41}.

  The decrease $\Delta(r,3)$ follows from $\ell_2(r,3)\le n$ and Theorem \ref{th6:KnownFamR=3}.
\end{proof}

\begin{theorem}\label{th7:newr=3t-1a}
Let the asymptotic covering density $\overline{\mu}(R)$ be as in \eqref{eq11:liminfdens}. Let $\Upsilon(r)$ be as in \eqref{eq7:notatUpsR3}. There exists a new infinite family of binary linear  $[n,n-r,3]_23,2$ codes with a parity check matrix $\Hb$ of the form \eqref{eq32:QM-H}--\eqref{eq32:Hcb2m} and parameters as in \eqref{eq7:InfFamR=3New}, \eqref{eq7:pHbInfFam}.
\begin{align}\label{eq7:InfFamR=3New}
&R=3,~r=3t-1,~r=26\T{ and }r\ge44,~t=9\T{ and }t\ge15,~\ell=2, \db\\
&n=819\cdot2^{(r-26)/3}-1=819\cdot2^{t-9}-1=\Upsilon(r),~\overline{\mu}(3)
\thickapprox1.36433;\notag\db\\
&p(\Hb,0)=p(\Hb,1)=35,~p(\Hb,2)=818\T{ if }r=26;\label{eq7:pHbInfFam}\db\\
&p(\Hb,1)=2^{(r-23)/3}\T{ if }r=44,47,50,53;~p(\Hb,2)=819\T{ if }r\ge56; \notag\db\\
&\ell_2(r,3)=\ell_2(3t-1,3)\le819\cdot2^{t-9}-1=\Upsilon(r),~\Delta(r,3)=2^{(r-23)/3}.
\notag
\end{align}
\end{theorem}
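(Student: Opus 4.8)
The plan is to build this infinite family iteratively, exactly as in the proof of Theorem \ref{th5:R=2NewInfFam}, but now using Construction QM$_5^3$ of Theorem \ref{th7:ell0=0R=3} together with the $R=2$ codes from the family of Theorem \ref{th5:R=2NewInfFam} in the role of the auxiliary codes $\Vc_{2m}$. First I would record the base cases already produced in Theorem \ref{th7:r=26,38,41}: for $r=26$ (i.e.\ $t=9$) we have the $[\Upsilon(26),\Upsilon(26)-26,3]_23,2$ code with $p(\Hb,0)=p(\Hb,1)=35$ and $p(\Hb,2)=818$, coming from the Golay starting code ($n_0=23$, $r_0=11$, $\ell_0=0$, trivial $(3,0)$-partition) and $\Vc_{10}=C_{KR}$ with $m=5$; Lemma \ref{lem7:sum3col} gives $\ell_C=2$ here. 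The value $\overline{\mu}(3)\thickapprox1.36433$ is then just the limit of the covering densities \eqref{eq11:CovDensity} along this family — a short computation from $n=819\cdot2^{t-9}-1$ and $r=3t-1$ — and the decrease $\Delta(r,3)=2^{(r-23)/3}$ follows by subtracting $\Upsilon(r)$ from the known bound $\gamma(r)=821\cdot2^{t-9}-1$ of \eqref{eq6:knownfam3t-1}, since $\gamma(r)-\Upsilon(r)=2\cdot2^{t-9}=2^{(r-23)/3}$; for the single sporadic value $r=26$ one compares instead with the bound $822\cdot2^{(r-26)/3}-2$ from Theorem \ref{th6:KnownFamR=3}, again yielding $\Delta=2$.

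Next I would carry out the iterative step. Keeping the Golay code as $C_0$ (so $r_0=11$, $p(\Hb_0,0)=23$) and letting $m$ range over the admissible values, the condition $2^m-1\ge p(\Hb_0,0)=23$ of \eqref{eq7:ell0=0inpR=3} forces $m\ge5$; for each such $m$ we plug in $\Vc_{2m}$ equal to the $[\Phi(2m),\Phi(2m)-2m,3]_22$ code of Theorem \ref{th5:R=2NewInfFam}, which exists precisely when $2m=10,18,20$ or $2m\ge28$, i.e.\ $m=5$ or $m\ge14$ (and $m=9,10$, already used in Theorem \ref{th7:r=26,38,41} for the sporadic $r=38,41$). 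By \eqref{eq7:ell0=0resR=3} and \eqref{eq7:notatUpsR3}, $n=2^m(23+1)+\Phi(2m)-1=24\cdot2^m+26\cdot2^{m-4}-1=819\cdot2^{m-4}-1$, and since $r=r_0+3m=3m+11=3t-1$ with $t=m+4$, this is exactly $\Upsilon(r)=819\cdot2^{t-9}-1$. The admissible $m$'s ($m=5$, then $m\ge14$) translate into $r=26$ and $r\ge44$, i.e.\ $t=9$ and $t\ge15$, matching \eqref{eq7:InfFamR=3New}. Each column of $\F_2^{2m}$ is a sum of two or three columns of $\Hcb_{2m}$ — this is the $R=2$ covering property of $\Vc_{2m}$ together with the minimum-distance-$3$ fact that any single column is itself not expressible as a shorter sum unless it is zero, so by \eqref{eq7:part-ellC=2} we get $\ell_C=2$ throughout.

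For the partition sizes in \eqref{eq7:pHbInfFam}, I would invoke \eqref{eq7:part-ellR=3}: $p(\Hb_C,0)\le p(\Hb_0,0)+p(\Hcb_{2m})+1=24+p(\Hcb_{2m})$, and then $p(\Hb_C,1)=p(\Hb_C,0)+2$ unless a triple of linearly dependent columns of $\Hb_0$ or of $\Hcb_{2m}$ sits in distinct subsets of the respective partition. For the small cases $r=44,47,50,53$ one uses the $2$-partitions of $\Hcb_{2m}$ from Tables \ref{tab5:LengFun r32R=2}, \ref{tab5:LengFun33r64R=2} with $p(\Hcb_{2m})=2^{m/2+\text{(small)}}+1$; tracking the bookkeeping gives $p(\Hb,1)=2^{(r-23)/3}$ as claimed. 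For $r\ge56$ the matrices are too large to control the small partitions, so one falls back on Remark \ref{rem7:ell=2} (or simply the trivial partition) to assert $p(\Hb,2)=819$, i.e.\ the full length, which is harmless since only the existence of a $(3,2)$-partition matters for further iteration. The main obstacle I anticipate is precisely this last point of bookkeeping: ensuring that the ``distinct-subsets triple'' hypothesis of Theorem \ref{th7:ell0=0R=3} can be met so that $p(\Hb,1)$ stays as small as $2^{(r-23)/3}$ in the range $r=44,\dots,53$ — this requires either exhibiting such a triple in each $\Hcb_{2m}$ (using the explicit linear dependences recorded in Theorems \ref{th5:PartitionKR}, \ref{th5:r=18,28}, e.g.\ $\hb_5=\hb_{27}+\hb_{29}$ lifted through the construction) or carefully merging the ``$\Wb_m$ into three subsets'' cost into an existing subset; everything else is a direct substitution into the already-proved Construction QM$_5^3$.
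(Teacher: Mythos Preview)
Your approach has a genuine gap: re-applying Construction QM$_5^3$ with the Golay code as $C_0$ for every $m$ does \emph{not} produce the $\Upsilon$ family. First, the arithmetic is off. With $n_0=23$, $r_0=11$ and $n_{2m}=\Phi(2m)=26\cdot2^{m-4}-1$, formula \eqref{eq7:ell0=0resR=3} gives
\[
n=2^m\cdot24+\bigl(26\cdot2^{m-4}-1\bigr)-1=(384+26)\cdot2^{m-4}-2=820\cdot2^{m-5}-2,
\]
not $819\cdot2^{m-4}-1$. This is exactly $\widehat{\Upsilon}(r)$ of \eqref{eq7:notatUpshatR3}, and it coincides with $\Upsilon(r)=819\cdot2^{m-5}-1$ only when $m=5$; for $m>5$ it is larger by $2^{m-5}-1$. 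Second, since $r=r_0+3m=11+3m$, the admissible values $m=5$ and $m\ge14$ translate into $r=26$ and $r\ge53$, not $r\ge44$; the cases $r=44,47,50$ (which would require $m=11,12,13$, where $\Phi(2m)$ is unavailable anyway) are simply not reached.

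The paper takes a different route. It uses QM$_5^3$ only once, to build the $[818,792,3]_23,2$ code at $r=26$, and then takes \emph{that} code as the new starting code $C_0$ (so $n_0=818$, $r_0=26$) for the constructions of Theorem~\ref{th6:DDLD95R=3}. For $r=44,47,50,53$ it applies QM$_4^3$ \eqref{eq6:any ell0CSIinp}, \eqref{eq6:any ell0CSIres} with $\ell_0=1$, $p(\Hb_0,1)=35$ and $m=6,7,8,9$ (the window $n_0\ge2^m+1\ge35$); for $r\ge56$ it applies QM$_3^3$ \eqref{eq6:ell0=2inp}, \eqref{eq6:ell0=2res} with $\ell_0=2$, $p(\Hb_0,2)=818$ and $m\ge10$. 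Both constructions give $n=2^m(n_0+1)-1=819\cdot2^m-1=\Upsilon(r)$ on the nose, with no extra $n_{2m}$ term, and the partition bounds $p(\Hb_C,1)\le2^{m+1}+3$ and $p(\Hb_C,2)\le p(\Hb_0,2)+1=819$ come for free from \eqref{eq6:any ell0CSIres}, \eqref{eq6:ell0=2res}. The crucial point you missed is that the $r=26$ code must be promoted to the role of $C_0$; the Golay code is used only once.
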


\begin{proof} For $r=26$, see Theorem \ref{th7:r=26,38,41}.
Then we use Construction QM$_4^3$ \eqref{eq6:any ell0CSIinp}, \eqref{eq6:any ell0CSIres}, of Theorem \ref{th6:DDLD95R=3}. As $C_0$ we take the $[\Upsilon(26)=818, \Upsilon(26)-26,3]_23,\ell$ code of Theorem \ref{th7:r=26,38,41}. We consider $\ell=1$ with
$p(\Hb_1,1)=35$ that allows us, by \eqref{eq6:any ell0CSIinp},  to put $m=6,7,8,9$ and obtain the needed codes for $r=44,47,50,53$.

Finally, we use Construction QM$_4^4$ \eqref{eq6:ell0=2inp}, \eqref{eq6:ell0=2res},  of Theorem \ref{th6:DDLD95R=3}, Again as $C_0$ we take the $[\Upsilon(26), \Upsilon(26)-26,3]_23,\ell$ code but now we consider $\ell=2$ with
$p(\Hb_1,1)=818$ that allows us, by \eqref{eq6:ell0=2inp}, to put $m\ge10$ and obtain the needed codes for $r\ge56$.

By \eqref{eq6:knownfam3t-1}, \eqref{eq7:InfFamR=3New}, $\Delta(r,3)=\gamma(r)-\Upsilon(r)=\cdot2^{(r-26)/3}(821-819)=2^{(r-23)/3}$.
 \end{proof}

In Tables \ref{tab7:LengFunrr33R=3} and \ref{tab7:LengFunr3464R=3}, the parameters $n,r$, and the covering density \eqref{eq11:CovDensity} of the best (as far as the authors know) binary linear covering $[n,n-r]_23$ codes of covering radius $R=3$ and redundancy (codimension) $3\le r\le64$  are written. The values of $n$ give an upper bound on the length function so that $\ell_2(r,3)\le n$; the star * notes the exact value of $\ell_2(r,3)$. The known results of \cite{DavCovNewConstrCovCod,KaikRoseADSlike2003,OstKaikUpBndBin1998} that are not presented in the book \cite {CHLL-bookCovCod} are noted by~$\bullet$. The new results, obtained in this paper using Theorems \ref{th7:PartitionOK}--\ref{th7:ell0=0R=3}, \ref{th7:r=26,38,41}, \ref{th7:newr=3t-1a}, of Section~\ref{sec7:R=3new} are written in bold font and also are noted by the big star $\bigstar$.

 \begin{table}[htbp]
\caption{The parameters of the best linear $[n,n-r]_23$ covering codes, $r\le33$,
such that $\ell_2(r,3)\le n$; the known values with the references seen in Section \ref{sec6:R=3known}; the new values in bold font and with $\bigstar$ seen in Section~\ref{sec7:R=3new}; $p(\ell)=p(\Hb,\ell)$; $\Delta=\Delta(r,3)$}
\centering
  \begin{tabular}
  {r|c|r|c|l|c|c|c|c}\hline
$r$&for $n$&\multicolumn{1}{c|}{$n$}&references&density&$p(0)$&$p(1)$&$p(2)$&$\Delta$\\\hline
3 &&$3^{*}$ &\cite{CHLL-bookCovCod,GrahSlo}&1&&&\\
4 &$\varphi(4)$&$5^{*}$ &\cite{CHLL-bookCovCod,DDL-IEEE,GrahSlo}&1.62500 &3&5&\\
5 &&$6^{*}$ &\cite{CHLL-bookCovCod,GrahSlo}          &1.31250 &&&\\
6 &&$7^{*}$ &\cite{CHLL-bookCovCod,GrahSlo}          &1       &7&&\\
7 &$\varphi(7)$&$11^{*}$&\cite{CHLL-bookCovCod,DDL-IEEE,GrahSlo}&1.81250&7&8& \\
8 &&$14^{*}$&\cite{CHLL-bookCovCod,GrahSlo}          &1.83594&10&& \\
9&&$\bullet18^{*}$&\cite{BaichVavLengthFunBin,BrPl1990,CHLL-bookCovCod,OstKaikUpBndBin1998}&1.92969&&$\mathbf{11\bigstar}$&\\
10&&22      &\cite{GrahSlo}                          &1.75195&&&  \\
11&&$23^{*}$&\cite{Golay1949,GrahSlo}                &1      &23&&  \\ \hline
12&&$\bullet37$&\cite{CHLL-bookCovCod,LobsPleRevisTab1994,OstKaikUpBndBin1998}&2.06885&&&\\
13&&$\bullet52$      &\cite{KaikRoseADSlike2003}              &2.86609&&&\\
14&&63&\cite{CHLL-bookCovCod,Dav90PIT,DougJanCovRadCalc1991}&2.54688&&&\\
15&$\psi(15)$&$\bullet71$      &\cite{DavCovNewConstrCovCod}&1.82227&&&32\\
16&$\varphi(16)$&95&\cite{DDL-IEEE}  &2.18164&&&\\
17&&126&\cite{DDL-IEEE}  &2.54442&&&\\
18&&153     &\cite{DDL-IEEE}  &2.27760&&&\\
19&&205     &\cite{DDL-IEEE}  &2.73900&&&\\
20&&$\bullet254$&\cite{DavCovNewConstrCovCod}            &2.60486&&&\\
21&&$\mathbf{303}$&$\bigstar$  &\textbf{2.21090}&&\textbf{35}&&\textbf{5}\\
22&$\varphi(22)$&383&\cite{DDL-IEEE}    &2.23254&&&\\
23&&511&\cite{Dav90PIT}  &2.65112&&&\\
24&&618     &\cite{DDL-IEEE}  &2.34477&&&\\
25&$\varphi(25)$&767&\cite{DDL-IEEE}  &2.24124&&&\\
26&$\mathbf{\Upsilon(26)}$&\textbf{818}&$\bigstar$&\textbf{1.35935}&\textbf{35}&\textbf{35}&\textbf{818}&$\mathbf{2}$\\
27&&1215    &\cite{DDL-IEEE}  &2.22725&&&&\\
28&$\varphi(28)$&1535&\cite{DDL-IEEE}    &2.24561&&&\\
29&&1642&\cite{DDL-IEEE}  &1.37436&&&\\
30&$\psi(30)$&$\bullet2303$    &\cite{DavCovNewConstrCovCod}&1.89597&&&\\
31&$\varphi(31)$&3071&\cite{DDL-IEEE}    &2.24780&&&\\
32&&3286&\cite{DDL-IEEE}  &1.37687&&&\\
33&$\psi(33)$&$\bullet4607$&\cite{DavCovNewConstrCovCod}&1.89720&&&\\\hline
 \end{tabular}
  \label{tab7:LengFunrr33R=3}
\end{table}

\begin{table}[htbp]
\caption{The parameters of the best linear $[n,n-r]_23$ covering codes, $34\le r\le64$,
such that $\ell_2(r,3)\le n$; the known values with the references (ref.) seen in Section \ref{sec6:R=3known}; the new values in bold font and with $\bigstar$ seen in Section~\ref{sec7:R=3new}; $p(\ell)=p(\Hb,\ell)$; $\Delta=\Delta(r,3)$}
\centering
  \begin{tabular}
  {r|c|r|c|l|c|c|c|c}\hline
$r$&for $n$&\multicolumn{1}{c|}{$n$}&ref.&density&$p(0)$&$p(1)$&$p(2)$&$\Delta$\\\hline
34&$\varphi(34)$&6143 &\cite{DDL-IEEE}                         &2.24890&&&\\
35&&6574 &\cite{DDL-IEEE}&1.37812&&&\\
36&$\psi(36)$&$\bullet9215$ &\cite{DavCovNewConstrCovCod}            &1.89782&&&\\
37&$\varphi(37)$&12287&\cite{DDL-IEEE}                         &2.24945&&&\\
38&$\mathbf{\widehat{\Upsilon}(38)}$&\textbf{13118}&$\bigstar$&\textbf{1.36871}&\textbf{57}&\textbf{59}&&$\mathbf{2^5}$\\
39&$\psi(39)$&$\bullet18431$&\cite{DavCovNewConstrCovCod}            &1.89813&&&\\
40&$\varphi(40)$&24575&\cite{DDL-IEEE}&2.24973&&&\\
41&$\mathbf{\widehat{\Upsilon}(41)}$&\textbf{26238}&$\bigstar$&\textbf{1.36902}&\textbf{89}&\textbf{91}&&$\mathbf{33}$\\
42&$\psi(42)$&$\bullet36863 $  &\cite{DavCovNewConstrCovCod}&1.89828&&&\\
43&$\varphi(43)$&49151	&\cite{DDL-IEEE}&2.24986&&&\\
44&$\mathbf{\Upsilon(44)}$&\textbf{52415}&$\bigstar$&\textbf{1.36426}&&$\mathbf{2^7+3}$&&$\mathbf{2^7}$\\
45&$\psi(45)$&$\bullet73727$   &\cite{DavCovNewConstrCovCod}&1.89836&&& \\
46&$\varphi(46)$&98303&\cite{DDL-IEEE}&	2.24993   &&&\\
47&$\mathbf{\Upsilon(47)}$   &\textbf{104831}&$\bigstar$&\textbf{1.36429}&&$\mathbf{2^8+3}$&&$\mathbf{2^8}$\\
48&$\psi(48)$&$\bullet147455$   &\cite{DavCovNewConstrCovCod}&1.89840&&&\\
49&$\varphi(49)$&196607&\cite{DDL-IEEE}   &2.24997&&&\\
50&$\mathbf{\Upsilon(50)}$   &\textbf{209663}&$\bigstar$&\textbf{1.36431}&&$\mathbf{2^9+3}$&&$\mathbf{2^9}$\\
51&$\psi(51)$&$\bullet294911$   &\cite{DavCovNewConstrCovCod}&1.89842&&&\\
52&$\varphi(52)$&393215&\cite{DDL-IEEE}   &2.24998&&&\\
53&$\mathbf{\Upsilon(53)}$   &\textbf{419327}&$\bigstar$&\textbf{1.36432}&&$\mathbf{2^{10}+3}$&&$\mathbf{2^{10}}$\\
54&$\psi(54)$&$\bullet589823$   &\cite{DavCovNewConstrCovCod}&1.89843&&&\\
55&$\varphi(55)$&786431&\cite{DDL-IEEE}   &2.24999&&&\\
56&$\mathbf{\Upsilon(56)}$& \textbf{838655}  &$\bigstar$&\textbf{1.36433}&&&\textbf{819}&$\mathbf{2^{11}}$\\
57&$\psi(57)$&$\bullet1179647$   &\cite{DavCovNewConstrCovCod}&1.89843&&&\\
58&$\varphi(58)$&1572863&\cite{DDL-IEEE}   &2.25000&&&\\
59&$\mathbf{\Upsilon(59)}$   &\textbf{1677311}&$\bigstar$&\textbf{1.36433}&&&\textbf{819}&$\mathbf{2^{12}}$\\
60&$\psi(60)$&$\bullet2359295$   &\cite{DavCovNewConstrCovCod}&1.89844&&&\\
61&$\varphi(61)$&3145727&\cite{DDL-IEEE}   &2.25000&&&\\
62&$\mathbf{\Upsilon(62)}$   &\textbf{3354623}&$\bigstar$&\textbf{1.36433}&&&\textbf{819}&$\mathbf{2^{13}}$   \\
63&$\psi(63)$&$\bullet4718591$   &\cite{DavCovNewConstrCovCod}&1.89844&&&   \\
64&$\varphi(64)$&6291455 &\cite{DDL-IEEE}  &2.25000&&&   \\ \hline
 \end{tabular}
  \label{tab7:LengFunr3464R=3}
\end{table}

In the tables, for the new codes we give the sizes $p(\Hb,\ell)$ of a $(3,\ell)$-partitions of a parity check matrix $\Hb$ (columns $p(\ell)$) and the decrease $\Delta(r,3)$ (column $\Delta$) of the known upper bounds on $\ell_2(r,3)$, provided by the new results of Section \ref{sec7:R=3new}.
Also, we write the new result $p(\Hb_{OK},1;\Ps_{OK2})=11$, obtained in Theorem \ref{th7:PartitionOK}.
The known and new values of $n$ are taken from Sections \ref{sec6:R=3known} and \ref{sec7:R=3new}. Also, for the known $n$ we give the references (the columns ``references'' and  ``ref.'').

\section{The known results on binary linear codes of covering radius 4}\label{sec8:R=4known}
As far as the authors know, the infinite code family of Theorem \ref{th8:KnownFamR=4} has the smallest asymptotic density $\overline{\mu}(R)$ \eqref{eq11:liminfdens} between the known infinite families of $[n,n-4t]_24$ codes.

\begin{theorem}\label{th8:DDLR=4} \emph{\cite{Dav90PIT}, \cite[Theorem 5.1]{DDL-IEEE}}
$\mathbf{Constructions~QM_1^4-QM_3^4}$. In Construction \emph{QM} of Section~$\ref{subsec32:QM}$, let the starting code $C_0$  be an $[n_0,n_0-r_0]_24,\ell_0$ code
with a parity check matrix $\Hb_0$ of the form \eqref{eq32:H0}. We define a new $[n,n-r]_2R_C,\ell$ code $C$ with $r=r_0+4m$, $n=2^mn_0+N_m$ by a parity check $r\times n$ matrix $\Hb_C$ of the form \eqref{eq32:QM-H}--\eqref{eq32:varD}. Let the indicator set $\Bs$, parameter $m$, and $r\times N_m$ submatrix $\D$ be chosen by one of the ways \eqref{eq8:ell0=2inp}--\eqref{eq8:ell0=3inp}  which set specific  Constructions \emph{QM}$_1^4$--\emph{QM}$_3^4$.
\begin{align}
&\emph{QM}_1^4: ~\ell_0=2,~\Bs\subseteq\F_{2^m},\,2^m\ge p(\Hb_0,2),\,
 \D=\D_2(4).\label{eq8:ell0=2inp}\db\\
 &\emph{QM}_2^4: ~\ell_0=2,\,c=\lfloor(2^m+2)/3\rfloor,\,c+1\ge p(\Hb_0,2),\label{eq8:ell0=2inp2}\db\\
 &\phantom{\emph{QM}_2^4: ~}\D\T{ and }\Bs\T{ are as $D_{13}^{4m}$ and $\beta$ in \emph{\cite[Theorem 5.1, Case 1iii]{DDL-IEEE}}}.\notag\db\\
&\emph{QM}_3^4: ~\ell_0=3,\,\Bs\subseteq\F_{2^m},\,2^m\ge p(\Hb_0,3),\,
 \D=\D_1(4).\label{eq8:ell0=3inp}
 \end{align}
Then the new code $C$ is an $[n, n - r]_24,\ell$ code with $r = r_0 + 4m$, and parameters:
\begin{align}
 &\emph{QM}_1^4: ~n=2^m(n_0+2)-2,~\ell=3,~p(\Hb_C,2)\le p(\Hb_0,2)+2,\label{eq8:ell0=2res}\db\\
 &\phantom{\emph{QM}_1^4: ~}p(\Hb_C,3)\le 3p(\Hb_0,2)+2\T{ for }m\ge3.\notag\db\\
 &\emph{QM}_2^4: ~n=2^m(n_0+2)-3,~\ell\ge2.\label{eq8:ell0=2res2}\db\\
 &\emph{QM}_3^4: ~n=2^m(n_0+1)-1,~\ell=3,~p(\Hb_C,3)\le p(\Hb_0,3)+1.\label{eq8:ell0=3res}
\end{align}
\end{theorem}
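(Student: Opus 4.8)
The plan is first to read off $r=r_0+4m$ and $n=2^m n_0+N_m$ directly from the block form \eqref{eq32:QM-H}--\eqref{eq32:Aj} of $\Hb_C$: each $\Ab(\hb_j,\beta_j)$ supplies $2^m$ columns, and $N_m$ is the number of columns of $\D$, namely $2^{m+1}-2$ for $\D_2(4)$, $2^{m+1}-3$ for the matrix $D_{13}^{4m}$ of $\mathrm{QM}_2^4$, and $2^m-1$ for $\D_1(4)$, which gives the three values of $n$ in \eqref{eq8:ell0=2res}--\eqref{eq8:ell0=3res}. The claims on the minimum distance and on the object level $\ell$ then follow from Lemma~\ref{Lem31:d&elll} together with the explicit $(4,\ell)$-partitions built at the end, so the substance is the assertion $R_C=4$.

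For $R_C=4$ I would verify Definition~\ref{Def11:CoverRad}(ii): every $\Ub=(\pib,\ub_1,\ub_2,\ub_3,\ub_4)^{tr}\in\F_2^{\,r}$, with $\pib\in\F_2^{\,r_0}$ and $\ub_1,\dots,\ub_4\in\F_2^{\,m}$, is a sum of at most four columns of $\Hb_C$, casing on a decomposition of $\pib$ over $\Hb_0$. Since $C_0$ is a $(4,\ell_0)$-object, $\pib$ (even $\pib=\ozero$) is a sum of $s$ columns $\hb_{i_1},\dots,\hb_{i_s}$ of $\Hb_0$ lying in distinct subsets of $\Ps_0$ with $\ell_0\le s\le 4$, so the corresponding indicators $\beta_{i_1},\dots,\beta_{i_s}$ are pairwise distinct. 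Choosing from $\Ab(\hb_{i_u},\beta_{i_u})$ the column with node $\xi=x_u$ and summing over $u$ reproduces $\pib$ on the top $r_0$ rows and the vector $\bigl(\sum_u x_u,\,\sum_u\beta_{i_u}x_u,\,\sum_u\beta_{i_u}^2x_u,\,\sum_u\beta_{i_u}^3x_u\bigr)^{tr}$ on the bottom $4m$ rows --- a Vandermonde system in $x_1,\dots,x_s$ with distinct nodes, hence of full column rank $s$. When $s=4$ the square Vandermonde matrix is invertible over $\F_{2^m}$, so one solves $(x_1,\dots,x_4)$ to match $(\ub_1,\ub_2,\ub_3,\ub_4)$ exactly, using $4$ columns and nothing from $\D$. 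When $s<4$ one solves the first $s$ coordinate blocks, leaving a residual that is zero on those blocks and arbitrary on the remaining $4-s$ blocks; the $4-s$ Hamming blocks $\Wb_m$ sitting (in the right coordinate positions) inside $\D_1(4)$ or $\D_2(4)$ absorb it, since $\Wb_m$ is the parity-check matrix of a radius-$1$ Hamming code, so any $m$-vector is $\ozero$, a column of $\Wb_m$, or a sum of two columns of $\Wb_m$ --- and one checks the column total stays $\le 4$ in every subcase (this is where $\ell_0=2$, forcing $s\ge 2$, is exactly matched to the two $\Wb_m$ blocks of $\D_2(4)$, and $\ell_0=3$, forcing $s\ge 3$, to the single block of $\D_1(4)$). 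The ``pure $\D$'' possibility is subsumed: using only $\D$, the bottom $4m$ rows of $\D$ form a parity-check matrix of covering radius $4$, so $(\ub_1,\dots,\ub_4)^{tr}$ is a sum of at most four columns of $\D$.

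To obtain the partition bounds I would pull $\Ps_0$ back through the blocks --- $\Ab(\hb_i,\beta_i)$ and $\Ab(\hb_j,\beta_j)$ go into distinct classes precisely when $\hb_i,\hb_j$ lie in distinct classes of $\Ps_0$ --- and adjoin the columns of $\D$ as one further class, peeling one $\Wb_m$ block into its own class when needed so that $\ozero$ is a sum of $\le 4$ columns from distinct classes; this yields $p(\Hb_C,2)\le p(\Hb_0,2)+2$ for $\mathrm{QM}_1^4$ and $p(\Hb_C,3)\le p(\Hb_0,3)+1$ for $\mathrm{QM}_3^4$. For the bound $p(\Hb_C,3)\le 3p(\Hb_0,2)+2$, valid once $m\ge 3$, I would split each pulled-back class into three subclasses along a fixed $3$-colouring of the node set $\F_{2^m}$ chosen so that every $m$-vector is a sum of three nodes of pairwise distinct colours (possible for $m\ge 3$); replacing a single column of node $x_u$ in a block by three columns whose nodes sum to $x_u$ and lie in the three subclasses forces every representation up to length at least $3$.

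The step I expect to be the main obstacle is the bookkeeping when $s=2$ (for $\mathrm{QM}_1^4$): the two $\Ab$-blocks only span a $2$-dimensional family of bottom contributions against a $4m$-dimensional target, so the residual must be split across the two separate $\Wb_m$ blocks of $\D_2(4)$, which is why $\D_2(4)$ places them in the third and fourth coordinate blocks rather than stacked together --- verifying that this alignment makes every residual pattern absorbable with at most $2$ extra columns (hence at most $4$ in total) is the real content behind ``if $\D$ and $\Bs$ are carefully chosen.'' A close second is the $3$-colouring argument above for $p(\Hb_C,3)\le 3p(\Hb_0,2)+2$; both are finite, mechanical checks once the correct $\D$ and node colourings are fixed, and everything else reduces to invertibility of Vandermonde matrices over $\F_{2^m}$ and the covering property of the Hamming code.
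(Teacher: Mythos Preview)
The paper does not prove this theorem; it is quoted as a known result from \cite{Dav90PIT} and \cite[Theorem~5.1]{DDL-IEEE}. Your argument is correct and follows exactly the template the paper uses for its own new QM constructions (see the proofs of Theorems~\ref{th5:-3}, \ref{th5:r=18,28}, \ref{th7:ell0=0R=3}, \ref{th9:ell0=1}): case on the number $s$ of columns of $\Hb_0$ in a representation of $\pib$, solve the resulting Vandermonde system over $\F_{2^m}$ for the $\xi$-coordinates, and absorb the residual in the $\Wb_m$ blocks of~$\D$; the matching of $\ell_0$ to the number of $\Wb_m$ blocks in $\D$ is exactly as you describe.

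One harmless slip: your ``pure $\D$'' sentence is wrong as stated---the bottom $4m$ rows of $\D_2(4)$ (respectively $\D_1(4)$) have the first $2m$ (respectively $3m$) positions identically zero, so $\D$ alone does not cover $\F_2^{4m}$---but this case never arises, since $\ell_0\ge2$ (respectively $\ell_0\ge3$) forces $s\ge2$ (respectively $s\ge3$) even when $\pib=\ozero$, and your main case analysis already handles that.
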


\begin{theorem}\label{th8:KnownFamR=4}
\emph{\cite[Examples 6,\,9]{DavCovNewConstrCovCod}}
\begin{description}
  \item[(i)] There exists a $[90,70]_24,2$ code with a parity check matrix $\Hb$ such that $p(\Hb,2)\le 25$.
  \item[(ii)] Let the asymptotic density $\overline{\mu}(R)$ be as in \eqref{eq11:liminfdens}. There exists an infinite family of $[n,n-r]_24$ codes with growing $r=4t$,  and  the following  parameters:
\begin{align}
&r=4t,~ r=20\T{ and }r\ge40,\,n=2944\cdot2^{r/4-10}-2=2944\cdot2^{t-10}-2,
\label{eq8:knownfam4t}\db\\
&\overline{\mu}(4)\thickapprox2.84669;~\ell_2(4t,4)\le2944\cdot2^{t-10}-2.\notag
 \end{align}
\end{description}
\end{theorem}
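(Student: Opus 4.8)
The plan is to treat the two parts separately: part~(i) supplies the ``seed'' code, and part~(ii) spreads it over all codimensions $r=4t\ge40$ by a single application of Construction QM$_1^4$ of Theorem~\ref{th8:DDLR=4} with a varying parameter~$m$.

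For part~(i) I would exhibit the $[90,70]_24,2$ code together with a $20\times90$ parity check matrix $\Hb$ admitting a $(4,2)$-partition into at most $25$ classes. In practice such a seed is produced by a direct, computer-assisted construction --- possibly as a $q^m$-concatenation of a shorter $R=4$ code, possibly as an amalgamated direct sum, or as an ad hoc optimized code; the essential requirements are covering radius exactly~$4$ and the $(4,2)$-partition into $\le25$ classes, i.e.\ every column of $\F_2^{20}$ (the zero column included) is a sum of at least~$2$ and at most~$4$ columns of $\Hb$ lying in pairwise distinct classes. These properties are then checked directly. I expect this step --- producing a seed whose partition is this economical --- to be the main obstacle; everything downstream is formal bookkeeping.

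For part~(ii), take the seed of part~(i) as the starting code $C_0$ in Construction QM$_1^4$ of Theorem~\ref{th8:DDLR=4}, so $n_0=90$, $r_0=20$, $\ell_0=2$, $p(\Hb_0,2)\le25$. With the indicator set $\Bs\subseteq\F_{2^m}$ of size $\#\Bs=p(\Hb_0,2)$ and $\D=\D_2(4)$ as in \eqref{eq8:ell0=2inp}, the hypothesis $2^m\ge p(\Hb_0,2)=25$ of \eqref{eq8:ell0=2inp} holds precisely for $m\ge5$. Hence for each integer $m\ge5$, \eqref{eq8:ell0=2res} yields an $[n,n-r]_24,3$ code --- in particular an $[n,n-r]_24$ code --- with
\begin{align*}
&r=r_0+4m=20+4m=4t,\qquad t=m+5\ge10,\\
&n=2^{m}(n_0+2)-2=92\cdot 2^{m}-2=2944\cdot 2^{m-5}-2=2944\cdot 2^{t-10}-2.
\end{align*}
Together with the seed itself (the member $r=20$), this is exactly the announced family, and $\ell_2(4t,4)\le n=2944\cdot 2^{t-10}-2$ follows at once. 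I would stress that no iteration is required: a single pass of QM$_1^4$ over the one fixed seed, with $m$ running through $\{5,6,7,\dots\}$, already reaches every $r\equiv0\pmod4$ with $r\ge40$ --- the omitted values $r=24,28,32,36$ being precisely those where $2^m<25$ rules out the construction.

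Finally, for the asymptotic density I would substitute $n=2944\cdot 2^{t-10}-2\sim 23\cdot 2^{t-3}$ into \eqref{eq11:CovDensity} with $q=2$, $R=4$: since $\mu(n,4,C)=2^{-4t}\sum_{i=0}^{4}\binom{n}{i}$ and the term $\binom{n}{4}$ dominates as $t\to\infty$, one gets $\mu(n,4,C)\to\frac{1}{24}(23\cdot 2^{-3})^{4}=\frac{23^{4}}{24\cdot 8^{4}}\thickapprox 2.84669$, so that $\overline{\mu}(4)\thickapprox 2.84669$ by \eqref{eq11:liminfdens}.
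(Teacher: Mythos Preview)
The paper does not give its own proof of this theorem: it is stated in the ``known results'' Section~\ref{sec8:R=4known} and attributed to \cite[Examples~6,\,9]{DavCovNewConstrCovCod} without argument. So there is nothing in the present paper to compare against directly.

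That said, your proposal is sound. Part~(i) is indeed an externally supplied seed whose properties must be verified explicitly (and you rightly flag this as the only nontrivial step). For part~(ii), applying QM$_1^4$ to the $[90,70]_24,2$ seed with $m\ge5$ is exactly the right mechanism: the condition $2^m\ge p(\Hb_0,2)\le25$ forces $m\ge5$, and $n=2^m\cdot92-2=2944\cdot2^{t-10}-2$ with $r=4t=20+4m$ matches \eqref{eq8:knownfam4t}; the seed itself accounts for $r=20$. Your density computation is also correct. One minor remark: the paper's own later use of this seed (in the proof of Theorem~\ref{th9:newFamR=4}) applies QM$_1^4$ only once with $m=5$ and then switches to QM$_3^4$ iteratively, because for the \emph{improved} family it needs $n_0+1$ rather than $n_0+2$ in the length formula; but for the known family \eqref{eq8:knownfam4t} your single-pass use of QM$_1^4$ over all $m\ge5$ is the natural and correct route.
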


\begin{theorem}\label{th8:OKb code}\emph{\"{O}sterg{\aa}rd, Kaikkonen \cite[Table 2]{OstKaikUpBndBin1998}}
Let $C_{OK2}$ be the binary $[19,8]_2$ code with the parity check matrix
$\Hb_{OK2}=[\Ib_{11}\Mb_{OK2}]$, where $\Mb_{OK2}$ is the $11\times8$ binary matrix with the following columns in
hexadecimal notation:
\begin{align}\label{eq8:HOK}
&\Mb_{OK2}=[\mathrm{4EA, 771, 6, 86, 1CD, 3B4, 17E, 7AB}].
\end{align}
Then $C_{OK2}$ is a $[19,8]_24$ code of covering radius $4$.
\end{theorem}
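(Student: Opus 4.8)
The plan is to verify the two inequalities $R(C_{OK2})\ge 4$ and $R(C_{OK2})\le 4$ separately, via the syndrome characterization of the covering radius in Definition~\ref{Def11:CoverRad}(ii). Here $C_{OK2}$ has length $n=19$, dimension $k=8$, hence redundancy $r=n-k=11$; write $\Hb_{OK2}=[\Ib_{11}\,\Mb_{OK2}]=[\hb_1,\ldots,\hb_{19}]$ with $\hb_j\in\F_2^{11}$, where $\hb_1,\ldots,\hb_{11}$ are the columns of $\Ib_{11}$ and $\hb_{12},\ldots,\hb_{19}$ are the eight columns listed in~\eqref{eq8:HOK}.

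For the lower bound I would use the elementary covering (sphere-covering) bound $\mu\ge1$ built into Definition~\ref{Def11:CoverDensReg}. If $C_{OK2}$ had covering radius $3$, then by~\eqref{eq11:CovDensity} its covering density would be
\[
\mu(19,3,C_{OK2})=\frac{1}{2^{11}}\,V_{3,19,2}=\frac{1}{2048}\left(1+19+\binom{19}{2}+\binom{19}{3}\right)=\frac{1160}{2048}<1,
\]
which is impossible. Hence $R(C_{OK2})\ge 4$. One checks in the same way that $V_{4,19,2}=1160+\binom{19}{4}=5036>2048$, so covering radius exactly $4$ is not ruled out by counting.

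For the upper bound I would perform the finite verification directly: enumerate every subset $S\subseteq\{1,\ldots,19\}$ with $|S|\le 4$, compute the syndrome $\sum_{j\in S}\hb_j\in\F_2^{11}$, and confirm that the set of syndromes so obtained is the whole space $\F_2^{11}$ (equivalently, every column of $\F_2^{11}$, including $\zb_{11}$, is a sum of at most four columns of $\Hb_{OK2}$). There are at most $V_{4,19,2}=5036$ such subsets against only $2^{11}=2048$ target vectors, so this is a small exhaustive search. By Definition~\ref{Def11:CoverRad}(ii) it gives $R(C_{OK2})\le 4$, and combined with the lower bound, $R(C_{OK2})=4$.

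The main obstacle is that $C_{OK2}$ was obtained by a computer search in~\cite{OstKaikUpBndBin1998} and carries no evident algebraic structure, so the upper-bound step genuinely reduces to the brute-force enumeration above rather than to a closed-form argument; in practice one simply cites~\cite[Table~2]{OstKaikUpBndBin1998}. A side remark useful for the $q^m$-concatenating constructions elsewhere in the paper: the columns of $\Hb_{OK2}$ are pairwise distinct and nonzero, while the hexadecimal column $6$ has weight $2$ and is therefore a sum of two columns of $\Ib_{11}$; so $d(C_{OK2})=3\le 4=R$, and Lemma~\ref{Lem31:d&elll} then shows that $\Hb_{OK2}$ admits a $(4,\ell)$-partition with $\ell\ge 1$ (finding a small such partition is again a finite computation, not needed for the statement itself).
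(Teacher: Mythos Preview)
Your proposal is correct. The paper does not give its own proof of this theorem at all: it is stated in Section~\ref{sec8:R=4known} as a known result attributed to \"Osterg{\aa}rd and Kaikkonen \cite[Table~2]{OstKaikUpBndBin1998}, with no argument beyond the citation. Your verification via the sphere-covering inequality for $R\ge4$ and a finite syndrome enumeration for $R\le4$ is exactly the kind of check one would run to confirm the cited computer-search result, and your arithmetic ($V_{3,19,2}=1160$, $V_{4,19,2}=5036$) is correct. Your side remark on $d(C_{OK2})=3$ via the weight-$2$ column $\mathrm{6}$ is also on target and in fact anticipates what the paper uses later: in the proof of Theorem~\ref{th9:r=31,n=690} the authors observe $\hb_9+\hb_{10}+\hb_{14}=\zb_{11}$ to conclude that $C_{OK2}$ is a $[19,8,3]_24,1$ code.
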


\section{New results on binary linear codes of covering radius 4}\label{sec9:R=4new}
\begin{theorem}\label{th9:ell0=1}
$\mathbf{Construction~QM_4^4}$. In Construction \emph{QM} of Section~$\ref{subsec32:QM}$, let the starting code $C_0$  be an $[n_0,n_0-r_0]_24,1$ code with a parity check matrix $\Hb_0$ of the form \eqref{eq32:H0} admitting a $(4,1)$-partition $\Ps_0$
into $p(\Hb_0,1)$ subsets. We define a new $[n,n-r]_2R_C$ code $C$ by a parity check
$r\times n$ matrix $\Hb_C$ of the form \eqref{eq32:QM-H}--\eqref{eq32:Hcb2m}, where the indicator set $\Bs$, parameter $m$, and submatrix $\D$ are as follows:
\begin{equation}
\emph{QM}_4^4:~\Bs\subseteq\F_{2^m}^*,~2^m-1\ge p(\Hb_0,1),~m\T{ is odd},~
 \D=\D_5.\label{eq9:ell0=1inpR=4}
\end{equation}
Then the new code $C$ is an $[n, n - r]_24$ code of covering radius $4$ with the parameters:
\begin{equation}
\emph{QM}_4^4:~n=2^m(n_0+1)+n_{2m}-1,~r = r_0 + 4m.\label{eq9:ell0=1resR=4}
\end{equation}
\end{theorem}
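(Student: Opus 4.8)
The plan is to imitate the proof of Theorem~\ref{th7:ell0=0R=3} (Construction QM$_5^3$), raising the covering radius from $3$ to $4$. The equalities $n=2^m(n_0+1)+n_{2m}-1$ and $r=r_0+4m$ are immediate from the shape \eqref{eq32:QM-H}--\eqref{eq32:Hcb2m} of $\Hb_C$ with $\D=\D_5$, since $\D_5$ has $n_{2m}+(2^m-1)$ columns; the whole content is therefore the assertion $R_C=4$. Write an arbitrary column of $\F_2^{\,r}$ as $\Ub=(\pib,\ub_1,\ub_2,\ub_3,\ub_4)^{tr}$, $\pib\in\F_2^{\,r_0}$, $\ub_1,\ub_2,\ub_3,\ub_4\in\F_2^{\,m}$; in $\Hb_C$ the blocks $\ub_1,\dots,\ub_4$ occupy the rows carrying $\xi,\beta_j\xi,\beta_j^2\xi,\beta_j^3\xi$ in $\Ab(\hb_j,\beta_j)$, see \eqref{eq32:Aj}, while in $\D_5$ the $\pib$- and $\ub_1$-rows are zero, the pair $(\ub_2,\ub_3)$ carries $\Hcb_{2m}$, and $\ub_4$ carries $\Wb_m$. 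By Definition~\ref{Def11:CoverRad}(ii) it suffices to write every $\Ub$ as a sum of at most $4$ columns of $\Hb_C$. Since $C_0$ is a $(4,1)$-object, $\pib$ is a sum of $s$ columns of $\Hb_0$ lying in distinct subsets of $\Ps_0$ for some $s\in\{1,2,3,4\}$, and $s\ge2$ if $\pib=\zb_{r_0}$ (the $\hb_j$ are nonzero). Throughout I shall use that every vector of $\F_2^{\,2m}$ is a sum of at most two columns of $\Hcb_{2m}$ (as $R(\Vc_{2m})=2$) and that every nonzero vector of $\F_2^{\,m}$ is a column of $\Wb_m$.

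Suppose first $\pib\ne\zb_{r_0}$. For $s=4$, say $\pib=\hb_i+\hb_j+\hb_k+\hb_l$ with the (nonzero) indicators pairwise distinct, the attached $4\times4$ Vandermonde system over $\F_{2^m}$ is nonsingular, and its solution exhibits $\Ub$ as a sum of one column from each of $\Ab(\hb_i,\beta_i),\dots,\Ab(\hb_l,\beta_l)$. For $s=3$, the $3\times3$ Vandermonde system makes three $\Ab$-columns reproduce $(\pib,\ub_1,\ub_2,\ub_3)$, and the residual in the $\ub_4$-block, if nonzero, is absorbed by one column of $\Wb_m$. For $s=2$, say $\pib=\hb_i+\hb_j$, I use the two $\Ab$-columns to hit $\ub_1$ and $\ub_4$ through $\xb+\yb=\ub_1$, $\beta_i^3\xb+\beta_j^3\yb=\ub_4$; this $2\times2$ system is invertible precisely because $m$ is odd, so $\gcd(3,2^m-1)=1$, hence $x\mapsto x^3$ is a bijection of $\F_{2^m}$ and $\beta_i^3\ne\beta_j^3$; the resulting discrepancy in $(\ub_2,\ub_3)$ then costs at most two further columns, those of $\Hcb_{2m}$, giving at most $4$ in all. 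For $s=1$, say $\pib=\hb_j$, I take the column of $\Ab(\hb_j,\beta_j)$ with $\xi=\ub_1$, correct $(\ub_2,\ub_3)$ with at most two columns of $\Hcb_{2m}$, and correct $\ub_4$ with at most one column of $\Wb_m$, again at most $4$ columns.

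It remains to handle $\pib=\zb_{r_0}$. If $\ub_1=\zb_m$, then $(\ub_2,\ub_3,\ub_4)$ is a sum of at most three columns of the last $3m$ rows of $\D_5$, which form the block-diagonal parity-check matrix of the direct sum of $\Vc_{2m}$ and the $[2^m-1,2^m-1-m,3]_21$ Hamming code, a code of covering radius $2+1=3$; hence $\Ub$ is a sum of at most three columns of $\D_5$. If $\ub_1\ne\zb_m$, fix a presentation $\zb_{r_0}=\hb_{i_1}+\dots+\hb_{i_s}$ with $s\in\{2,3,4\}$ columns of $\Hb_0$ in distinct subsets, and repeat verbatim the $s$-column argument of the previous paragraph; in the subcase $s=2$ the solution of $\xb+\yb=\ub_1\ne\zb_m$ satisfies $\xb\ne\yb$, so the two $\Ab$-columns are genuinely distinct, and the oddness of $m$ is again used. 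All cases being exhausted, every column of $\F_2^{\,r}$ is a sum of at most $4$ columns of $\Hb_C$; the reverse inequality $R_C\ge4$ is routine (a weight-$4$ syndrome of $C_0$, padded by zeros in the lower $4m$ coordinates, still needs four columns of $\Hb_C$, since the columns of $\D_5$ have zero top part). Hence $R_C=4$.

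I expect the real obstacle to be the bookkeeping in the tight cases $s\le2$ and $\pib=\zb_{r_0}$ with $\ub_1\ne\zb_m$, where the budget of four columns is exactly spent; there the combination of the $(4,1)$-object hypothesis on $C_0$ (a $2$-, $3$- or $4$-column presentation of the zero syndrome), the oddness of $m$ (invertibility of the $2\times2$ system), the covering radius $2$ of $\Vc_{2m}$, and $\Wb_m$ being a full Hamming matrix is exactly what makes the count close. The exclusion of the symbol $\ast$ from $\Bs$ (forced by $\Bs\subseteq\F_{2^m}^*$) is also essential in the case $\pib=\hb_j$, since a column of shape $(\hb_j,\zb_{3m},\xi)^{tr}$ cannot alter the $\ub_1$-block; the inequality $2^m-1\ge p(\Hb_0,1)$ serves only to assign distinct nonzero indicators to distinct subsets of $\Ps_0$.
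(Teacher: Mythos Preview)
Your proof is correct and follows essentially the same route as the paper's: the same Vandermonde systems for $s=3,4$, the same $2\times2$ system on the $\ub_1,\ub_4$ rows for $s=2$ (where the oddness of $m$ gives $\beta_i^3\ne\beta_j^3$), and the same direct use of $\D_5$ for $s=1$. You are slightly more explicit than the paper in separating out the case $\pib=\zb_{r_0}$ and in justifying $R_C\ge4$, but these are refinements of the same argument, not a different approach.
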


\begin{proof}
 The values of $n$ and $r$ follow directly from construction.

 To prove $R_C=4$, we consider the representation as a sum of $\le4$ columns of $\Hb_C$ for an arbitrary column $\Ub=(\pib,\ub_1,\ub_2,\ub_3,\ub_4)^{tr}\in\F_2^{\,r}$, where $\pib\in\F_2^{\,r_0}$, $\ub_i\in\F_2^{\,m}$.
 \begin{description}
  \item[(1)] $\pib=\sum_{j=1}^v\hb_{i_j}$, $v=3,4$; $\hb_{i_j}$ belong to distinct subsets of $\Ps_0$; all $\beta_{i_j}$ are distinct.

  We find $\xb_j$ from the linear system $\sum_{j=1}^v\beta_{i_j}^{\delta-1}\xb_j=\ub_\delta$, $\delta=1,2,\ldots,v$.\\
  Now $\Ub=\sum_{j=1}^v (\hb_{i_j},\xb_j,\beta_{i_j} \xb_j,\beta_{i_j}^2 \xb_j,\beta_{i_j}^3 \xb_j)^{tr}+(4-v)(\zb_{r_0+3m},\ub_4+\sum_{j=1}^v \beta_{i_j}^3 \xb_j)^{tr}$.

  \item[(2)] $\pib=\sum_{j=1}^2\hb_{i_j}$; $\hb_{i_j}$ belong to distinct subsets of $\Ps_0$; $\beta_{i_1}\ne\beta_{i_2}$,
  $\beta_{i_1}^3\ne\beta_{i_2}^3$ as $m$ is odd.

  We find $\xb_j$ from the linear system $\sum_{j=1}^2\beta_{i_j}^{\delta-1}\xb_j=\ub_\delta$, $\delta=1,4$.\\
  Now $\Ub=\sum_{j=1}^2 (\hb_{i_j},\xb_j,\beta_{i_j} \xb_j,\beta_{i_j}^2 \xb_j,\beta_{i_j}^3 \xb_j)^{tr}+\boldsymbol{S}$, where $\boldsymbol{S}$ is a sum of $\le2$ columns of $\Hcb_{2m}$, see \eqref{eq32:varD2}.

  \item[(3)] $\pib=\hb_i$.

 $\Ub=(\hb_i,\ub_1,\beta_i\ub_1,\beta_i^2 \ub_1,\beta_i^3 \ub_1)^{tr}+\boldsymbol{T}$, where $\boldsymbol{T}$ is a sum of $\le3$ columns of $\D$.
\end{description}
Thus,  any column $\Ub \in\F_2^{\,r}$  can be represented as a sum of at most 4 columns of $\Hb_C$. This means that the covering radius of the new code $C$ is $R_C=4$.
\end{proof}

\begin{theorem}\label{th9:r=31,n=690}
There is a new $[690,659,3]_24$ code that provides the new upper bound on the length function $\ell_2(31,4)\le690$ and decreases the known bound $701$ of \emph{\cite{DDL-IEEE}} by $11$.
\end{theorem}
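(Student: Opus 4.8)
The plan is to obtain the code as an instance of Construction $\mathrm{QM}_4^4$ (Theorem~\ref{th9:ell0=1}), with the starting code of codimension $11$ and the auxiliary $[n_{2m},n_{2m}-2m]_22$ code chosen so that the arithmetic lands exactly on $n=690$, $r=31$. Since $r=r_0+4m$ and $m$ must be odd, the natural choice is $m=5$, $r_0=11$; a codimension-$11$ covering code of radius $4$ is precisely $C_{OK2}$ from Theorem~\ref{th8:OKb code}, with $n_0=19$.

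First I would check that $C_{OK2}$ is a $(4,1)$-object with a small $(4,1)$-partition. The column ``$6$'' of $\Mb_{OK2}$ has Hamming weight $2$, hence equals the sum of two columns of $\Ib_{11}$; together with the fact that all columns of $\Hb_{OK2}$ are distinct and nonzero, this gives minimum distance $d(C_{OK2})=3\le 4$, so by Lemma~\ref{Lem31:d&elll} the code is a $(4,1)$-object. I would then simply use the trivial $(4,1)$-partition of $\Hb_{OK2}$, for which $p(\Hb_0,1)=n_0=19$. Since $2^m-1=31\ge 19=p(\Hb_0,1)$ and $m=5$ is odd, hypothesis~\eqref{eq9:ell0=1inpR=4} of Construction $\mathrm{QM}_4^4$ is satisfied; one picks any indicator set $\Bs\subseteq\F_{2^5}^{*}$ with $\#\Bs=19$ and the submatrix $\D=\D_5$ of~\eqref{eq32:varD2}, taking $\Vc_{2m}=\Vc_{10}$ to be the $[51,41]_22$ code $C_{KR}$ of Theorems~\ref{th4:KR code}, \ref{th5:PartitionKR} (the shortest known $[n,n-10]_22$ code, see Table~\ref{tab5:LengFun r32R=2}), so that $n_{2m}=51$.

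Now~\eqref{eq9:ell0=1resR=4} gives a code $C$ with $r=r_0+4m=11+20=31$ and $n=2^m(n_0+1)+n_{2m}-1=32\cdot 20+51-1=690$, and $R_C=4$ by Theorem~\ref{th9:ell0=1}. For the minimum distance: three linearly dependent columns of $\Hb_{OK2}$ that lie in distinct subsets of the partition lift (via the coordinate $\xi_1=0$) to three columns of $\Hb_C$ summing to zero, so $d_C\le 3$; and $d_C\ge 3$ follows exactly as in the earlier QM-theorems from the structure of $\D_5$ together with $d(C_{KR})=d(C_{OK2})=3$ and the fact that $\Hb_C$ has no zero or repeated column. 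Hence $C$ is a $[690,659,3]_24$ code, which yields $\ell_2(31,4)\le 690$; comparing with the previously known bound $\ell_2(31,4)\le 701$ of~\cite{DDL-IEEE} gives $\Delta(31,4)=701-690=11$.

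There is no serious analytic obstacle: Theorem~\ref{th9:ell0=1} already does all the covering-radius work, and the proof reduces to the bookkeeping above plus the (routine) verification that $C_{OK2}$ is a $(4,1)$-object. The only point needing a little care is confirming $d_C=3$ rather than $1$ or $2$, i.e. that the concatenation does not create a zero column or two equal columns --- which is immediate from the block form of $\Hb_C$ since each of $\Hb_{OK2}$, $\Hb_{KR}$, $\Wb_5$ has pairwise distinct nonzero columns.
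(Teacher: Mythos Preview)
Your proposal is correct and follows essentially the same route as the paper: the paper also applies Construction $\mathrm{QM}_4^4$ with starting code $C_{OK2}$ (using the trivial $(4,1)$-partition, justified by the same weight-$2$ column ``$6$'' giving $d=3\le R$), odd $m=5$, and $\Vc_{10}=C_{KR}$, yielding $n=2^5\cdot 20+51-1=690$ and $r=11+20=31$. Your treatment of $d_C=3$ is actually more explicit than the paper's, which only records $d=3$ for $C_{OK2}$ and does not spell out why it survives the concatenation.
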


\begin{proof}
 We use Construction QM$_4^4$ of Theorem \ref{th9:ell0=1}. As the code $C_0$ we take the $[19,8]_24$ code $C_{OK2}$ of Theorem \ref{th8:OKb code} with the parity check matrix $\Hb_{OK2}=[\hb_1\hb_2\ldots\hb_{19}]=[\Ib_{11}\Mb_{OK2}]$, where $\Mb_{OK2}$ is given by \eqref{eq8:HOK}. We have $\hb_9+\hb_{10}+\hb_{14}=(000\, 0000\, 0100)^{tr}+(000\, 0000\, 0010)^{tr}+(000\, 0000\, 0110)^{tr}=\zb_{11}$. Thus, $C_{OK2}$ is a $[19,8,3]_24,1$ code, that implies $n_0=19$, $r_0=11$, $p(\Hb_{OK2},1)\le19$. This allows us to take odd $m=5$. As the
 $[n_{2m},n_{2m}-2m]_22$ code $\Vc_{2m}$, see \eqref{eq32:Hcb2m}, we take the $[51,41,3]_22$ code \cite{KaikRoseADSlike2003},
 see Theorems \ref{th4:KR code}, \ref{th5:PartitionKR}. By \eqref{eq9:ell0=1resR=4}, we obtain a new $[690,690-31]_24$ code.
\end{proof}

\begin{theorem}\label{th9:newFamR=4}
Let the asymptotic covering density $\overline{\mu}(R)$ be as in \eqref{eq11:liminfdens}. There exists a new infinite family of binary linear  $[n,n-r,3]_24,3$ codes with a parity check matrix $\Hb$ of the form \eqref{eq32:QM-H}--\eqref{eq32:Wbm}, \eqref{eq8:ell0=3inp}, with growing $r=4t$ and  the parameters as in \eqref{eq9:newfam4t}:
\begin{align}
&r=4t,~ r=40\T{ and }r\ge68,\,n=2943\cdot2^{r/4-10}-1=2943\cdot2^{t-10}-1,
\label{eq9:newfam4t}\db\\
&\overline{\mu}(4)\thickapprox2.84282;~\ell_2(4t,4)\le2943\cdot2^{t-10}-1,~\Delta(r,4)=2^{t-10}-1.\notag
 \end{align}
\end{theorem}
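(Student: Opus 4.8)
The plan is to obtain the family by a single application of Construction QM$_3^4$ (Theorem~\ref{th8:DDLR=4}, \eqref{eq8:ell0=3inp}, \eqref{eq8:ell0=3res}) to one fixed seed code of redundancy $40$, letting the free parameter $m$ run over all integers $m\ge7$.

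\emph{Building the seed.} I would start from the $[90,70]_24,2$ code of Theorem~\ref{th8:KnownFamR=4}(i), whose parity check matrix admits a $(4,2)$-partition with $p(\Hb,2)\le25$, and apply Construction QM$_1^4$ (Theorem~\ref{th8:DDLR=4}, \eqref{eq8:ell0=2inp}, \eqref{eq8:ell0=2res}) with $m=5$. Here $2^m=32\ge p(\Hb,2)$, so by \eqref{eq8:ell0=2res} the resulting code $C_{40}$ is an $[2942,2902,3]_24,3$ code with $n_0=2^5(90+2)-2=2942$, $r_0=20+20=40$, minimum distance $3$ (following from the submatrix $\D_2(4)$, as in the distance arguments of Theorems~\ref{th5:-3}, \ref{th5:r=18,28}), and $p(\Hb_0,3)\le 3p(\Hb,2)+2\le 77$. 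This $C_{40}$ is simultaneously the seed and the $r=40$ member of the family; since its length $2942$ coincides with the $t=10$ value of the known family of Theorem~\ref{th8:KnownFamR=4}(ii), one indeed gets $\Delta(40,4)=2^{0}-1=0$.

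\emph{The iterated step.} For each integer $m\ge7$ I would apply Construction QM$_3^4$ to $C_{40}$: since $p(\Hb_0,3)\le77\le128=2^7\le2^m$, condition \eqref{eq8:ell0=3inp} holds with $\Bs=\F_{2^m}$ and $\D=\D_1(4)$, and by \eqref{eq8:ell0=3res} the new code $C$ is an $[n,n-r]_24,3$ code with $r=40+4m=4t$, $t=10+m\ge17$ (hence $r\ge68$), and $n=2^m(2942+1)-1=2943\cdot2^{m}-1=2943\cdot2^{t-10}-1$; its minimum distance is again $3$ by the $\D$-argument. Letting $m$ range over $\{7,8,9,\dots\}$ yields all $r=4t$ with $t\ge17$, and together with $C_{40}$ this realizes exactly the values $r=40$ and $r\ge68$. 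The bound $\ell_2(4t,4)\le n$ is then immediate, and comparison with the known bound $2944\cdot2^{t-10}-2$ of Theorem~\ref{th8:KnownFamR=4}(ii) gives $\Delta(4t,4)=(2944\cdot2^{t-10}-2)-(2943\cdot2^{t-10}-1)=2^{t-10}-1$. For the asymptotic density, \eqref{eq11:CovDensity} gives $\mu(n,4,C)=2^{-4t}\sum_{i=0}^{4}\binom{n}{i}$; since $\binom{n}{4}/2^{4t}\to(2943/2^{10})^{4}/24$ and the lower-order terms vanish, $\overline{\mu}(4)=2943^{4}/(24\cdot2^{40})\thickapprox2.84282$, smaller than the known value $\thickapprox2.84669$.

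\emph{Expected main obstacle.} The one genuinely delicate point is the estimate $p(\Hb_0,3)\le77$ for the seed: it is this bound, together with $2^6=64<77$, that forces $m\ge7$ and thereby opens the gap $44\le r\le64$, in which only the weaker sporadic codes of Theorem~\ref{th2:SporasNew} are available. Fortunately $p(\Hb_0,3)\le 3p(\Hb,2)+2$ is exactly the guarantee \eqref{eq8:ell0=2res} of QM$_1^4$, so no new combinatorial argument is needed; the remainder is the bookkeeping of chaining two established versions of the $q^m$-concatenating construction, plus the routine density estimate.
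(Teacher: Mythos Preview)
Your proposal is correct and follows essentially the same approach as the paper: build the $[2942,2902]_24,3$ seed via QM$_1^4$ with $m=5$ from the $[90,70]_24,2$ code, then apply QM$_3^4$ for all $m\ge7$. You have in fact supplied a bit more detail than the paper's proof (the explicit density computation and the observation that $\Delta(40,4)=0$), but the underlying construction is identical.
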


\begin{proof}
  In Construction QM$_1^4$ of Theorem \ref{th8:DDLR=4}, we take as the starting code $C_0$ the\\
   $[90,70]_24,2$ code of Theorem \ref{th8:KnownFamR=4}(i) with $n_0=90$, $r_0=20$, $\ell_0=2$, and a parity check matrix $\Hb_0$ such that $p(\Hb_0,2)\le 25$. By \eqref{eq8:ell0=2inp}, we can take $m=5$. By \eqref{eq8:ell0=2res}, we obtain a $[2942,2902]_24,3$ code $C$ with $p(\Hb_C,3)\le 77$. We take this code as $C_0$ for Construction QM$_3^4$ of Theorem \ref{th8:DDLR=4} and put $m\ge7$ according to \eqref{eq8:ell0=3inp}. As a result, we obtain the needed new infinite code family. The value of $\Delta(r,4)$ follows from \eqref{eq8:knownfam4t} and \eqref{eq9:newfam4t}.
\end{proof}

\begin{theorem}\label{th9:r=48..64}
  For $r\ge48$, there are new $[n,n-r,3]_24,\ell$ codes with the parameters:
  \begin{equation}
  R=4,~r=4t\ge48,~n=2944\cdot2^{r/4-10}-3,~\ell\ge2;~\ell_2(r,4)\le n, ~\Delta(r,4)=1.\label{eq9:-3a}
  \end{equation}
 \end{theorem}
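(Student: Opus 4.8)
The plan is to derive these codes by a single application of Construction $\mathrm{QM}_2^4$ of Theorem~\ref{th8:DDLR=4} (input conditions \eqref{eq8:ell0=2inp2}, output \eqref{eq8:ell0=2res2}), taking as the starting code $C_0$ the $[90,70]_24,2$ code of Theorem~\ref{th8:KnownFamR=4}(i), together with the submatrix $\D$ and indicator set $\Bs$ prescribed by \eqref{eq8:ell0=2inp2}. Thus $n_0=90$, $r_0=20$, $\ell_0=2$, and $\Hb_0$ is a parity check matrix of $C_0$ with $p(\Hb_0,2)\le 25$.

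First I would verify the precondition of \eqref{eq8:ell0=2inp2} for the relevant $m$. With $c=\lfloor(2^m+2)/3\rfloor$, the requirement $c+1\ge p(\Hb_0,2)$ is guaranteed once $\lfloor(2^m+2)/3\rfloor\ge 24$, that is, once $2^m\ge 70$, i.e. $m\ge 7$. Hence for every $m\ge 7$ Construction $\mathrm{QM}_2^4$ applies and, by \eqref{eq8:ell0=2res2}, yields an $[n,n-r]_24,\ell$ code $C$ with $\ell\ge 2$, covering radius $4$, redundancy $r=r_0+4m=20+4m=4t$ where $t=m+5\ge 12$ (so $r\ge 48$), and length $n=2^m(n_0+2)-3=92\cdot 2^m-3$. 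Since $92=2944/2^5$ and $m-5=r/4-10$, this rewrites as $n=2944\cdot 2^{r/4-10}-3$, as claimed; conversely every $r=4t\ge 48$ arises from some $m\ge 7$.

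It remains to check the minimum distance and the decrease. The minimum distance $d_C=3$ follows from the form of the submatrix $\D$ together with $d(C_0)\ge 3$: no column of $\Hb_C$ is zero and no two of its columns coincide, while a triple of linearly dependent columns is present (cf. the proofs of Theorems~\ref{th5:-3}, \ref{th9:ell0=1}, \ref{th9:newFamR=4}). Since $C$ is an $[n,n-r]_24$ code we get $\ell_2(r,4)\le n$, and comparing $n=2944\cdot 2^{r/4-10}-3$ with the previously best bound $\ell_2(4t,4)\le 2944\cdot 2^{r/4-10}-2$ of Theorem~\ref{th8:KnownFamR=4}(ii), equation \eqref{eq8:knownfam4t}, gives $\Delta(r,4)=1$. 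I expect no genuine obstacle: the statement is a bookkeeping corollary of an already-established $q^m$-concatenating construction applied to the known $[90,70]_24,2$ code, and the only delicate points are pinning down the exact threshold $m\ge 7$ from $c+1\ge p(\Hb_0,2)$ and confirming that the resulting code has minimum distance exactly $3$ rather than merely $d\ge 3$.
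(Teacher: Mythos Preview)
Your proof is correct and follows essentially the same approach as the paper: apply Construction $\mathrm{QM}_2^4$ with the $[90,70]_24,2$ starting code of Theorem~\ref{th8:KnownFamR=4}(i), observe that $p(\Hb_0,2)\le 25$ forces $m\ge 7$, and read off the parameters and $\Delta(r,4)=1$ from \eqref{eq8:ell0=2res2} and \eqref{eq8:knownfam4t}. You actually supply more detail than the paper (the explicit threshold computation, the length rewriting, and the minimum-distance discussion), but the argument is the same.
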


\begin{proof}
 In Construction QM$_2^4$ of Theorem \ref{th8:DDLR=4}, we take as the starting code $C_0$ the\\ $[90,70]_24,2$ code of Theorem \ref{th8:KnownFamR=4}(i) with $n_0=90$, $r_0=20$, $\ell_0=2$, and a parity check matrix $\Hb_0$ such that $p(\Hb_0,2)\le 25$. By \eqref{eq8:ell0=2inp2}, we can take $m\ge7$.  As a result, we obtain the needed new codes. The value of $\Delta(r,4)$ follows from \eqref{eq8:knownfam4t} and \eqref{eq9:-3a}.
\end{proof}

\section{Conclusions and open problems}\label{sec10:conclus} For binary covering codes, we have improved the results of \cite{DDL-ACCT2-1990,DDL-ACCT3-1992,DDL-IEEE} and \cite{DavCovNewConstrCovCod} obtained in 1990--1994 (more than 30 years ago) and 2001 (25 years ago) that were not surpassed by anybody since then.

Using versions of $q^m$-concatenating constructions, a part of which is proposed in this paper, we obtained new infinite families of binary linear $[n,n-r]_2R$ codes of length $n$, codimension (redundancy) $r$ and covering radius $R$\/ for $R=2,r=2t$, and $R=3,r=3t-1$, and $R=4,r=4t$, where $t$ is growing. In the construction of the new infinite families, an important role play new partitions of the column sets of parity check matrices of some codes (in particular, the $[51,41]_22$ code \cite{KaikRoseADSlike2003}) into subsets with special properties. The asymptotic covering densities provided by the codes of the new families  are smaller (i.e. better) than the known ones. The lengths of the new codes give new constructive upper bounds on the length function $\ell_2(r,R)$ that is the minimum possible length of binary linear code of codimension  r and covering radius R. These bounds are smaller (i.e. better) than the known ones.

The new infinite code families are obtained in Theorems \ref{th5:r=18,28}, \ref{th5:R=2NewInfFam}, \ref{th7:r=26,38,41}, \ref{th7:newr=3t-1a}, \ref{th9:newFamR=4}; their parameters are collected in Theorem \ref{th2:InfFamNew}.
Also, in Theorems \ref{th5:-3r=22,24,26}, \ref{th7:303R=3}, \ref{th7:r=26,38,41}, \ref{th9:r=31,n=690}, \ref{th9:r=48..64} we obtained  sporadic new covering codes, whose parameters are collected in  Theorem \ref{th2:SporasNew}. The new versions of the $q^m$-concatenating constructions are given in Theorems \ref{th5:-3}, \ref{th5:r=18,28}, \ref{th5:n0Psi}, \ref{th7:ell0=0R=3}, \ref{th9:ell0=1}.

The approaches and methods of this paper can be used to obtain new results for codes of covering radius $R\ge 5$. As far as it is known to the authors, in the literature, only sporadic examples of covering codes with $R\ge 5$ are given  \cite{CHLL-bookCovCod,DavCovNewConstrCovCod,DDL-IEEE,KaikRoseADSlike2003,OstKaikUpBndBin1998} whereas infinite families are not presented.

\textbf{Open problems.} $\bullet$ To obtain new infinite families of binary linear $[n,n-r]_2R$ codes improving the known results of \cite{DDL-ACCT2-1990,DDL-ACCT3-1992,DDL-IEEE,DavCovNewConstrCovCod,GabDavTombR=2} for the parameters: $R=2,r=2t-1$; $R=3,r=3t-2, r=3t$; and $R=4,r=4t-3,r=4t-2,r=4t-1$,  where  $t$ is growing.

$\bullet$ To obtain infinite families of binary linear $[n,n-r]_2R$ codes of covering radius $R\ge 5$ with relatively good parameters.

$\bullet$ To improve sporadic known binary linear covering codes that are not included in infinite families.

\section*{Acknowledgments} The research of S. Marcugini and F. Pambianco was supported in part by the Italian
National Group for Algebraic and Geometric Structures and their Applications (GNSAGA -
INDAM) (Contract No. U-UFMBAZ-2019-000160, 11.02.2019) and by University of Perugia
(Project No. 98751: Strutture Geometriche, Combinatoria e loro Applicazioni, Base Research
Fund 2017--2019; Fighting Cybercrime with OSINT, Research Fund 2021). This work was partially funded by the SERICS project (PE00000014) under the MUR National Recovery and Resilience Plan funded by the European Union-NextGenerationEU.

\end{document}